\def\SM#1{\color{red}#1 \color{black}}
\def\SMM#1{\color{magenta}#1 \color{black}}
\def\R{\mathbb{R}}
\def\e{\varepsilon}
\def\z{{\bar{x}}}
\def\z{{\bar{z}}}
\DeclareMathOperator{\supp}{supp}
\def\d{\,\mathrm{d}}
\def\p{\partial}
\newcommand{\f}{\frac}
\def\:{\colon}
\newtheorem{thm}{Theorem}[section]
\newtheorem{cor}[thm]{Corollary}
\newtheorem{lem}[thm]{Lemma}
\newtheorem{prp}[thm]{Proposition}
\theoremstyle{definition}
\theoremstyle{remark}
\newtheorem{rem}[thm]{Remark}
\theoremstyle{example}
\def\thetitle{ A Hamilton-Jacobi approach for the evolutionary dynamics of a model with gene transfer: characterizing monomorphic dynamics for non-concave fitness functions}
\def\theauthor{Alejandro Gárriz and Sepideh Mirrahimi}
\title{\thetitle}
\author{\theauthor}
\begin{document}
	
	\maketitle
	
	\begin{abstract}
		We study the asymptotic behaviour of an integro-differential equation describing the evolutionary adaptation of a population structured by a phenotypic trait. The model takes into account mutation, selection, horizontal gene transfer and competition.  Previous works, based on the numerical studies or theoretical study of the corresponding stationary problem, have shown that the dynamics of the solutions are rich and we may expect several qualitative outcomes. In this article, we characterize the dynamics of the solution in two regimes: 1) a situation where the solution concentrates around a dominant trait, evolving gradually to a trait determined by a balance between selection and horizontal gene transfer; 2) a situation where the solution concentrates around a dominant trait which  evolves gradually to a maladapted trait  such that the population becomes extinct (a situation known as the evolutionary suicide).\\
		Our analysis is based on an approach involving Hamilton-Jacobi equations with constraint. Previously, the solutions to such equations were characterized for globally concave growth rates. Here,  we  extend this approach to situations where the growth rate is not globally concave.	
	\end{abstract}
	
	\textbf{Keywords: } Asymptotic behaviour of nonlocal PDEs, Hamilton-Jacobi Equations,
	Small Diffusion Regime, Adaptive evolution, Horizontal gene transfer
	
	\renewcommand{\thefootnote}{\fnsymbol{footnote}} 
	\footnotetext{\emph{Mathematics Subject Classification 2020: }35B40 (Primary), 35Q92, 35F21, 35R09}     
	\renewcommand{\thefootnote}{\arabic{footnote}}
	
	\tableofcontents
	\section{Introduction}

	\subsection{Model and question}
	
		The exchange of genetic material between individuals of the same generation is known as \textit{horizontal gene transfer} (HGT), in contrast to vertical gene transfer, which involves the transmission of genetic information from parent to offspring. This genetic information may be altered during birth events due to \textit{mutations}.  Populations are also often subject to \textit{selection} and \textit{competition}. \textit{Selection} corresponds to the fact that the individuals with traits better adapted to their environment will have  a higher reproduction or survival rate, with the consequent effect on the distribution of traits in the population. The death rates of individuals may also be increased due to \textit{competition}   for instance for a limited resource. The combination of the growth rate of an individual or a trait, taking into account \textit{selection} and \textit{competition}, and the interactions between them is called \textit{fitness}. 
	\textit{Fitness} refers to the overall growth rate of an individual or trait, shaped by the processes of \textit{selection} and \textit{competition}, as well as the interactions between these factors, determining the ability to survive and reproduce in a given environment.
	
		We consider the following integro-differential model describing the dynamics of the phenotypic density of 	an asexual population subject to mutation, selection, competition and horizontal transfer:
	\begin{equation}\label{eq:time_dependant_main}
		\begin{cases}
			\partial_t  n( t,  z) = \sigma\partial^2_{zz} n (t,  z)+\left(R(z) -  \kappa \rho( t)\right) n( t, z) +  \tau n( t, z)\displaystyle\int_\R \frac{ n( t,  y)}{ \rho( t)} H(K( z - y))\ {\rm dy}\\
			n( 0,  z) = n_0(z),\\
			n(t, z)>0,\\
			\rho(t)=\displaystyle\int_\R n(t,y)\ {\rm dy}.
		\end{cases}
	\end{equation}
	A variant of this model was derived from a stochastic individual based model in \cite{BilliardColletFerriereMeleardTran}.
	Here, $n(t,z)$ stands for the phenotypic density of a population, with   $t\in \R^+$ and $z\in \R$ corresponding respectively to time and a phenotypic trait. The diffusion term models the mutations which generate phenotypic variability in the population. Individuals grow at rate $R(z)$, also referred to as the selection term, and are regulated by a uniform competition for resources with intensity $\kappa$. The last nonlinear and nonlocal term in the right hand side of the equation corresponds to the horizontal transfer term. More precisely, the positive value $\tau$ denotes the transfer rate and   $H(K(z-y))$ denotes the transfer flux from trait $y$ to trait $z$, with  $K$  a positive steepness parameter for the transfer flux (see Section \ref{sec:assumptions} to better understand its role).
	
	In this article, we will provide an asymptotic analysis of \eqref{eq:time_dependant_main} considering a small mutation rate $\sigma$ and large times. We show that the asymptotic behaviour of the solution can be described via a method involving Hamilton-Jacobi equations. This method has been widely used to study models of evolutionary adaptation considering   mutation, selection and competition \cite{BarlesPerthame2,CalvezLam-2020,DJMP,Lorz-Mirrahimi-Perthame - 2011,Mirrahimi-Roquejoffre - 2016}. We next use this Hamilton-Jacobi formulation of the problem to characterize the dynamics of the solution in two scenarios: (1) a situation where the population converges in long time to an optimal trait, determined by a balance between selection and horizontal gene transfer, (2) a situation where the population converges in long time to unfit traits leading to its extinction. This situation is known as the evolutionary suicide. Our analysis also provides insights on the dynamics of the solution in more complex situations where we expect oscillatory behaviours and emergence in long time of several dominant traits.
	
	A by-product of our analysis is the extension of previous regularity and uniqueness results within the Hamilton-Jacobi method \cite{Lorz-Mirrahimi-Perthame - 2011,Mirrahimi-Roquejoffre - 2016} considering globally concave fitness functions, to the case  with only locally concave fitness functions.  We present such a uniqueness and regularity result considering  fitness functions with a constant local concavity zone,   independently of the rest of the paper in Section~\ref{sec:Dyn_Prog},  since  it can be useful for applications to other models.   The situation in this paper is however more intricate since the local concavity zone of the fitness function evolves with time. In Section \ref{sect:non_concave}, we   show, via the  asymptotic analysis of \eqref{eq:time_dependant_main}, how to apply the method to such a situation. We believe that this method will have many applications in the asymptotic analysis of models from evolutionary biology.
	
	\subsection{Biological motivation and the expected outcomes}

	Bacterial plasmids are a typical example of horizontal transfer mechanisms that influence the genetic distribution of a population, see~\cite{DR.ER.SB:11, FS.BL:77}. Plasmids are small circular doubly stranded DNA, physically separated from the chromosomal DNA, which may be replicated and transferred  from one cell to another, when they are in contact, independently of the chromosome.  They can modify strongly the fitness of their hosts, since they carry factors that can be beneficial for the survival of the bacteria and lead to a selective advantage, as for genes for antibiotic resistance, but they also convey certain fitness costs, like reduced reproduction rate. The question that rises is the following: what is the  outcome of the trade-off between the fitness costs of the plasmids and their advantage by their accelerated spread?
	
	Even though  these exchanges of genetic information, intra and intergenerational, are not mutually exclusive, the models that consider only selection and mutation, where the population is usually driven to the fittest traits, have received more attention in the past, see for example~\cite{BarlesPerthame2,LD.PJ.SM.GR:08,DJMP,Lorz-Mirrahimi-Perthame - 2011}. However, HGT has a major role in the evolution and adaptation of many organisms, as for instance in the evolution of bacterial virulence or antibiotic resistance \cite{HO.JL.EG:00}. 	Recent studies have shown that the interplay between mutation, HGT and selection may lead to new behaviours when compared to models considering only selection and mutation~\cite{SB.PC.RF:16,BilliardColletFerriereMeleardTran,CalvezFigueroaAl,Garriz-Leculier-Mirrahimi}. In \cite{SB.PC.RF:16,BilliardColletFerriereMeleardTran,CalvezFigueroaAl}, three phenomena were observed numerically. First, similarly to the behaviour of most models without horizontal transfer, we have the convergence in long time of the genetic distribution to an optimal trait which, in general, may not coincide with the optimal trait of the selection term of the model, due to the effect of the HGT. Second, the horizontal transfer might drive the population to unfit traits, leading to its extinction, a situation referred to as \textit{evolutionary suicide}. Third, the population can be   driven to  unfit traits leading almost to its extinction but a re-emergence of an apparently extinct trait with positive fitness can ensue; this is known as \textit{evolutionary rescue}. Such behaviour is then repeated in a   cyclic fashion. Note that, for example, in the case where one is studying a population of a pathogen  the third scenario may for instance be interpreted as a re-emergence of antibiotic resistance, while the second one may correspond to a successful treatment where the pathogen is eliminated. A possible fourth outcome of this interplay was characterized more recently in~\cite{Garriz-Leculier-Mirrahimi} via a theoretical study of an integro-differential model.  In this scenario, the steady population distribution concentrates around not one but several traits, depending on the strength of the HGT relative to the selection term. The numerical investigations in~\cite{Garriz-Leculier-Mirrahimi} and the present article, show however that the long time convergence of the solution to such steady distribution may arise via transitory oscillations of the phenotypic distribution close to the cyclic scenarios   numerically observed in \cite{SB.PC.RF:16,BilliardColletFerriereMeleardTran,CalvezFigueroaAl}. This suggests that the third and the fourth scenarios mentioned above may correspond to the same scenarios. The mathematical identification and description of these complex scenarios   has been  the main motivation for the present article.

	\subsection{State of the art}

	Our study stems from a series of works based on stochastic individual based models motivated by the eco-evolutionary dynamics of bacterial plasmids \cite{SB.PC.RF:16,BilliardColletFerriereMeleardTran,CalvezFigueroaAl,NC.SM.CT:21,Garriz-Leculier-Mirrahimi}. More specifically, in \cite{BilliardColletFerriereMeleardTran} a stochastic individual based model was introduced considering a quantitative trait. The authors showed that in the limit of large populations their model converges to an equation close to \eqref{eq:time_dependant_main}, where the mutation term is modelled via an integral kernel rather than a Laplace term. The three types of behaviour discussed earlier (convergence to a certain trait, evolutionary suicide and cyclic evolutionary rescue) were identified in the numerical simulations in \cite{BilliardColletFerriereMeleardTran}. In \cite{CalvezFigueroaAl} similar types of behaviour were observed, when the stochastic simulations were compared with the numerical resolution of an integro-differential model derived in \cite{BilliardColletFerriereMeleardTran}, considering small mutational effects. In \cite{NC.SM.CT:21} a stochastic model with a finite number of strains was studied theoretically. In the particular case of three strains, a periodic behaviour was captured for a certain range of parameters.
	
	In~\cite{Garriz-Leculier-Mirrahimi} the authors investigated the existence and the shape of equilibria of~\eqref{eq:time_dependant_main}. The small diffusion regime was   studied using an approach based on Hamilton-Jacobi equations, proving the convergence, as $\sigma\to 0$, of a logarithmic transformation of solutions of the elliptic version of problem~\eqref{eq:time_dependant_main} to the solution, in the viscosity sense, of a Hamilton-Jacobi equation with a constraint. The  authors proved the existence of equilibria where the population concentrated around one, two or more traits.   Notice that this scenario was not captured in the previous studies discussed earlier.   In this case, the population seems to initially undergo an  oscillatory phase close to the cyclic behaviour observed in \cite{SB.PC.RF:16,BilliardColletFerriereMeleardTran,CalvezFigueroaAl} and to eventually concentrate  around the expected dominant traits. The  equilibria with one or  two concentration points were fully characterized and  several numerical simulations were conducted, depicting convergence of the solution to these complex states, with one, two or more concentration points.

	Less closely related to our work, \cite{LEVIN1979, FS.BL:77} study a model
 considering a finite number of traits  and using ordinary differential equations. Models in a population genetics context without ecological concern were considered in~\cite{AN.GK.EK:05,ST.SB:13}, and finally some integro-differential models of horizontal transfer have been inspected in a different context than our work in \cite{PH.LF.PM.GW:09,PM.GR:15}.

	In this article, we provide an asymptotic analysis of solutions of \eqref{eq:time_dependant_main} considering small mutational effects, i.e. with $\sigma$   small, and long time. Our analysis  provides a partial theoretical description of the behaviours observed numerically in \cite{BilliardColletFerriereMeleardTran,CalvezFigueroaAl,Garriz-Leculier-Mirrahimi}. Note that here the effect of the mutation has been modelled using the Laplace operator, instead of an integral kernel as in  \cite{BilliardColletFerriereMeleardTran,CalvezFigueroaAl}, to reduce the technicality of the analysis, but we believe that this choice would not modify the qualitative behaviour of the solution in the limit of vanishing mutations. To perform our analysis, we use an approach based on Hamilton-Jacobi equations. This approach was first introduced in \cite{DJMP} and then widely developed to study models of quantitative traits from evolutionary biology (see for instance \cite{BarlesMirrahimiPerthame, BarlesPerthame1, BarlesPerthame2,Lorz-Mirrahimi-Perthame - 2011}). A closely related approach was also previously used in the geometric optics approximation of solutions of reaction-diffusion equations (see for instance \cite{ EvansSouganidis, Freidlin}). In the case of models with horizontal gene transfer this approach was used in~\cite{CalvezFigueroaAl} which provides some heuristic computations of the problem and   in~\cite{Garriz-Leculier-Mirrahimi} which studies the steady solutions of \eqref{eq:time_dependant_main}. Here, we treat the time dependent problem. To this end, we extend previous regularity and uniqueness results within the Hamilton-Jacobi method  in~\cite{Lorz-Mirrahimi-Perthame - 2011,Mirrahimi-Roquejoffre - 2016}, which consider some global concavity assumptions of the growth rate, to situations where the growth rate is only locally concave.
	
	%%%%%%%%%%%%%
	\subsection{An adimensional parameterization of the problem}
	\label{sec:change_of_variables}
	%%%%%%%%%%%%%%	

	We introduce a dimensionless parametrization of the problem via the following  change of variables
	$$
	\tilde t= r\varepsilon t,\quad \tilde z= K z,\quad \tilde n(\tilde t, \tilde z)=\frac{\kappa}{r K}\cdot n\left(\frac{\tilde t}{\varepsilon r},\frac{\tilde z}{K}\right), \quad \varepsilon^2=\frac{\sigma K^2}{r}, \quad \tilde R(\tilde z)=\frac{R(\frac{\tilde z}{K})}{r} ,\quad \tilde \tau = \frac{\tau}{r},
	$$
	where $r$ is defined as
	$$
	r:=\max\limits_{z\in \R} R(z).
	$$
	The problem \eqref{eq:time_dependant_main} is then written (we drop the tildes   for the sake of readability) as
	\begin{equation}\label{eq:main}
		\begin{cases}
			\begin{aligned}
			\varepsilon \partial_t n_\varepsilon(t,z)=&\varepsilon^2 \partial^2_{zz}n_\varepsilon (t,z)+\left(R(z) -  \rho_\varepsilon(t) \right) n_\varepsilon(t,z)\\
			& +  \tau\cdot n_\varepsilon(t, z)\displaystyle\int_\R \frac{ n_\varepsilon(t, y)}{ \rho_\varepsilon(t)}H( z - y)\ {\rm dy}
			\end{aligned}\\
			n_\varepsilon(0,z)=n_{\varepsilon,0}(z)>0,\\
			n_\varepsilon(t, z)>0,\\
			\rho_\varepsilon(t)=\displaystyle\int_\R n_\varepsilon(t,y)\ {\rm dy}.
		\end{cases}
	\end{equation}
	Note that in this new version, the selection term is re-normalized such   that
	$$
	\max_{z\in\R}R(z)=1.
	$$
	In the particular case where $R(z)=r-gz^2$, which we will study in detail later, we also consider the following change of variable
	$$
	\tilde g=\frac{g}{r K^2},
	$$
	which leads, again after dropping the tilde for the sake of readability, to
	\[
		\begin{cases}
		\begin{aligned}
			\varepsilon \partial_t n_\varepsilon(t,z)=&\varepsilon^2 \partial^2_{zz}n_\varepsilon (t,z)+\left(1-gz^2 -  \rho_\varepsilon(t) \right) n_\varepsilon(t,z)\\
			& +  \tau\cdot n_\varepsilon( t,z)\displaystyle\int_\R \frac{ n_\varepsilon(t, y)}{ \rho_\varepsilon(t)}H( z - y)\ {\rm dy},
			\end{aligned}\\
			n_\varepsilon(0,z)=n_{\varepsilon,0}(z)>0,\\
			n_\varepsilon(t, z)>0,\\
			\rho_\varepsilon=\displaystyle\int_\R n_\varepsilon(t,y)\ {\rm dy}.
		\end{cases}
	\]

	%%%%%%%%%%%%%
	\subsection{Assumptions}
	\label{sec:assumptions}
	%%%%%%%%%%%%%%

	We provide the first set of assumptions on the growth term $R(z)$. We will assume that $R\in C^2(\R)$ and that there exists a set of positive constants $K_1,..., K_5$ and $\underline{K}_0, \overline{K}_0$ such that
\begin{equation}\label{hyp:R}\tag{HR1}
\begin{aligned}
	K_3-K_4z^2&\leq R(z)\leq K_1-K_2z^2,\\
	-\underline{K}_0&\leq R''(z)\leq -\overline{K}_0,\\
	&\quad\|R'''\|_{L^\infty(\R)}\leq K_5.
\end{aligned}	
\end{equation}

We define
$$
D_R=\{z\in \R\,\ :\ R(z)>0\}.
$$
From Assumption \eqref{hyp:R} we deduce that $D_R$ is a bounded open interval. Let us also define $z_\mu\in \R$ as the unique point such that
$$
\tau+R'(z_\mu)=0.
$$

For some of our results we also  suppose that 
\begin{equation}\label{hyp:R2}\tag{HR2}
	%&R(z)> 0\text{ for all }z\in D_R,\quad R(z)< 0\text{ for all }z\not\in \overline{D_R},\\	
	 	z_\mu\in D_R.
\end{equation}
A typical example of such growth term $R$ is given by
	\begin{equation}\label{R:quadratic}
		R(z)=1-gz^2,\quad \text{with} \quad \tau^2\leq 4g,
	\end{equation}
which, as commented, will be studied in detail later on in the article.	
	
	We make the following assumptions on the transfer term $H$:
	\begin{equation}\label{eq:hypothesis_H}\tag{HT}
		\begin{aligned}
			(1)\quad &H \in C^3(\mathbb{R}) \text{, with bounded derivatives, is odd and monotone increasing from -1 to 1}.\\
			(2)\quad &H(0)=0, H'(0)=1\text{ and } H''(z)<0\text{ for all }z>0 .\\
			(3)\quad &\text{There exists a positive }z_H\text{ such that for all }|z|>z_H, H'''(z)>0,\\
			&\text{while for all }|z|\leq z_H, H'''(z)\leq 0.
		\end{aligned}
	\end{equation}
	Two examples of functions satisfying these assumptions are
	$$
	H(z)=\tanh(z)\qquad \text{or}\qquad H(z)=\frac{2}{\pi}\arctan(z).
	$$
	One can think of this kernel as $H(z-y) = \alpha(z-y) - \alpha(y-z)$ with $\alpha$ a smooth function that behaves like a Heaviside step function. Then, one would consider that the transfer arises only from larger traits $y$ to smaller traits $z$, with $z<y$ and the transfer rate between $y$ to $z$ would be given by $\alpha(z-y)$. This choice of transfer term is motivated by the example of plasmids which are transmitted from one bacterium to another by cell-to-cell contact. 
	Next, the value $\tau$, which is understood as the strength of the transfer, is considered to be strictly positive, i.e.,
	%\begin{equation}\tag{H2}	
	$$
	\tau>0.
	$$

	We next provide the conditions on the initial datum. We consider an initial datum $n_{\varepsilon,0}(z)>0$, a continuous function such that
	$$
	u_{\varepsilon,0}(z):=\varepsilon\ln (n_{\varepsilon,0}(z))
	$$
	satisfies, for all $\varepsilon\in(0,\varepsilon_0)$ for a certain $\varepsilon_0>0$, for a set of positive constants $A_1$, $A_2$, $B_1$, $B_2$, $C_1$, $C_2$ and $C_3$,  	
	\begin{equation}\label{eq:hypothesis_u_0}\tag{H0}
	\begin{aligned}
	-A_1-B_1z^2\leq  u_{\varepsilon,0}(z)&\leq A_2-B_2z^2,\\
	-C_2\leq\partial^2_{zz}u_{\varepsilon,0}(z)&\leq-C_1,\\
	\max\limits_{z\in \R} u_{\varepsilon,0}(z)&=u_{\varepsilon, 0}(z_{\varepsilon,0})\text{ for a unique }z_{\varepsilon,0}\in D_R,\\
	\|\partial^3_{zzz}u_{\varepsilon,0}\|_{L^\infty(\R)}&\leq C_3, \text{ and}\\
	\lim\limits_{\varepsilon\to 0} u_{\varepsilon,0}(z) &= u_0(z)\text{ (locally uniformly),}\\
	\text{with }  \max_{z\in \R} u_0(z)&=u_0(z_0)=0
	\end{aligned}
	\end{equation}
	for a certain $z_0\in D_R$.	A classical example of such an initial datum is given by
	\[
	n_{\varepsilon,0}(z)=\frac{1}{\sqrt{\varepsilon}}e^{-c\frac{(z-z_0)^2}{\varepsilon}},\text{ meaning that }\lim\limits_{\varepsilon\to 0} u_{\varepsilon,0}(z) = -c(z-z_0)^2,
	\]
	for any $c>0$. The condition   $z_{\varepsilon,0}\in D_R$ ensures that the initial population is not maladapted. It implies indeed that the phenotypic distribution is initially concentrated around a trait that has a positive growth rate. 
	
	Finally, we assume that the initial population size $\rho_\varepsilon$ satisfies, for two positive constants $\rho_m,\rho_M$,
	\begin{equation}\label{eq:hypothesis_rho}\tag{HM}	
	\begin{cases}
	0<\rho_m\leq\rho_\varepsilon(0)\leq \rho_M\quad\text{for all }\varepsilon\in (0,1),\\
	\rho_\e(0)\to \rho_0=R(z_0)>0,\quad \text{as $\e\to 0$}.
	\end{cases}
	\end{equation}

%%%%%%%%%%%%%%%%%
	\subsection{Main results}
	\label{subsec:results}
%%%%%%%%%%%%%%%%%%
	
  We expect the solution $n_\e$ to concentrate, as  $\e\to 0$,  around certain dominant traits, forming Dirac's delta functions in the limit. In order to identify such singular limits, we use an approach involving Hamilton-Jacobi equations \cite{BarlesMirrahimiPerthame, BarlesPerthame2,DJMP}. The main ingredient in this approach is to perform the following Hopf-Cole transformation
  \begin{equation}\label{eq:transformed_hopf_cole}
	u_\varepsilon(t,z):=\varepsilon\cdot \ln\big(n_\varepsilon(t,z)\big),
\end{equation}
 which allows us to unfold the singularity of the problem. Indeed, while $n_\varepsilon$ tends, as $\varepsilon\to 0$, to a singular measure, $u_\varepsilon$ converges to a continuous function $u$ which solves a Hamilton-Jacobi equation. The main idea is to first study the limit of $u_\varepsilon$, which we call $u$, and next to use some information on the function $u$ to identify $n$. 
 
Replacing~\eqref{eq:transformed_hopf_cole}    in~\eqref{eq:main} we obtain the following equation on $u_\varepsilon$
\[
	\begin{cases}
		\partial_t u_\varepsilon(t,z)=\varepsilon \partial^2_{zz}u_\varepsilon (t,z)+(\partial_{z}  u_\varepsilon(t,z))^2+R(z) -  \rho_\varepsilon(t) + \Phi_\varepsilon(t,z) ,\\
		u_\varepsilon(0,z)=u_{\varepsilon,0}(z),\\
		\rho_\varepsilon(t)=\displaystyle\int_\R n_\varepsilon(t,y)\ {\rm dy}.
	\end{cases}
\]
{where
\[
\Phi_\varepsilon(t,z):= \tau\displaystyle\int_\R \frac{ n_\varepsilon(t, y)}{ \rho_\varepsilon(t)}H( z - y)\ {\rm dy}.
\]}
Then, passing formally to the limit in the equation on $u_\e$, we obtain 
\begin{equation}\label{eq:viscosity_u}
		\begin{cases}
			\partial_t u(t,z)= | \partial_z u(t,z)|^2 + R(z) - \rho(t) +\Phi(t,z),\quad & t>0, z\in\R,\\
			%\max\limits_{z\in\R}u(t,z)=0,& t>0,\\
			u(0,z)=u_0(z), &z\in \R.
		\end{cases}
\end{equation}
 with,
\[
\Phi(t,z):=\tau \int_\R \frac{n(t,y)}{\rho(t)}H(z-y)\ {\rm d}y, \quad \text{ and } \quad \rho(t) = \int_{\mathbb{R}} n(t,y)dy.
\]
  
   Let
  \[
  \rho_{max}:=\max\{\max\limits_{z\in \R} R(z), \rho_M\}.
  \]
  We prove the following.
  
\begin{thm}\label{thm:limit_epsilon}
		Let conditions~\eqref{hyp:R},~\eqref{eq:hypothesis_H},~\eqref{eq:hypothesis_u_0} and~\eqref{eq:hypothesis_rho} be satisfied. As $\varepsilon\to 0$ and along subsequences,
		 $n_\e$ converges in $L^\infty(w \ast(\R^+);\mathcal M^1(\R))$ to a measure $n\in L^\infty(\R^+;\mathcal M^1(\R))$, $\rho_\e$ converges in $L^\infty(w \ast(\R^+))$ to a function $\rho\in L^\infty(\R^+)$ and $\Phi_\e$ converges in  $L^\infty(w\ast(\R^+); C^2 (\R))$ to a function $\Phi\in L^\infty(\R^+;C^2( \R))$ with
\begin{equation}
\label{bound-rho}
\rho(t) = \int_{\mathbb{R}} n(t,y)dy,\qquad	   0\leq \rho(t) \leq \rho_{max},	\qquad\text{and}\quad   |\Phi(t,z) |\leq \tau.
\end{equation}
 The solutions $u_{\varepsilon }$ converge locally uniformly to a continuous function $u$ that is a viscosity solution of the Hamilton-Jacobi equation~\eqref{eq:viscosity_u}. Moreover the solution $u$ is given by the Dynamic Programming Principle
\begin{equation}\label{eq:dyn_prog}
	u(t,z) = \sup_{(\gamma(s),s)\in\R\times[0,t], \gamma(t)=z}\Big\{f_t(\gamma)\ ,\text{ with }\gamma\in W^{1,2}([0,t] : \R)\Big\},
\end{equation}
with
\[
	f_t(\gamma)=u_0(\gamma(0))+\int_0^t\Big(-\frac{|\dot\gamma(s)|^2}4+R(\gamma(s))-\rho(s)+\Phi(s,\gamma(s))\Big)ds,
\]
 We also have
 \begin{equation}\label{eq:support}
 u(t,z)\leq 0\quad\text{and}\quad\supp(n(t,\cdot))\subseteq \{z\in \R\ :\ u(t,z)=0\}.
 \end{equation}

\end{thm}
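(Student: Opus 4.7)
The plan is to first derive uniform-in-$\varepsilon$ estimates on $u_\varepsilon=\varepsilon\ln n_\varepsilon$, $\rho_\varepsilon$, and $\Phi_\varepsilon$, and then identify their limits via viscosity and Dynamic Programming methods. For the a priori bounds, I will build quadratic barriers $\alpha(t)\pm\beta(t)z^2$ for $u_\varepsilon$ using \eqref{hyp:R}, \eqref{eq:hypothesis_u_0}, and the trivial bound $|\Phi_\varepsilon|\leq\tau$ (from $|H|\leq 1$). The comparison principle then yields $-A_1(T)-B_1(T)z^2\leq u_\varepsilon(t,z)\leq A_2(T)-B_2(T)z^2$ uniformly on $[0,T]\times\R$. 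Integrating the $n_\varepsilon$-equation in $z$ and exploiting the antisymmetry of $H$ kills the transfer term, giving $\varepsilon\partial_t\rho_\varepsilon=\int R(z)n_\varepsilon\,dz-\rho_\varepsilon^2$, hence $\rho_\varepsilon\leq\rho_{max}$; a positive lower bound $\rho_\varepsilon\geq\rho_m(T)>0$ on $[0,T]$ then follows from the upper quadratic bound on $u_\varepsilon$ together with \eqref{eq:hypothesis_rho}.

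Next I will obtain uniform Lipschitz bounds by propagating semi-concavity in $z$. Differentiating twice, the quantity $w=\partial^2_{zz}u_\varepsilon$ satisfies a parabolic equation whose zero-order part is $2w^2+R''(z)+\partial^2_{zz}\Phi_\varepsilon$, where $R''\leq-\overline{K}_0$ by \eqref{hyp:R} and $|\partial^2_{zz}\Phi_\varepsilon|\leq\tau\|H''\|_\infty$. A maximum-principle argument then yields $\partial^2_{zz}u_\varepsilon\leq -c(T)<0$ on $[0,T]\times\R$ together with a matching lower bound. Combined with the quadratic barriers this gives local Lipschitz bounds on $\partial_z u_\varepsilon$, and through the equation on $\partial_t u_\varepsilon$. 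By Arzelà–Ascoli, $u_\varepsilon\to u$ locally uniformly along a subsequence; Banach–Alaoglu provides weak-$\ast$ limits $\rho$ of $\rho_\varepsilon$ and $n$ of $n_\varepsilon$ satisfying \eqref{bound-rho} (the quadratic barrier ensures no escape of mass at infinity, so $\rho(t)=\int n(t,\cdot)$), and the convolution structure of $\Phi_\varepsilon$ combined with uniform bounds on $H, H', H''$ yields convergence of $\Phi_\varepsilon$ in $L^\infty(w\ast;C^2(\R))$.

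To obtain the limiting Hamilton-Jacobi equation I pass to the limit in the $u_\varepsilon$-equation by the method of half-relaxed limits; the low $t$-regularity of the source $R(z)-\rho(t)+\Phi(t,z)$ is accommodated by the notion of viscosity solution with bounded measurable time-dependence, or equivalently bypassed by taking the Dynamic Programming formulation as the primary characterization of $u$. The representation \eqref{eq:dyn_prog} then follows from standard optimal-control theory, with action $|v|^2/4$ arising as the Legendre transform of the Hamiltonian $|p|^2$. The inequality $u\leq 0$ and the support inclusion \eqref{eq:support} both follow by contradiction from the two-sided bound on $\rho_\varepsilon$: if $u(t_0,z_0)>\delta>0$, equicontinuity of $u_\varepsilon$ gives $n_\varepsilon\geq e^{\delta/(2\varepsilon)}$ on a fixed-size neighbourhood, forcing $\rho_\varepsilon(t_0)\to+\infty$; conversely if $u(t_0,z_0)<-\delta$, then $n_\varepsilon\to 0$ locally uniformly near $(t_0,z_0)$ and that point lies outside the support of the weak-$\ast$ limit.

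The main technical obstacle is the uniform semi-concavity estimate: the maximum-principle argument on $w=\partial^2_{zz}u_\varepsilon$ needs $R''$ to dominate $\partial^2_{zz}\Phi_\varepsilon$ at a maximum point of $w$, which is not automatic without a smallness assumption relating $\tau\|H''\|_\infty$ to $\overline{K}_0$, and requires careful use of the favourable quadratic term $2w^2$ to close. A robust fallback is a Bernstein estimate on $|\partial_z u_\varepsilon|^2$, where the quadratic dissipation $(\partial_z u_\varepsilon)^2$ coming from the $|p|^2$ structure of the Hamiltonian absorbs the contributions of $R'$ and $\partial_z\Phi_\varepsilon$ and yields an $\varepsilon$-uniform Lipschitz bound in conjunction with the quadratic upper bound on $u_\varepsilon$ from Step 1.
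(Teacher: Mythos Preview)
Your overall architecture is right, and several steps (quadratic barriers by comparison, $\rho_\varepsilon\le\rho_{max}$ via the antisymmetry of $H$, the $u\le0$ and support arguments) match the paper. There are, however, two points where your plan departs from the paper and where the approach you propose would not go through as stated.

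\emph{First}, the claim that an upper quadratic bound on $u_\varepsilon$ yields a uniform lower bound $\rho_\varepsilon\ge\rho_m(T)>0$ is backwards: an upper bound on $u_\varepsilon$ controls $n_\varepsilon$ from above, not $\rho_\varepsilon$ from below. The lower quadratic barrier $u_\varepsilon\ge -A_1-\overline B_1 z^2-C_1t$ gives only $\rho_\varepsilon\ge C\sqrt{\varepsilon}\,e^{-(A_1+C_1t)/\varepsilon}$, which vanishes as $\varepsilon\to0$. The paper never proves (and does not need) a positive lower bound on $\rho_\varepsilon$ for Theorem~\ref{thm:limit_epsilon}; indeed in the evolutionary-suicide regime of Theorem~\ref{thm:limit_epsilon_non_concave}(ii) one has $\rho(t)=0$ at some finite time. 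Fortunately nothing in your later argument actually uses $\rho_\varepsilon\ge\rho_m(T)$, so you can simply drop this claim.

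\emph{Second}, the global semi-concavity $\partial_{zz}^2u_\varepsilon\le -c(T)<0$ that you propose as the main route is not available here, and not only for the quantitative reason you flag: the fitness $R(z)+\Phi_\varepsilon(t,z)$ is in general \emph{not} globally concave (this is precisely the regime the paper is designed to handle), so one cannot hope for a uniform strict upper bound on the second derivative on all of $\R$. The paper instead proves only (a) a locally uniform Lipschitz bound on $\partial_z u_\varepsilon$ by a Bernstein argument with a Kruzhkov-type change of unknown $u_\varepsilon=\theta(w)$, and (b) a locally uniform \emph{semi-convexity} (lower) bound on $\partial_{zz}^2u_\varepsilon$, again by Bernstein. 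Your fallback Bernstein estimate is thus the correct route and is exactly what the paper does; you should promote it to the primary argument.

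Finally, for the passage to the limit the paper uses a device you do not mention: it sets $w_\varepsilon(t,z)=u_\varepsilon(t,z)+\int_0^t\rho_\varepsilon(s)\,ds-\int_0^t\Phi_\varepsilon(s,z)\,ds$, so that the equation for $w_\varepsilon$ has only time-regular coefficients, then passes to the limit in $w_\varepsilon$, applies the Dynamic Programming Principle to $w$, and changes variables back (via Fubini) to obtain the representation formula for $u$. Your alternative of invoking the Ishii/Lions--Perthame framework for Hamiltonians with $L^1$ time-dependence is legitimate (the paper itself cites it to interpret~\eqref{eq:viscosity_u}), but the integrated-unknown trick is the concrete mechanism that makes the convergence and the DPP rigorous here.
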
	

Note that for any $t_0\in [0,t)$, we also have
\[
	u(t,z) = \sup_{(\gamma(s),s)\in\R\times[t_0,t], \gamma(t)=z}\Big\{f_t(\gamma)\ ,\text{ with }\gamma\in W^{1,2}([t_0,t] : \R)\Big\},
\]
with
\[
f_t(\gamma)=u(t_0,\gamma(t_0))+\int_{t_0}^t\Big(-\frac{|\dot\gamma(s)|^2}4+R(\gamma(s))-\rho(s)+\Phi(s,\gamma(s))\Big)ds.
\]

Notice that if the set of zeroes of $u$ is a set of discrete points, then $n$ is a sum of Dirac's delta functions supported in such a set.	The focus of this article is, in particular, on the so-called \textit{monomorphic} situation, where there is a unique function $\z(t)$ such that,  for a.e. $t\geq 0$,
	\[
	\z(t)=\supp(n(t,\cdot))=\{z\in \R\ :\ u(t,z)=0\}\quad\text{and}\quad u(t,z)<0\text{ for all }z\neq\z(t).
	\]
	In this article, we will say that $n$ or $u$ are \textit{continuous monomorphic} {iff} $\z(t)$ is continuous for all $t\geq 0$ and,   $n$ and $u$ will be called \textit{discontinuous monomorphic} {iff} there exists a $t_1>0$ such that
 $\z(t)$ is discontinuous at $t_1$.
Consequently, we will say that $n$ and $u$ are \textit{continuous monomorphic on $[t_i, t_j]$} {iff} $\z(t)$ is continuous for all $t\in[t_i, t_j]$.

We next focus on the specific growth rate $R(z)=1-gz^2$ and define
	\begin{equation}\label{eq:definition_mu}
	\mu=\frac{\tau}{2g}.
	\end{equation}
	This quantity will determine the behaviour of $n$ in the following sense.
	We expect that there is a positive constant $\mu_1$ such that if $\mu\leq \mu_1$ then the limit solution $n$ remains continuous  monomorphic. This threshold corresponds indeed to the one below which the solution of the elliptic version of~\eqref{eq:main} is monomorphic, see~\cite{Garriz-Leculier-Mirrahimi}.
	
 	We introduce the following function 
 	\[
 	F(t,z)  = R(z) - \rho(t) + \Phi(t,z),
 	\]
   	which corresponds to the right-hand side of \eqref{eq:viscosity_u} minus the gradient term. We will refer to $F(t,z)$ as the fitness function. Briefly, if we consider a monomorphic regime (implying that $\Phi (t,z)=\tau H(z-\bar{z}(t))$), then we can prove that $F(t,\z)$ converges, as $t$ grows, to the function
	\[
	F_\mu(z)=-g(z^2-\mu^2)+\tau H(z-\mu).
	\]
	We define then $\mu_1$ as 
	\[
	\mu_1:=\sup\{\mu>0\ :\ F_\mu(z)<0\text{ for all }z\neq \mu, F_\mu(\mu)=0\}.
	\]
	It is proved in~\cite[Section 4.2]{Garriz-Leculier-Mirrahimi} that this value can be computed by finding the unique positive value $d_1$ such that
	\begin{equation}\label{eq:monomorphic_range}
	2H(d_1)=d_1(1+H'(d_1)),
	\end{equation}
	which is well defined thanks to~\eqref{eq:hypothesis_H},  and then we have that
	\[
	\mu_1=\frac{d_1}{1-H'(d_1)}.
	\]
	When $\mu=\mu_1$, the function $F_{\mu_1}$ is still non-positive but it has two zeroes, one at $z=\mu_1$ and the other at $\mu_1-d_1$. Finally, if $\mu>\mu_1$, then $F_{\mu}(z)>0$ for some $z<\mu$.
	
	 It was proved in~\cite{Garriz-Leculier-Mirrahimi} that when $\mu \in [0,\mu_1]$, then the stationary solutions of~\eqref{eq:main} are monomorphic. Here, we prove that in this regime the solution of the time dependent problem is continuous monomorphic for a wide range on initial data. This dominant trait evolves and converges, as time grows to infinity, to the unique monomorphic stationary solution of~\eqref{eq:main}. We recall the definition of $z_0$ in \eqref{eq:hypothesis_u_0} as the point where the initial datum $u_0$ attains its maximum. Note that under the particular choice of $R(z)=1-gz^2$ hypothesis~\eqref{hyp:R} is satisfied automatically.

 \begin{thm}\label{thm:limit_epsilon_non_concave}
		Assume~\eqref{eq:hypothesis_H},~\eqref{eq:hypothesis_u_0} and~\eqref{eq:hypothesis_rho}, $z_0\leq \mu$ and let $R(z)=1-gz^2$.  \\
		(i) Assume that  $0\leq \mu\leq \mu_1$ and $\tau \leq 2\sqrt{g}$. Then, there exists an open set $\mathcal C\subset \R^+\times \R$, such that $u_\varepsilon$ converges, as $\varepsilon\to 0$,  to a continuous function $u$, with $u|_\mathcal C \in L^\infty_{t,{\rm loc}}  W^{3,\infty}_{x,{\rm loc}}( \mathcal C) \cap C^1(\mathcal C)$, which is the unique viscosity  solution of 
		\begin{equation}
		\label{eq:z,u}
		\begin{cases}
		\p_t u(t,z)=|\p_z u(t,z)|^2+ g\z^2(t)- gz^2 +\tau H(z-\bar z(t)),&\text{in $\R^+\times \R$} \\
		\bar{z}'(t)=(-\partial_{zz}u(t, \bar{z}(t)))^{-1}\,(\tau-2g\bar{z}(t)),&\text{in $\R^+$},
		\end{cases}
		\end{equation}
		with $(t,\z(t))\in \mathcal{C}$.
		 The function  $u(t,\cdot)$ has a unique maximum point at $\z\in C^1(\R^+)$, with $u(t,\z(t))=0$ and $\bar{z}(t)\to \mu$ as $t\to \infty$. Moreover, $u$ is strictly concave with respect to $z$ in $\mathcal C$.  \\
	Furthermore, as $\varepsilon\to 0$ and along the same subsequences as in  Theorem~\ref{thm:limit_epsilon}, $\rho_\varepsilon$ and $\Phi_\varepsilon$ converge pointwise to
	\begin{equation}
	\label{rho}
		\rho(t)= 1-g\bar{z}(t)^2>0,\quad \Phi (t,z)=\tau H(z-\bar{z}(t)),
	\end{equation}
	and $n_\varepsilon$ converges weakly in the sense of measures to
	\[
	n(t,z):=\rho(t)\cdot \delta_{\bar{z}(t)}(z),
	\]
	implying that $n$ is continuous monomorphic.\\
	(ii) Assume that  $0\leq \mu\leq \mu_1$ and $\tau > 2\sqrt{g}$. Then, the statements of the previous point hold true   until a time  $T_{\rho}$ at which $\rho(T_\rho)=0$, $\bar z(T_\rho)=1/\sqrt{g}$. 
 	
	\end{thm}

\begin{rem}[The globally concave case]
	In the particular case where $R(z)$ is such that hypothesis~\eqref{hyp:R} and~\eqref{hyp:R2} are satisfied and the fitness function $F(t,z)$ is globally uniformly and strictly concave, which would hold for $\tau$ small enough,  then we are in the framework of~\cite{Mirrahimi-Roquejoffre - 2016} and we obtain almost immediately the same results of Theorem~\ref{thm:limit_epsilon_non_concave} under the same hypothesis plus the concavity condition. 
\end{rem}

\begin{rem}
	In Section~\ref{sect:z_0>mu} we present a result similar to Theorem~\ref{thm:limit_epsilon_non_concave} regarding the case $z_0>\mu$, which is derived analogously to the case $z_0\leq \mu$ under an extra hypothesis over $F(0,z)$, where we assume that $F(0,z)$ may take positive values only in   a set of the form $(z_0, z_0+\delta)$ for some $\delta>0$. The set of positive values of $F(0,z)$ may indeed contain other points for some values of $z_0>\mu$, in which case our result may not hold anymore.
\end{rem}

\begin{rem}
Notice that the theorem above implies that when $\tau\geq2\sqrt{g}$, the population gets extinct, in finite time if $\tau>2\sqrt{g}$, and as $t\to\infty$ if $\tau=2\sqrt{g}$. In this case, the horizontal transfer drives the population to unfit traits, leading to its extinction, a situation referred to as \textit{evolutionary suicide}.
\end{rem}

Several difficulties arise in the proof of Theorem \ref{thm:limit_epsilon_non_concave}. First, we lack time regularity estimates on $\rho_\e$ and $\Phi_\e(t,z)$ to obtain their pointwise convergence as $\e\to 0$. Moreover, in order to derive \SM{\eqref{eq:z,u}}, we need regularity and concavity estimates on $u$. Such estimates were previously proved in \cite{Mirrahimi-Roquejoffre - 2016} in a situation where the fitness function is globally concave, which is not the case here. In order to overcome these difficulties, we extend the method introduced in \cite{Mirrahimi-Roquejoffre - 2016} to situations where the fitness function is only locally concave, with its local concavity zone possibly evolving with time. In this way, we obtain the required regularity and concavity estimate on $u$ which we next use   to obtain the pointwise convergence of $\rho_\e$ and $\Phi_\e$.

Notice that the fitness function $F(t,z)$, when considering  $R(z)=1-gz^2$ and a monomorphic density $n(t,z)=\rho(t)\delta (z-\bar z(t))$ at the limit $\e\to 0$,  is given by
\[
F(t,z)=1-gz^2-\rho(t)+\tau H(z-\z (t)).
\]
Moreover, we expect that $F(t,\z(t))=0$ and hence, similarly  to \eqref{rho},   we have $\rho(t)=1-g\z(t)^2$. We deduce that
\[
F(t,z)=-gz^2+g\z(t)^2+\tau H(z-\z(t)). 
\]
Therefore, $F(t,z)$ can be written as follows 
\begin{equation}
\label{F-G}
F(t,z)=G(z,\z(t)),\qquad G(z,y)=-gz^2+gy^2+\tau H(z-y).
\end{equation}
Notice that $G$ is a continuous function. The possible discontinuity in time of $F$ would come from the discontinuity of $\z(t)$.
Moreover, one can verify that 
\[
G(y,y)=0,\qquad \text{for all $y\in \R$.}
\]
In many  models from evolutionary biology, the fitness function can be written in this form (see e.g. \cite{SM.BP.PS:12,Mirrahimi-Roquejoffre - 2016,SF.SM:18}).

Indeed, as a product of our analysis, we obtain in Section~\ref{sec:Dyn_Prog} some new results of independent interest regarding the existence, uniqueness and regularity properties, of a family of Hamilton-Jacobi equations posed in $\R^d$ of the form
\begin{equation}\label{eq:G}
	\begin{cases}
		\partial_t v(t,z) = |D v(t,z)|^2+G(z,\z(t)),&   t\in [0, T],z\in\R^d,
		\\
		\max_{z\in\R}v(t,z)=v(\z(t),t)=0,& t\in [0, T],
		\\
		v(0,z)=v_0(z),& z\in\R^d,\\
		v_0(z_0)=0 &\text{for some }z_0\in \R,\quad v_0(z)<0 \text{ for all }z\neq z_0,
	\end{cases}
\end{equation}
where, among other conditions, $G$ satisfies a concavity estimate in a local sense; namely that there exists a domain $\Omega_0$ and a couple of constants $\underline{K}_1>\overline{K}_1>0$ such that for all $t\in[0,T]$ and $w\in\R^d$, 
\[
- 2\underline{K}_1|w|^2 \leq w\ D^2 G(z,\z)\ w^T \leq - 2\overline{K}_1|w|^2 < 0, \quad \text{for }z \in \overline \Omega_0,\ \z \in \overline \Omega_0.
\]
Here and in the rest of the article $Dv$ stands for the gradient of $v$ and $D^2G$ for the hessian matrix of $G$, all with respect to the $z$ variable.

The situation in this article is however more complex than the case above since the local concavity zone of the fitness function evolves over time. We show in Section~\ref{sect:non_concave} how to deal with this more complex situation. In Section~\ref{sect:beyond_monomorphism} we additionally explore insights beyond the monomorphic scenario.

 {\subsection{To what extent can the assumptions be relaxed?}}

			Let us discuss briefly the relevance of the hypotheses in~\eqref{eq:hypothesis_H} and~\eqref{hyp:R} in order to provide some insight into  a more general theory. The regularity required for $R$ and $H$ is essential in our analysis, see Sections~\ref{sec:Dyn_Prog} and~\ref{sect:non_concave}. Less regularity is needed if we are only interested in Theorem~\ref{thm:limit_epsilon}.

			Concerning the function $R$, the assumptions in~\eqref{hyp:R} are classical but not fundamental in our study. The first hypothesis could be replaced by other forms of local control, while the second assumption is there to ensure that the fitness function $F(t,z)$,  defined  in Section \ref{subsec:results},   is at least locally concave in a certain set, again see Sections~\ref{sec:Dyn_Prog} and~\ref{sect:non_concave}. Both these assumptions can indeed be weakened, as long as we ensure that the fitness function satisfies the conditions of Section~\ref{sec:Dyn_Prog}.   {The specific choice of $R$ in~\eqref{R:quadratic}, that we impose for Theorem~\ref{thm:limit_epsilon_non_concave},  together with the choice of $H$,   is however fundamental in order to properly define the quantity $\mu$ that determines the long time} behaviour of the solution, see~\eqref{eq:definition_mu} and the subsequent explanation.

		The oddness of $H$ (which implies $H(0)=0$) and its monotonicity property are not only biologically relevant but fundamental in many of our computations. The rest of the hypotheses in~\eqref{eq:hypothesis_H} are there, again, to provide the local concavity of the fitness function, but they also play other roles.   {In particular, they determine, alongside the choice $R(z)=1-gz^2$, the value of  $\mu$ in~\eqref{eq:definition_mu} (which would be replaced by $\mu=H'(0)\tau/2g$ if we remove the condition $H'(0)=1$)  and they ensure} {that the equation that determines the monomorphic range given in~\eqref{eq:monomorphic_range} has a unique positive solution; see the explanations before Theorem~\ref{thm:limit_epsilon_non_concave} and~\cite[Section 4.2]{Garriz-Leculier-Mirrahimi}. As long as we can ensure the local concavity of the fitness function and the existence and uniqueness of the positive solution of the equation determining the monomorphic range, these assumptions can also be weakened.

\subsection{Organization of the article}

The article is organised as follows. In Section~\ref{sec:Dyn_Prog} we present some useful results for Hamilton-Jacobi equations in $\R^d$ of the {form~\eqref{eq:G}} that we will use later in the one-dimensional case. Section~\ref{section:limit_varepsilon} is devoted to proving Theorem~\ref{thm:limit_epsilon}, while in Section~\ref{sect:non_concave} we will prove Theorem~\ref{thm:limit_epsilon_non_concave}. In Section~\ref{sect:z_0>mu} we comment on the case $\z(0)>\mu$, which is not covered by Theorem~\ref{thm:limit_epsilon_non_concave}, presenting a partial result when the fitness function satisfies a certain condition. Finally, Section~\ref{sect:beyond_monomorphism} offers some insight in the case $\mu>\mu_1$, where monomorphism is lost, even though this case lies beyond the scope of this article.

\section{Uniqueness and regularity for the Hamilton-Jacobi equation with constraint under a local concavity assumption}\label{sec:Dyn_Prog}
 	
Since the theory that we develop in this section is interesting on its own for the study of Hamilton-Jacobi equations with constraint, we will consider the more general problem \eqref{eq:G}.  It has been proven, under global concavity assumption on $G$ and $v_0$, that this problem can be reduced to a non standard PDE-ODE system, see~\cite{Mirrahimi-Roquejoffre - 2016}. Our objective is to extend this result to a more general framework, where we only assume local concavity on $G$. Notice that, with respect to the problem that we have in mind, we are this time working in dimension $d\geq 1$ and in a possibly bounded time interval. It is also important to highlight here that in Section~\ref{sect:non_concave} we will see how the ideas from this section can even apply to the case where the fitness function is not positive and locally concave in the same set for all times. Indeed, in our model, this set of concavity will change continuously over time.\\

Focusing on the propagation of the local concavity and regularity properties, we will first  study the following  problem
\begin{equation}\label{eq:1}
	\begin{cases}
		\partial_t v(t,z) = |D v(t,z)|^2+F(t,z),&   t\in [0, T],z\in\R^d,
		\\
		\max_{z\in\R}v(t,z)=0,& t\in [0, T],
		\\
		v(0,z)=v_0(z),& z\in\R^d,\\
		v_0(z_0)=0 &\text{for some }z_0\in \R,\quad v_0(z)<0 \text{ for all }z\neq z_0,
	\end{cases}
\end{equation}
 with $z_0$ a real constant,  $v_0$ a nonpositive continuous function and $0<T\leq \infty$. We consider $v$ a solution in the viscosity sense to~\eqref{eq:1}.\\

\noindent\textbf{Assumptions on $F$ and on the initial condition $v_0$}

We  assume that
\begin{equation}\label{eq:regularity_F}
F\in L^\infty_{loc}([0, T]\times \R^d),\quad DF\in C([0,T]\times \R^d)
\end{equation}
and that there exists a non-empty, open,  convex and smooth set $\Omega_0 \subset \R^d $ such that, for all $t\in[0, T]$,
\begin{equation}
\label{hyp1}
- 2\underline{K}_1|x|^2 \leq x\ D^2 F(t,z)\ x^T \leq - 2\overline{K}_1|x|^2 < 0, \quad \text{for any }z \in \overline \Omega_0,\text{ for all }x\in\R^d,
\end{equation}
\begin{equation}
\label{hyp2}
\begin{array}{c}
\forall x\in\R\setminus\Omega_0,\forall y\in  \Omega_0 \qquad F(t,x)<F(t,y),
\end{array}
\end{equation}
\begin{equation}
\label{as:nablaR0}
D F(t,z) \neq 0, \qquad \text{for all }z\in \p \Omega_0,
\end{equation}
and
\begin{equation}\label{asRD3}
\|D^3F(t,\cdot) \|_{L^\infty(\Omega_0)}\leq K_4.
\end{equation}
We make the following assumptions on the initial condition
\begin{equation}
 \label{eq:hypz0}
z_0\in \Omega_0,
\end{equation}
\begin{equation}
 \label{eq:hyp3}
 \forall x\in\R\setminus\Omega_0,\forall y\in \Omega_0, \quad v_0(x) <v_0(y),
\end{equation}
\begin{equation}
\label{as:u0concave}
- 2\underline{L}_1|x|^2 \leq x\ D^2 v_0(z)\ x^T \leq - 2\overline{L}_1|x|^2 < 0, \quad \text{for any }z \in \overline \Omega_0,\text{ for all }x\in\R^d,
\end{equation}
\begin{equation} \label{asuD3}
\|D^3v_0 \|_{L^\infty(\overline \Omega_0)} \leq L_3.
\end{equation}
%\SM{{\bf I removed the assumption $F(0,z_0)=0$. If it is not needed in Theorem 2.2., then there is no reason to put it. The assumptions that we will need for the next theorem will be on $G$ .}}
%Finally, we assume that
%\begin{equation}
%\label{as:u0-I0}
%F(0,z_0)=0.
%\end{equation}
%
%\textcolor{blue}{
%\begin{rem}
%	This last hypothesis~\eqref{as:u0-I0} is needed to establish a complete equivalence between problem~\eqref{eq:1} and problem~\eqref{eq:3}--~\eqref{eq:initial_cond_v_G}, which will be studied later in order to prove uniqueness. It is, however, not necessary for the results present in Theorem~\ref{thm:section_Dyn_Prog_1}. Note also that conditions~\eqref{eq:regularity_F}--\eqref{as:u0-I0} are compatible with a function $F$ that is discontinuous in time.
%\end{rem}
%}

We next state the main results in this framework.

\begin{thm}
\label{thm:section_Dyn_Prog_1}
Let $F$ satisfy~\eqref{eq:regularity_F}--\eqref{asuD3}. Then any viscosity solution $v$ to~\eqref{eq:1} is indeed  classical and strictly concave in the set $\Omega_0$, and we have
\[
\begin{aligned}
&v|_{\Omega_0} \in L^\infty_{\rm loc} ([0,T] : W^{3,\infty}_{loc}( \Omega_0)) \cap C^1([0,T]\times\Omega_0),\\
&D v\in C^1([0,T]\times\Omega_0) \quad \text{and} \quad v(t,\cdot)|_{ \Omega_0}\in C^2( \Omega_0).
\end{aligned}
\]
Furthermore, for all $t\in [0, T]$, the maximum of $v(t,\cdot)$ in $\R^d$ is attained at a single point $\z(t) \in \overline \Omega_0$, a function of time satisfying $\z\in C^1([0, T])$.  
\end{thm}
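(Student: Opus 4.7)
The plan is to combine an approximation by globally concave data, for which the analysis of \cite{Mirrahimi-Roquejoffre - 2016} applies directly, with a dynamic-programming argument ensuring that the constructed solutions and their maximum points remain trapped in $\overline{\Omega_0}$ uniformly in the approximation parameter. The key observation is that, under \eqref{hyp2}, \eqref{as:nablaR0}, and \eqref{eq:hyp3}, the set $\Omega_0$ is absorbing in the sense that the supremum in the dynamic programming formula
\[
v(t,z)=\sup_{\gamma(t)=z,\, \gamma\in W^{1,2}([0,t])}\Big\{v_0(\gamma(0))+\int_0^t\Big(-\tfrac{|\dot\gamma(s)|^2}{4}+F(s,\gamma(s))\Big)\,ds\Big\}
\]
is attained, for every $z\in\overline{\Omega_0}$, by a trajectory that remains in $\overline{\Omega_0}$; this formula follows for \eqref{eq:1} by the same viscosity argument as for Theorem~\ref{thm:limit_epsilon}, since \eqref{eq:1} is precisely of the form \eqref{eq:viscosity_u} with $F$ in place of $R-\rho+\Phi$.

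First I would construct a one-parameter family $(F^\delta, v_0^\delta)$ of $C^3$ globally strictly concave extensions of $F$ and $v_0$ that coincide with the original data on a neighbourhood of $\overline{\Omega_0}$ and preserve the ordering conditions \eqref{hyp2} and \eqref{eq:hyp3}; their Hessian and third-derivative bounds outside $\Omega_0$ can be taken of the same order as those of the original data on $\Omega_0$ via a standard mollification and quadratic-matching construction. The problem \eqref{eq:1} with these data falls within the setting of \cite{Mirrahimi-Roquejoffre - 2016} and thus admits a classical, globally strictly concave solution $v^\delta$, with $W^{3,\infty}_{\rm loc}$ bounds in space and $C^1$ regularity in time that are uniform in $\delta$ on $\overline{\Omega_0}$, together with a unique maximizer $\bar z^\delta(t)\in C^1([0,T])$ satisfying $Dv^\delta(t,\bar z^\delta(t))=0$.

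Next I would pass to the limit $\delta\to 0$. By stability of viscosity solutions and the uniform $L^\infty_{\rm loc}$ Lipschitz bounds, $v^\delta$ converges locally uniformly along a subsequence to a viscosity solution of \eqref{eq:1}; the dynamic-programming characterization, together with the confinement to $\overline{\Omega_0}$ of optimal trajectories described below, identifies this limit with $v$. The uniform $W^{3,\infty}_{\rm loc}$ and strict-concavity estimates on $\Omega_0$ pass to the limit, producing $v|_{\Omega_0}\in L^\infty_{\rm loc}([0,T]; W^{3,\infty}_{\rm loc}(\Omega_0))$ and the Hessian bound $D^2 v(t,\cdot)\leq -2\overline{L}_1 I$ on $\Omega_0$. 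Rewriting $\p_t v=|Dv|^2+F(t,z)$ in the classical sense on $\Omega_0$ gives the joint regularity $Dv\in C^1([0,T]\times\Omega_0)$ and $v(t,\cdot)|_{\Omega_0}\in C^2(\Omega_0)$. Uniqueness of $\bar z(t)$ is then immediate from strict concavity, while the implicit function theorem applied to $Dv(t,\bar z(t))=0$, with $D^2 v(t,\bar z(t))$ invertible thanks to the concavity bound, yields $\bar z\in C^1([0,T])$.

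The main obstacle I anticipate is the uniform confinement argument for optimal trajectories, which both delimits the possible locations of $\bar z^\delta(t)$ and ensures that the strict local concavity survives the $\delta\to 0$ limit. Concretely, one must show that a near-optimal trajectory in the variational problem cannot make excursions outside $\Omega_0$: using the transversality condition \eqref{as:nablaR0} and the strict ordering \eqref{hyp2}, any $C^1$ curve crossing $\p\Omega_0$ can be reflected back into $\Omega_0$ along a short perturbation whose kinetic cost is controlled by the magnitude of the excursion while strictly improving the potential contribution $\int F(s,\gamma(s))\,ds$, and hence cannot maximize the action; a symmetric argument for the initial endpoint, using \eqref{eq:hyp3}, forces $\gamma(0)\in\overline{\Omega_0}$. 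This replaces the global maximum-principle arguments of \cite{Mirrahimi-Roquejoffre - 2016} and is the step that actually uses the full strength of the boundary hypotheses \eqref{hyp2}, \eqref{as:nablaR0}, \eqref{eq:hyp3}, and \eqref{as:u0-I0}.
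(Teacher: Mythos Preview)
Your route via globally concave approximations is genuinely different from the paper's, which works directly with the dynamic-programming formula for the given $v$ and never approximates. The paper's confinement argument (its Step~2) is simpler than your reflection sketch: given an optimal trajectory with endpoint in $\overline{\Omega_0}$ that makes an excursion outside, one replaces the excursion by the straight segment joining its endpoints, which lies in $\overline{\Omega_0}$ by convexity, strictly decreases the kinetic cost, and strictly increases the potential by \eqref{hyp2} and \eqref{eq:hyp3}; no use of \eqref{as:nablaR0} is needed here. From confinement the paper obtains regularity and strict concavity on $\Omega_0$ directly (Steps~3--4), with the explicit bound $D^2v\leq -2\min(\overline{L}_1,\sqrt{\overline{K}_1}/2)\,I$; your stated bound $D^2v\leq -2\overline{L}_1 I$ drops the contribution of $F$ and is not generally correct.

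There are two genuine gaps in the approximation strategy. First, \cite{Mirrahimi-Roquejoffre - 2016} treats the constrained problem in the form $F(t,z)=R(z)-I(t)$ with $I(t)$ an \emph{unknown} Lagrange multiplier fixed by the constraint; here $F(t,z)$ is prescribed and the constraint is a compatibility condition, so invoking that reference off-the-shelf for $(F^\delta,v_0^\delta)$ is not justified without redoing its analysis---which is exactly what the paper does. Second, and more seriously, you never show that the maximizer $\bar z^\delta(t)$ stays in $\Omega_0$. Confinement of \emph{trajectories} does not imply this; the paper needs a separate argument (its Step~6) based on the canonical equation $\dot{\bar z}=(-D^2v)^{-1}DF(t,\bar z)$ and the inequality $DF(t,\bar z)\cdot\dot{\bar z}\geq\lambda|DF(t,\bar z)|^2$, which together with \eqref{as:nablaR0} forces $F(t,\bar z(t))$ to increase along the flow and hence prevents $\bar z$ from reaching $\partial\Omega_0$. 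Without this step, the constraint need not be preserved under the approximation, the identification of the limit with $v$ is unjustified, and the implicit-function argument for $\bar z\in C^1$ cannot be applied since you do not yet know $\bar z(t)$ lies in the region where $v$ is $C^2$.
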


Note that we do not claim so far that solutions to problem~\eqref{eq:1} are unique. In order to establish uniqueness we need to ask something else to the zero-order term $F$; in our case, that $F$ depends on time only through the trajectory of the curve $\z(t)$, i.e., $F(t,z)=G(z,\z(t))$ as in \eqref{F-G}.

%The previous result holds for a fitness function $F$ with a general dependency on time. In several models from evolutionary biology, the fitness function depends on time either through the mass of the solution $\rho(t)$, as in problem~\eqref{eq:viscosity_u} when $\tau=0$, see~\cite{Mirrahimi-Roquejoffre - 2016}, or through the trajectory of the maximum point itself, as in problem~\eqref{eq:viscosity_u} if we assume that the solution is monomorphic, as we will see later on. \textcolor{blue}{It is this last formulation of the problem the one that we will find when studying our original problem~\eqref{eq:viscosity_u} in the particular case $R(z)=1-gz^2$ (i.e., Theorem~\ref{thm:limit_epsilon_non_concave}), so it makes sense to focus on the particular case where $v \text{ has a unique maximum point }\bar z (t)\in \overline \Omega_0$; the reaction term satisfies ,  and,
	
We assume that $G:\R^d\times \R^d\to \R$ is continuous function which satisfies 
\begin{equation}
\label{as:Gzz}
G(z,z)=0,\qquad \forall z\in \R^d.
\end{equation}
%\SM{{\bf Dear Alex, this continuity is important for some of the conditions to make sense (for instance (24))}}
Translating the hypothesis on $F$ onto $G$, we need that for all $\z \in \overline \Omega_0$,
\begin{equation}\label{eq:assumption-thm-2.2}
	- 2\underline{K}_1|x|^2 \leq x\cdot D^2 G(z,\z)\cdot x^T \leq - 2\overline{K}_1|x|^2 < 0, \quad \text{ for all }z\in \Omega_0 \text{ and } x\in\R^d,
\end{equation}
\begin{equation}
	\label{hyp2_G}
	\begin{array}{c}
		\forall x\in\R\setminus\Omega_0,\forall y\in  \Omega_0 \qquad G(x,\z)<G(y,\z),
	\end{array}
\end{equation}
\begin{equation}
	\label{as:nablaR0_G}
	D G(z,\z) \neq 0, \qquad \text{for all }z\in \p \Omega_0, 
\end{equation}
\begin{equation}
	\label{asRD23_G}
	\left| \f{\p G}{\p \z_i  }(z,\z)\right|+ \left| \f{\p^2 G}{\p \z_i \p z_j}(z,\z)\right| + \left| \f{\p^3 G}{\p \z_i \p z_{j} \p z_k}(z,\z)\right| \leq K_3, \ \text{for  $z\in \overline \Omega_0 $ and $i,\,j,\,k=1,\cdots,d$},
\end{equation}
and
\begin{equation}\label{asRD3_G}
	\|D^3G(\cdot, \z) \|_{L^\infty(\Omega_0)}\leq K_4,
\end{equation}
The symbols $D, D^2$ and $D^3$ refer to derivatives with respect to the variable $z$. Similarly, notice that when we write $D G(\z(t),\z(t))$ we refer only to the first set of variables, $z\in \R^d$; i.e.,
\[
D G\big(\z(t),\z(t)\big) = D G\big(z,\z(t)\big)\big|_{z=\z(t)}.
\]

 \begin{thm}\label{thm:section_Dyn_Prog_2}
Let $G$ satisfy~\eqref{as:Gzz}--\eqref{asRD3_G}.  Solving the constrained problem~\eqref{eq:1} is equivalent to solving the following ODE-PDE system
  \begin{equation}
\label{eq:3}
\begin{cases}
	\partial_t v(t,z) = |D v(t,z)|^2+G\big(z, \z(t)\big),& t\in [0, T],z\in\R^d,\\
	%G\big(\z(t),\z(t)\big)=0,  &\text{for all } t\in [0, T],\\
	\dot{\z}(t) = \big(-D^2v(t,\z(t))\big)^{-1}\  D G\big(\z(t),\z(t)\big),& t\in [0, T],\\
	v(0, z)=v_0(z),& z\in\R^d,\\
	\z(0)=z_0\in \Omega_0,
 \end{cases}
\end{equation}
with initial conditions satisfying
\begin{equation}\label{eq:initial_cond_v_G}
\max_{z\in\R} v_0(z) = v_0(z_{0}) = 0.
\end{equation}
Consequently, there exists a unique viscosity solution $v$ to~\eqref{eq:1}, with $F(t,z)=G(z,\z(t))$. This solution satisfies
\[
\max\limits_{z\in \R}v(t,z)=v(t,\z(t))=0,
\]
where $\z\in\overline\Omega_0$ is the unique maximum point of $v$ and, moreover, the couple of solutions $(v, \z)$ satisfy
\[
\begin{aligned}
	&(v|_{\Omega_0},\z) \in \big(L^\infty_{\rm loc} ([0,T] : W^{3,\infty}_{loc}( \Omega_0)) \cap C^1([0,T]\times\Omega_0)\big)\times C^1([0,T]),\\
	&D v\in C^1([0,T]\times\Omega_0) \quad \text{and} \quad v(t,\cdot)|_{ \Omega_0}\in C^2( \Omega_0).
\end{aligned}
\]
 \end{thm}

Note that since $G$ is not globally concave, $v$ is not globally concave and hence we cannot obtain a global regularity result for $v$. In particular, $(D^2 v)^{-1}$ is not defined everywhere and  such differential system cannot be well-defined in general. However, here
we prove that starting with an initial data such that $z_0\in \Omega_0$, and under the above assumptions on $G$, the above differential system will be well-defined for all $0\leq t\leq T$. A key argument is that any optimal trajectory $\gamma(\cdot):[0,T]\to \R$ appearing in the representation formula of the Hamilton-Jacobi equation \eqref{eq:1} such that  $\gamma(T)\in \Omega_0$, has always been in the set $\Omega_0$, that is $\gamma(t)\in \Omega_0$ for all $t\in[0,T]$.

%%%%%%%%
\subsection{Proof of Theorem \ref{thm:section_Dyn_Prog_1}}\label{sec:Dyn_Prog_1}
%%%%%%%%%%

The proof of theorem   \ref{thm:section_Dyn_Prog_1} is closely related to the method introduced in \cite{Mirrahimi-Roquejoffre - 2016}. However, here we have to deal with the difficulty that the fitness function is not globally concave.

Fix   $t\in [0, T]$ and $z\in \overline \Omega_0$. Then the value of the viscosity solution $v$ of the Hamilton-Jacobi equation in \eqref{eq:1} at point $(t,z)$ is given  by the dynamic programming principle: 
%{(To verify if OK for BV $I(t)$)}
$$
v(t,z) = \sup_{(\gamma(s),s)\in\R^d\times[0,t], \gamma(t)=z}\Big\{f_t(\gamma),\text{ where }\gamma\in W^{1,2}([0,t] : \R^d)\Big\},
$$
with
$$
f_t(\gamma)=v_0(\gamma(0))+\int_0^t\Big(-\frac{|\dot\gamma(s)|^2}4+F(s,\gamma(s))\Big)ds.
$$
A crucial element that allows us to adapt the proof from \cite{Mirrahimi-Roquejoffre - 2016} is that the optimal trajectories with ending points (at time $t$) in the set $\Omega_0$ have been in the set $\Omega_0$ for all $s\in [0,t]$.

\textbf{Step 1. Existence  of an optimal trajectory.}

 Let us take a sequence $(\gamma_n)_{n\geq1}$, with $\gamma_n\in W^{1,2}([0,t] : \R^d)$ and $\gamma_n(t)=z$ such that $f_t(\gamma_n)\rightarrow v(t,z)$. Since $F$ and $v_0$ are bounded from above  and $v(t,z)$ is well defined, there is a constant $C=C(t,z)$ such that
$$
 \int_0^t|\dot\gamma_n|^2\leq C.
$$
%notice that this constant $C$ might depend on $x$ and $t$ but they are fixed, thus can be seen as parameters.
Notice also that for any $s,s'\in[0,t]$ it holds that
\begin{equation}\label{eq:bound_size_trajectory}
 |\gamma_n(s)-\gamma_n(s')|\leq \int_0^t|\dot\gamma_n(r)|dr\leq \sqrt{t}\Big( \int_0^t|\dot\gamma_n(r)|^2dr\Big)^{1/2}\leq C\sqrt{t},
\end{equation}
therefore $(\gamma_n)$ is a 1/2-Holder continuous family of functions. Consequently, using that $\gamma_n(t)=z$, we deduce that
$$
\|\gamma_n\|_{W^{1,2}[0,t]}\leq C,
$$
taking a larger $C$ if necessary. In particular, there is some $\overline\gamma\in W^{1,2}([0,t] : \R^d)$ such that $\gamma_n\xrightarrow{n\rightarrow\infty}\overline\gamma$ strongly in $C([0,t] : \R^d)$ and weakly in $W^{1,2}([0,t] : \R^d)$. In particular
$$
v_0(\gamma_n(0))\rightarrow v_0(\overline\gamma(0)),\quad \int_0^tF(s,\gamma_n(s))ds\rightarrow\int_0^t F(s,\overline\gamma(s))ds
$$
and
$$
\int_0^t|\dot{\overline\gamma}(s)|^2ds\leq\liminf_{n\rightarrow\infty}\int_0^t|\dot\gamma_n(s)|^2ds.
$$
Thus,
\begin{equation}
\label{eq:overgamma}
 v(t,x)=v(\overline\gamma(0))+\int_0^t\Big(-\frac{|\dot{\overline\gamma}(s)|^2}4+F(s,\overline\gamma(s))\Big)ds.
\end{equation}

\textbf{Step 2. Uniqueness of the optimal trajectory in the set $\overline \Omega_0$; optimal trajectories remain in the set $\overline \Omega_0$.}

Let $z\in \overline \Omega_0$. Let $\overline \gamma$ be an optimal trajectory such that $\overline \gamma(t)=z$. We prove that such optimal trajectory has always been inside the set $\overline \Omega_0$. In other words, trajectories can leave but not enter, the set $\Omega_0$. We prove this by contradiction. Let's suppose that  {there exists} $s_1\in [0,t)$ such that $\overline \gamma (s_1)\in \R^d\setminus \overline \Omega_0$, and define 
$$
{t_1=\inf \{s\in [s_1,t] \,:\, \overline \gamma(s)\in \overline \Omega_0 \}.}
$$
Note that by our assumption the infimum above is taken over a non-empty set. Define also
$$
\mathcal A= \{s\in [0,s_1] \,:\, \overline  \gamma(s)\in  \overline \Omega_0 \}.
$$
If this set is non-empty we define
$$
t_0=\sup \{s\in\mathcal A \}.
$$
By definition, $t_0<s_1<t_1$ and for all $s\in (t_0,t_1)$, $ \overline \gamma(s)\in \R^d \setminus \overline \Omega_0$.
 
If the set $\mathcal A$ is empty we define  a new trajectory $\widetilde \gamma \in W^{1,2}([0,T] : \R^d)$ as below
$$
\begin{cases}
\widetilde \gamma(s)=\bar \gamma(t_1)& \text{for $s\in [0,t_1]$},\\
\widetilde \gamma(s)=\bar \gamma(s)& \text{for $s\in (t_1,t]$}.
\end{cases}
$$

If $\mathcal A$ is non-empty we define
$$
\begin{cases}
\widetilde \gamma(s)=\overline \gamma(s)& \text{for $s\in [0,t_0]$},\\
\widetilde \gamma(s)=(\frac{t_1-s}{t_1-t_0})\overline \gamma(t_0)+(\frac{s-t_0}{t_1-t_0})\overline \gamma(t_1)& \text{for $s\in (t_0,t_1]$}, \\
\widetilde \gamma(s)=\overline \gamma(s)& \text{for $s\in (t_1,t]$}.
\end{cases}
$$

Note that from the convexity of $\Omega_0$ we deduce the convexity of the set $[0,t]\times \Omega_0$, and thus $\widetilde \gamma(s)\subset \overline \Omega_0$ for all $s\in[t_0,t_1]$. Note also that in both of the above cases, using $v_0(\widetilde\gamma(0))\geq v_0(\bar\gamma(0))$ from assumption \eqref{eq:hyp3},  since the straight lines are local maximizers of the functional
\[
\int_0^t-\frac{|\dot{\overline\gamma}(s)|^2}4ds,
\]
and thanks   to assumption \eqref{hyp2}, we obtain $f_t(\overline \gamma)<f_t(\widetilde \gamma)$ which is in contradiction with the optimality of $\overline \gamma$. We hence obtain that $\overline \gamma(s) \subset \overline \Omega_0$ for all $s\in [0,t]$.

We next prove that such optimal trajectory $\overline\gamma$ is unique. To this end, we note
 that such trajectory $\overline\gamma$ satisfies the following Euler-Lagrange equation
\begin{equation}
\label{eq:EL}
\begin{cases}
 \ddot{\overline\gamma}(s) = -2D F(s,\overline\gamma(s)),\\
 \dot{\overline\gamma}(0)=-2D v_0(\overline\gamma(0)),\\
  \overline\gamma(t)=z.
 \end{cases}
\end{equation}
Moreover, $\gamma (s) \subset \overline \Omega_0$ for all $s\in [0,t]$ and hence $F$ and $v_0$ are strictly concave functions at the above points. Therefore, the above elliptic equation is coercive and the solution $\overline \gamma(s)$ is unique.
\\
 
\textbf{Step 3. Local regularity.}

 Take any $(t,z)\in  [0,T]\times\Omega_0$ and define $\gamma_z(\cdot):[0,T]\to \R^d$ the unique solution of~\eqref{eq:EL} which is itself, an optimal trajectory for the maximization problem. Note that thanks to Step 2, $\gamma_z(s)\in \overline{\Omega}_0$, for all $s=[0,T]$.  Using the regularity of $F$ and $v_0$, it follows that the functions
$$
(t,z)\rightarrow \gamma_z(t),\qquad (t,z)\rightarrow \dot\gamma_z(t)
$$
belong to $L^\infty_{loc}\big((\R^+\times\R^d) : W^{2,\infty}_{loc}(  \Omega_0)\big)$. Now, from~\eqref{eq:overgamma}, we have
$$
 v(t,x)=v_0(\gamma_x(0))+\int_0^t\Big(-\frac{|\dot\gamma_x(s)|^2}4+F(s,\gamma_x(s))\Big)ds,
$$
yielding
$$
 \partial_i v(t,x) = D v_0(\gamma_z(0))\cdot\partial_i\gamma_z(0)+\int_0^t\Big(-\frac{\dot\gamma_z(s)\cdot\partial_i\dot\gamma_z(s)}2+D F(s,\gamma_x(s))\partial_i\gamma_z(s)\Big)ds,
$$
integrating by parts in the first term of the integral, using $\int_0^t u {\rm d}v=[uv]_0^t - \int_0^t v{\rm d}u$ with $u(s)=-\dot{\gamma}_z(s)$ and $v=\partial_i \gamma_z(s)$, we obtain
$$
\begin{aligned}
\partial_i v(t,x) =& D v_0(\gamma_z(0))\cdot\partial_i\gamma_z(0) - \frac{\dot{\gamma}_z(t)\partial_i \gamma_z(t)}{2} + \frac{\dot{\gamma}_z(0)\partial_i \gamma_z(0)}{2}\\
&+\int_0^t\Big(\frac{\ddot\gamma_z(s)\cdot\partial_i\gamma_z(s)}2+D F(s,\gamma_x(s))\partial_i\gamma_z(s)\Big)ds.
\end{aligned}
$$
Using now~\eqref{eq:EL} and
\[
\partial_i \gamma_z(t)= (0,..., \underbrace{1}_{i-th},...,0),\quad\text{since}\quad \gamma_z(t)=z,
\]
it follows that
\[
D v(t,z) = -\frac{\dot\gamma_z(t)}2.
\]
In essence, the function $v$ restricted to $\Omega_0$ belongs to $L^\infty_{loc}(\R^+ : W^{3,\infty}( \Omega_0)\big)$. Finally, thanks to the compact embedding of $W^{2,\infty}(\Omega_0)$ in $C^1(\Omega_0)$ and the continuous differentiability of $\dot\gamma_z$ with respect to $t$ and $z$, we have that $D v \in C^1([0,T]\times \Omega_0)$.\\

\textbf{Step 4. Strong concavity.}

 We prove that $v$ is strictly concave in the set $\R^+\times \overline \Omega_0$. To this end, we show that for all $\sigma\in[0,1]$ and all $x,y\in \overline\Omega_0$ it holds that
$$
\sigma v(t,x)+(1-\sigma)v(t,y)+\lambda\sigma(1-\sigma)|x-y|^2\leq v\big(t,\sigma x+(1-\sigma)y\big),
$$
with $\lambda$ to be fixed later. This will lead to an upper bound for the second derivative of $v$ that is strictly negative and which depends only on $D^2v_0$ and $F$.

Let $\gamma_x$ and $\gamma_y$ be the optimal trajectories, solving~\eqref{eq:EL} with $\gamma_x(t)=x$ and $\gamma_y(t)=y$.   Using~\eqref{eq:overgamma} we have% for $\gamma_x$, $\gamma_y$ and for $\gamma_{\sigma x+(1-\sigma)y}$,
%we find that
\begin{equation}
\nonumber
 v(t,x)=v_0(\gamma_x(0))+\int_0^t\Big(-\frac{|\dot{\gamma}_x(s)|^2}4+F(s,\gamma_x(s))\Big)ds,
\end{equation}
and
\begin{equation}
\nonumber
 v(t,y)=v(\gamma_y(0))+\int_0^t\Big(-\frac{|\dot{\gamma}_y(s)|^2}4+F(s,\gamma_y(s))\Big)ds.
\end{equation}
Take $\sigma\in[0,1]$. Using~\eqref{eq:overgamma} at the point $(t,\sigma\gamma_x(t)+(1-\sigma)\gamma_y(t))=(t,\sigma x+(1-\sigma)y)$ we obtain that  
\begin{multline*}
 v(t,\sigma x+(1-\sigma)y)\geq v_0\big(\sigma\gamma_x(0)+(1-\sigma)\gamma_y(0)\big)
 \\
 +\int_0^t\Big(-\frac{|\sigma\dot\gamma_x(s)+(1-\sigma)\dot\gamma_y(s)|^2}4+F(s,\sigma\gamma_x(s)+(1-\sigma)\gamma_y(s))\Big)ds.
\end{multline*}
Now we use the fact that, thanks to Step 2,   for all $s\in [0,t]$, $\gamma_x(s)$ and $\gamma_y(s)$ lie inside $\overline\Omega_0$. Moreover, since both $v_0$ and $F$ are strictly concave on that subset, thanks to~\eqref{hyp1} and~\eqref{as:u0concave}, it follows that, for a certain positive constant $\overline L_1$,
$$
\sigma v_0(\gamma_x(0))+(1-\sigma)v_0(\gamma_y(0))+\overline L_1\sigma(1-\sigma)|\gamma_x(0)-\gamma_y(0)|^2\leq v_0(\sigma \gamma_x(0)+(1-\sigma)\gamma_y(0)).
$$
and on the other hand, for a certain positive constant $\overline K_1$,
\begin{multline*}
 \sigma\int_0^tF(s,\gamma_x(s))ds+(1-\sigma)\int_0^tF(s,\gamma_y(s))ds+ \overline K_1\sigma(1-\sigma)\int_0^t|\gamma_x(s)-\gamma_y(s)|^2ds\\
 \leq \int_0^tF(s,\sigma\gamma_x(s)+(1-\sigma)\gamma_y(s))ds.
\end{multline*}
And combining both inequalities, we get that
\begin{multline*}
 v(t,\sigma x+(1-\sigma)y)\geq \sigma v(t,x)+(1-\sigma)v(t,y)+\overline L_1\sigma(1-\sigma)|\gamma_x(0)-\gamma_y(0)|^2
 \\
 +
 \int_0^t\Big(-\frac{|\sigma\dot\gamma_x(s)+(1-\sigma)\dot\gamma_y(s)|^2}4+\sigma(1-\sigma)\overline K_1|\gamma_x(s)-\gamma_y(s)|^2\Big)ds
 \\
 +\sigma\int_0^t\frac{|\dot{\gamma}_x(s)|^2}4ds+(1-\sigma)\int_0^t\frac{|\dot{\gamma}_y(s)|^2}4ds.
\end{multline*}

 Finally, recalling that the map $-|\cdot|^2$ is strictly concave, we deduce that
\begin{multline*}
 v(t,\sigma x+(1-\sigma)y)\geq \sigma v(t,x)+(1-\sigma)v(t,y)
 \\
 +
 \sigma(1-\sigma)\Big[\int_0^t\Big(\frac14|\dot\gamma_x(s)-\dot\gamma_y(s)|^2+\overline K_1|\gamma_x(s)-\gamma_y(s)|^2\Big)ds+\overline L_1|\gamma_x(0)-\gamma_y(0)|^2\Big].
\end{multline*}
To conclude, we use Young's inequality (also known as Peter--Paul inequality) to notice that
\begin{multline*}
 |x-y|^2   =  |\gamma_x(0)-\gamma_y(0)|^2+\int_0^t\frac{d}{ds}|\gamma_x(s)-\gamma_y(s)|^2ds
 \\
 \leq|\gamma_x(0)-\gamma_y(0)|^2+{4\lambda}\int_0^t|\gamma_x(s)-\gamma_y(s)|^2ds+\frac{1}{4\lambda}\int_0^t|\dot\gamma_x(s)-\dot\gamma_y(s)|^2ds,
\end{multline*}
for all positive $\lambda$. We then choose  $\lambda=\min\big(\overline L_1,\sqrt{\overline K_1}/2\big)$ to obtain that
$$
\sigma v(t,x)+(1-\sigma)v(t,y)+\lambda\sigma(1-\sigma)|x-y|^2\leq v\big(t,\sigma x+(1-\sigma)y\big).
$$
We conclude, thanks to the regularity of $v$, that
\[
x\ D^2 v(t,z)\ x^T\leq -2\lambda |x|^2\quad\text{for all } (t,z)\in [0,T]\times\Omega_0,\ x\in \R^d.
\]

\textbf{Step 5. Semi-convexity.}

Let $E\subset \R^d$ be a bounded domain. Then there exists a positive constant $S_c(E, \underline{K}_1, \underline{L}_1)$ such that for all $t\in [0,T]$ and for any $x\in\R^d$,
$$
x \, D^2v(t,z)\,  x^T\geq -S_c|x|^2\quad\text{for all}\quad z \in E,
$$
where $\underline{K}_1, \underline{L}_1$ come from hypothesis~\eqref{hyp1} and~\eqref{as:u0concave}.

We will employ the Bernstein Method, and in order to do so we start with a viscous version of the problem~\eqref{eq:1}, considering a slightly modified equation
\begin{equation}\label{eq:viscous_v_convexity}
\partial_t v = \varepsilon\Delta v  + 
 |D v|^2+F(t,z),\quad   t\in [0, T],z\in\R^d.
\end{equation}
Notice that this solution $v$ will depend on $\varepsilon$ but the semi-convexity will be uniform.\\

Let us consider a closed set $E'$ such that $E\subset E'\subset \R^d$ and $\mathrm{dist}(E,\p E')>1$, differentiate twice equation~\eqref{eq:viscous_v_convexity} with respect to the variables $z_i$ and $z_j$, and then multiply it by the square of a cut-off function $\varphi$ satisfying
$$
\varphi(t,z)=\frac{t+1}{T+1}\phi(z),
$$
where $\phi$ is a positive function such that, for all $\varepsilon\in (0,1)$,
\[
\begin{aligned}
	&\text{(i) }0\leq \phi\leq 1, \phi\equiv 1\text{ for all }z\in E, \phi\equiv 0\text{ for all }z\not\in E',\\
	&\text{(ii) } \|Dv \cdot D\phi\|_{L^\infty(E')}< 1/4,\\
	&\text{(iii) }  \phi'' - 2  \frac{(\phi')^2}{\phi}\in [-1,1].
\end{aligned}		
\]

If we define $v^i:=\partial_{z_i} v$ and $v^{i,j}:=\partial^2_{z_i z_j}v$ we obtain
$$
v^{i,j}_t\varphi^2 =\varepsilon \Delta v^{i,j}\varphi^2+2Dv^i \cdot Dv^j\varphi^2+2Dv \cdot Dv^{i,j}\varphi^2+F^{i,j}\varphi^2,
$$
which we can write down as
\[
\partial_t (D^2v)\varphi^2 =\varepsilon \mathcal{L}(v)\varphi^2+2\mathcal{D}(v)\varphi^2+2\mathcal{M}(v)\varphi^2+D^2F\varphi^2,
\]
where we defined the matrices $\mathcal{L}(v), \mathcal{D}(v)$ and $\mathcal{M}(v)$ taking their entries as
\[
\mathcal{L}(v)_{i,j}:= \Delta v^{i,j},\quad \mathcal{D}(v)_{i,j}:= Dv^{i} \cdot Dv^j\quad\text{and}\quad \mathcal{M}(v)_{i,j}:= Dv\cdot Dv^{i,j}.
\]
We choose now an arbitrary vector $x\in\R^d$ and multiply the equation by $x$ by the left and by $x^T$ (the transposed vector) by the right, obtaining
\begin{equation}\label{eq:bernstein_with_matrices}
	\begin{aligned}
	\left[x\, \partial_t (D^2v)\, x^T\right]\varphi^2 =&\varepsilon \left[x\,\mathcal{L}(v)\,x^T\right]\varphi^2+2\left[x\,\mathcal{D}(v)\, x^T\right]\varphi^2+2\left[x\,\mathcal{M}(v)\, x^T\right]\varphi^2\\[10pt]
	&+\left[x\, D^2F\, x^T\right]\varphi^2.
	\end{aligned}
\end{equation}

After this, we define
\[
w(t,z):=  \left[x\, D^2v(t,z)\, x^T\right] \varphi(t,z).
\]

If $w$ attains its minimum at $t=0$, then, by assumption~\eqref{as:u0concave}
\[
w(t,z)\geq w(0,z)= \left[x\, D^2v_0(z) x^T\right]\varphi(0,z)\geq -2\underline{L}_1|x|^2\phi(z)\geq -2\underline{L}_1|x|^2
\]
and the claim is proven. If not, then at a point of minimum of $w$ (which must be attained inside the support of $\varphi$) we must have $w_t\leq 0,Dw=0$ and $\Delta w\geq 0$. Rewriting   this   in terms of $v$ and $\varphi$ we obtain
\[
\begin{aligned}
\left[x \,\partial_t (D^2v)  \,x^T\right]\varphi&\leq -\left[x\,(D^2v)\, x^T\right]\partial_t\varphi,\\[10pt]
 D\left(\left[x\,  (D^2v)\,  x^T\right]\right)\varphi &= - \left[x\, (D^2v)\, x^T\right]D\varphi, \\[10pt]
 \left[x \, \mathcal{L}(v) \, x^T\right]\varphi &\geq \left[x\, (D^2v)\, x^T\right]\left(2\frac{|D\varphi|^2}{\varphi} - \Delta\varphi\right).
\end{aligned}
\]
We next use the second equation above to obtain that 
\[
 \left[x\, \mathcal{M}(v)\, x^T\right]\varphi  = -\left[x\, (D^2v)\, x^T\right]Dv \cdot D\varphi.
\]
We also compute, using the symmetry of the matrices $(x^T \, x)$ and $D^2v$, %{\bf(I changed a little bit the argument since I did not agree with your computations)}
\[
\begin{aligned}
\left[x \, (D^2v) \, x^T\right]^2 &= \left[x \, (D^2v) \, (x^T \, x) \, (D^2v) \, x^T\right]\\[10pt]
&= \left[x \, (D^2v) \, (D^2v) \, (x^T\, x) \, x^T\right]\\[10pt]
&= \left[x \, (D^2v)^2 \,  x^T\, (x \cdot x^T)\right] .
\end{aligned}
\]
Next we use the identities
\[
(D^2v)^2 = \mathcal{D}(v)\quad\text{and}\quad x\cdot x^T=|x|^2 ,
\]
to obtain that 
\[
\left[x \, (D^2v) \, x^T\right]^2=|x|^2\left[x \, (D^2v)^2 \, x^T\right] = |x|^2\left[x\ \mathcal{D}(v)\ x^T\right].
\]
We deduce that
\[
 \left[x \, \mathcal{D}(v) \, x^T\right] =\left[x (D^2v)\, x^T\right]^2|x|^{-2} .
\]
Substituting all this into equation~\eqref{eq:bernstein_with_matrices} we obtain
$$
-w\varphi_t \geq - \varepsilon w\Delta\varphi+2 \varepsilon w\frac{|D\varphi|^2}{\varphi}+2w^2|x|^{-2}-2wDv \cdot D\varphi+\left[x\cdot D^2F\cdot x^T\right]\varphi^2.
$$
We can rearrange here, recalling that by hypothesis~\eqref{hyp1} the last term is bounded below by a negative constant $-2\underline{K}_1$, to obtain
$$
2w^2|x|^{-2}\leq \left(-\varphi_t +\varepsilon \Delta\varphi - 2 \varepsilon \frac{|D\varphi|^2}{\varphi}+ 2Dv\cdot D\varphi\right)w + 2\underline{K}_1|x|^2.
$$
%In the particular case where $i=j$ we have $Dv^iDv^j\varphi^2=|Dv^i|^2\varphi^2\geq w^2$, arriving to 
%$$
%2w^2\leq \left(-\varphi_t +\varepsilon \Delta\varphi - 2 \varepsilon \frac{|D\varphi|^2}{\varphi}+ 2DvD\varphi\right)w + 2\underline{K}_1.
%$$

With the properties of $\varphi$ in mind one can see that, at least for $\varepsilon$ small enough,
$$
2w^2\leq f(t)|x|^2 w + 2\underline{K}_1|x|^4\quad\text{with}\quad f(t)\geq -1-\frac{1}{T+1},
$$
implying that $w\geq |x|^2(f-\sqrt{f^2+16\underline{K}_1})/4\geq -C(E, \underline{K}_1)|x|^2$ for a certain positive constant $C$. Therefore
\[
w(t,z)\geq \min\{ -2\underline{L}_1,\ -C\}|x|^2
\]
We define $S_c(E, \underline{K}_1, \underline{L}_1)$ to be the minimum above. The result follows for a fixed $\varepsilon$ since $\varphi(T, z)=1$ for all $z\in E$, and then we let $\varepsilon\to 0$.

\textbf{Step 6. $v(t,\cdot)$ has a local maximum in $\Omega_0$.}

From Step 4 we know that $v(t,\cdot)$ is strictly concave in $\Omega_0$. Consequently, for all $t\in [0,T]$, $v(t,\cdot)$ has a local maximum point $\z(t)$ in  $\overline \Omega_0$. Moreover, the continuity of $v$ implies that $\bar z$ is also continuous. We prove by contradiction that for all $t\in [0,T]$, we have indeed $\z(t) \in   \Omega_0$.  Define $t_1$ as the first time  in $[0,T]$ such that $\bar z(t)\in \partial \Omega_0$. Thanks to~\eqref{eq:hypz0} and the continuity of $\z$ we obtain that $t_1>0$. We show that   $t_1$ cannot exist.

We use the regularity of $v$ in $\Omega_0$ obtained in Step 3 to differentiate the equation in~\eqref{eq:1} with respect to $z$ and evaluate it at $\z(t)$: 
$$
\partial_t( {D} v)(t,\z(t))=2 D^2 v(t,\z(t))  \,{D}  v(t,\z(t))  + {D} F(t,\z(t)).
$$
We also note that since $\z(t)$ is a maximum point of $v$, we have
\begin{equation}
\label{nablau0-1}
{D} v(t,\z(t)) =0,
\end{equation}
and hence 
$$
\partial_t( {D} v)(t,\z(t))= {D} F(t,\z(t)).
$$
We next differentiate~\eqref{nablau0-1} with respect to $t$ to obtain
$$
\partial_t({D} v)(t,\z(t))+D^2 v(t,\z(t))\, \dot{\z}(t) =0.
$$
We then combine the  equalities above and the strict concavity of $v$ in $\Omega_0$ to obtain 
\begin{equation}
\label{can-eq}
 \dot{\z}(t) = \big(-D^2 v(t,\z(t))\big)^{-1}\, {D} F\big(t,\z(t)\big), \qquad \forall t\in [0,t_1).
\end{equation}
Now let's assume that $t_1\leq T$, with $\z(t_1) \in \p \Omega_0$. We multiply~\eqref{can-eq} by ${D} F\big(t,\z(t)\big)$, for $t\in [0,t_1)$, to obtain, thanks to the strict concavity of $v$ in $\overline \Omega$,   that there exists $\lambda>0$ such that 
$$
{D} F\big(t,\z(t)\big) \cdot\dot{\z}(t) \geq \lambda |{D} F\big(t,\z(t)\big)|^2,  \qquad \forall t\in [0,t_1).
$$
We then notice  that since $\z \in C([0, T] : \overline \Omega_0)$,   ${D} F\big(t,\z(t)\big)$ is a continuous function. Therefore, thanks to~\eqref{as:nablaR0} and the continuity of $\dot{\z}$, there exists $t_2$ and a $\nu>0$ such that $0<t_2<t_1$ and
$$
 {D} F\big(t_2,\z(t)\big)\cdot \dot{\z}(t) \geq \nu,  \qquad \forall t\in [t_2,t_1).
$$
We then integrate the above inequality with respect to $t$ in the interval $(t_2,t_1)$ to obtain
$$
F\big( t_2 ,\z(t_1)\big)- F\big( t_2 ,\z(t_2)\big)  \geq \nu(t_1-t_2)>0. 
$$
This is in contradiction with assumption~\eqref{hyp2}  and the facts that $\z(t_1)\in \p \Omega_0$ and $\z(t_2)\in \Omega_0$.

We highlight that, up to this point, we have not made use of the constraint $\max\limits_{z\in\R} v =0$ from~\eqref{eq:1}.

\textbf{Step 7. The unique  local maximum $\z(t)$ of $v(t,\cdot)$ in $\Omega_0$ is indeed a global maximum.}

 We have already proven that, for all $t\in [0, T]$, $v(t,\cdot)$ has a unique local maximum point in $\overline \Omega_0$ attained at $\z(t)\in C^1([0,T] : \Omega_0)$, though we might run into the case where $v(t,\z(t))<0$ for some $t\in [0,T]$. In this case, in view of the second equality in \eqref{eq:1}, there must exist  $t_3\in [0,T]$ and a point $\bar{w}(t_3)\in \R^d\setminus\Omega_0$   such that
 \[
 v(t, \bar{z}(t_3))<0  \quad\text{and}\quad v(t, \bar{w}(t_3))=0.
 \]  
 Then, since $v$ is in particular a viscosity subsolution of~\eqref{eq:1} and
 \[
 \max_{(t,z)\in[0,T]\times \R} v(t, z)=v(t_3, \bar w(t_3))=0,
 \]
 we can choose a constant test function $\varphi\equiv 0$ to apply the subsolution criterion at $(t_3, \bar w(t_3))$.  We obtain indeed that 
\[
{0=}\partial_t\varphi(t_3, \bar w(t_3)) - |\partial_z \varphi(t_3, \bar w(t_3))|^2\leq F(t_3, \bar w(t_3)).
\]
{Consequently} and thanks to~\eqref{hyp2}, we obtain 
\begin{equation}
\label{ineqt3}
0\leq F(t_3,\bar w(t_3)) < F(t_3,\z(t_3)).
\end{equation}
Furthermore, using the regularity of $v$ in $[0,T]\times \Omega_0$ we can  evaluate the first equation in \eqref{eq:1} at $\z(t_3)$ to obtain that 
$$
F(t_3,\z(t_3))=0
$$
which is in contradiction with \eqref{ineqt3}.

Note however that we still have the possibility of having a second simultaneous maximum point outside $\Omega_0$, but we will prove it false on the next step.

\textbf{Step 8. The maximum of $v(t,\cdot)$ is attained only at the point $\overline z(t)\in \Omega_0$.}

By the strict concavity of $v(t,\cdot)$ in $\Omega_0$ and the fact that $\z (t)\in \Omega_0$ we deduce that there is no other maximum point in $\overline \Omega_0$. We prove that this also holds in $\R^d\setminus\overline \Omega_0$. Let us argue by contradiction. Let $t_1\in (0, T]$ be the smallest time such that there exists a point $w\in \R^d\setminus\overline \Omega_0$ such that $v(t_1, w)=0$, and let $\gamma_{w}$ be any continuous curve (which may not be unique) such that
\begin{equation}
\label{DP-v}
v(t_1, w)=v_0(\gamma_{w}(0))+\int_0^{t_1}\Big(-\frac{|\dot\gamma_{w}(s)|^2}4+F(s,\gamma_{w}(s))\Big)ds.
\end{equation}
Since $v$ and $\gamma_{w}$ are continuous, it means that there must exist a time $t_0$ such that for all $t\in [t_0, t_1]$, $\gamma_{w}(t)\in \R^d\setminus\overline\Omega_0$ and $v(t_0,\gamma_{w}(t_0))<0$. Moreover, since $F(s,\bar z(s))=0$ for all $s\in [t_0, t_1]$,  and in view of Assumption \eqref{hyp2}, we obtain that for all $s\in [t_0, t_1]$,
\[
F(s,\gamma_{w}(s))<0.
\]
Then, using \eqref{DP-v}  
we obtain that $v(t_1, w)<0$, a contradiction.

{\bf Step 9. Bounds on $D^3 v$.}

We prove that $D^3 v$ is bounded uniformly in $x\in  \Omega_0$ and locally in $t$. 
The proof of this bound follows from similar arguments to  \cite{Mirrahimi-Roquejoffre - 2016}. We set $w(t,x)=D^3 v(t,x)$, which solves
\[
\p_t w-2D w\cdot D v=S(t,x,w)=6w\cdot D^2v +D^3F(s,x),
\]
where $D w$ denotes the column of tensors $(\p_1 w,\dots,\p_d w)$ and $w\cdot D^2 v$ denotes the column of matrices $(\p_1 D^2 v\cdot D^2v,\dots, \p_d D^2 v\cdot D^2 v)$. This is a linear transport equation with bounded coefficients in $[0,T]\times \overline \Omega_0$ (thanks to the bound on $D^2 v$). Moreover, the characteristics corresponding to this equation with ending point in $[0,T]\times \overline{\Omega}_0$ have always remained in $\overline{\Omega}_0$ thanks to step 2 (i.e. $\gamma(s)\in \Omega_0$, for all $s\in [0,T]$). Hence the desired bound on $\|D^3 v\|_{L^\infty_{\mathrm{loc}}(\R^+\times\Omega_0)}$.

%%%%%%%%%%%%%%%%%%%%%%%%%%%%%%%%%
\subsection{Proof of Theorem \ref{thm:section_Dyn_Prog_2}}\label{sec:Dyn_Prog_2}
%%%%%%%%%%%%%%%%%%%%%%%%%%%%%%%%%

Theorem \ref{thm:section_Dyn_Prog_2} can be proven thanks to the properties obtained in the previous section and following similar arguments as in~\cite{Mirrahimi-Roquejoffre - 2016}. We only comment the proof of the uniqueness of the solution to \eqref{eq:1}, with $F(t,z)=G(z,\z(t))$, which requires an additional small argument. The main ingredient for the uniqueness of the viscosity solution to \eqref{eq:1} is indeed the equivalence of \eqref{eq:1} with  \eqref{eq:3}. Then, the uniqueness follows using a fixed point argument which relies on the following technical lemma.	
\begin{lem}
Let $v_i$, for $i=1,2$, be the viscosity solution to
\[
\begin{cases}
\p_t v_i(t,z)=|Dv_i(t,z)|^2+G(z,\z_i(t)),& (t,z)\in (0,T)\times \R^d \\
\dot\z_i(t)=(-D^2v_i(t,\z_i(t)))^{-1}\ DG_i(\z_i(t), \z_i(t)),& t\in (0,T),\\
v_i(0,z)=v_{0,i}(z), & z\in \R^d,\\
\z_i(0)=z_{0,i}\in\Omega_0.
\end{cases}
\]
Then, we have
\[
\|v_1-v_2\|_{L^\infty((0,t);W^{3,+\infty}(\Omega_0)}\leq C\|\z_1-\z_2\|_{L^\infty([0,t])}+C|\|v_{0,1}-v_{0,2}\|_{ W^{3,+\infty}(\Omega_0)}.
\]
\end{lem}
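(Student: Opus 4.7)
The plan is to exploit the Dynamic Programming representation from Theorem~\ref{thm:section_Dyn_Prog_1} together with classical ODE stability for the associated Euler-Lagrange equation. Treating $\z_i(t)$ as a given input to the Hamilton-Jacobi equation (the coupling via $\dot{\z}_i=(-D^2v_i(t,\z_i))^{-1}DG$ is not needed for this estimate), we may write
\[
v_i(t,z) \;=\; \sup_{\gamma(t)=z}\Big\{v_{0,i}(\gamma(0)) + \int_0^t\Big(-\tfrac{|\dot\gamma(s)|^2}{4} + G(\gamma(s),\z_i(s))\Big)\d s\Big\}.
\]
Using the optimizer $\gamma_{1,z}$ of $v_1$ as a test curve for $v_2$, combined with the symmetric inequality and the uniform bound $|\partial G/\partial \bar z|\leq K_3$ from~\eqref{asRD23_G}, yields the $L^\infty$ component of the estimate with a constant depending linearly on $T$.

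For the first derivative I would use the identity $Dv_i(t,z) = -\dot\gamma_{i,z}(t)/2$ obtained in Step~3 of the proof of Theorem~\ref{thm:section_Dyn_Prog_1}, reducing the problem to comparing the optimal trajectories. These satisfy the Euler-Lagrange system
\[
\ddot\gamma_{i,z}(s) = -2D_z G(\gamma_{i,z}(s),\z_i(s)),\quad \dot\gamma_{i,z}(0) = -2Dv_{0,i}(\gamma_{i,z}(0)),\quad \gamma_{i,z}(t) = z,
\]
and, by Step~2 of Theorem~\ref{thm:section_Dyn_Prog_1}, each trajectory stays in $\overline\Omega_0$, where $D^k G$ is uniformly bounded for $k\leq 3$ thanks to~\eqref{asRD23_G}--\eqref{asRD3_G}. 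Setting $\eta(s):=\gamma_{1,z}(s)-\gamma_{2,z}(s)$ and subtracting the two ODEs produces a linear second-order equation for $\eta$ with bounded coefficients, forcing controlled by $\|\z_1-\z_2\|_{L^\infty}$, and with boundary/initial data controlled by $\|v_{0,1}-v_{0,2}\|_{W^{1,\infty}(\Omega_0)}$. A standard Gronwall argument then gives a $C^1$-bound for $\eta$ on $[0,t]$, which, evaluated at $s=t$, yields the bound for $\|Dv_1-Dv_2\|_{L^\infty}$.

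To pass to the full $W^{3,\infty}$ bound, I would iterate the scheme by differentiating the trajectory map $z\mapsto\gamma_{i,z}$ in $z$ once and twice. This produces linear variational ODEs for $\partial_z\gamma_{i,z}$ and $\partial^2_{zz}\gamma_{i,z}$, whose coefficients depend on $D^k G$ and $D^k v_{0,i}$ for $k\leq 3$ and are again uniformly bounded on $\overline\Omega_0$. At each level, subtracting the variational equations of the two trajectory maps, plugging in the bound for the lower-order difference already obtained, and applying Gronwall produces the corresponding bound for $\partial^\alpha_z v_1-\partial^\alpha_z v_2$ up to $|\alpha|=3$, in terms of $\|\z_1-\z_2\|_{L^\infty}$ and $\|v_{0,1}-v_{0,2}\|_{W^{3,\infty}(\Omega_0)}$.

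The principal obstacle is the bookkeeping in this iteration: each differentiation of the variational ODE introduces cross-terms of the form $(D^m G)(\gamma_{i,z}(s),\z_i(s))\cdot(\partial^\beta_z\gamma_{i,z}(s))$ for various multi-indices $\beta$, as well as terms involving $D^m v_{0,i}(\gamma_{i,z}(0))$ through the boundary data, and one must check that all of them combine to depend only linearly on $\|\z_1-\z_2\|_{L^\infty}$ and $\|v_{0,1}-v_{0,2}\|_{W^{3,\infty}(\Omega_0)}$. A secondary subtlety is that although the trajectories live in $\overline\Omega_0$ and $G$ is locally concave there, neither the strict concavity of $v_i$ nor the invertibility of $D^2v_i$ is directly invoked in the proof; these properties enter the picture only in the subsequent fixed-point step that combines this lemma with a stability estimate for the ODE governing $\z_i$.
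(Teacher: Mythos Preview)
Your approach is correct but takes a different route from the paper's. The paper writes the difference $r=v_1-v_2$, observes that it solves the linear transport equation
\[
\partial_t r=(Dv_1+Dv_2)\cdot Dr+G(z,\z_1(t))-G(z,\z_2(t)),
\]
and then (invoking Lemma~4.2 of \cite{Mirrahimi-Roquejoffre - 2016} for the details) obtains the $W^{3,\infty}$ bound by differentiating this equation and integrating along its characteristics. The only new ingredient, specific to the local-concavity setting, is to check that these characteristics $\dot\gamma=-(Dv_1+Dv_2)$ with endpoint in $\overline\Omega_0$ remain in $\overline\Omega_0$; this follows from the strict concavity of $v_1,v_2$ there and the fact that their maxima lie in $\overline\Omega_0$.

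Your route via the Euler--Lagrange trajectories and their variational equations is equally valid and, as you note, has the minor advantage that confinement to $\overline\Omega_0$ comes directly from Step~2 of Theorem~\ref{thm:section_Dyn_Prog_1} (using only the concavity of $G$ and $v_0$), so the strict concavity of $v_i$ themselves is never invoked. The price is the bookkeeping you flag: each $z$-differentiation of the trajectory map generates new cross-terms to track, whereas in the transport-equation approach all orders are handled by a single characteristic family and the source bound $\|G(\cdot,\z_1)-G(\cdot,\z_2)\|_{W^{2,\infty}(\overline\Omega_0)}\leq K_3|\z_1-\z_2|$ from~\eqref{asRD23_G} enters uniformly.
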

\begin{proof}
The proof of this lemma follows from similar arguments as in Lemma 4.2 in \cite{Mirrahimi-Roquejoffre - 2016}. We only provide an additional  argument which is needed in our case. Let $r=v_1-v_2$. Then, $r$ satisfies
\[
\p_t r =(Dv_1+Dv _2)\cdot Dr +G(z,\z_1(t))-G(z,\z_2(t)).
\]
This is a linear transport equation. 
Notice also that thanks to Assumption \eqref{asRD23_G}, we have 
\[
\|G(z,\z_1(t)-G(z,\z_2(t))\|_{W^{2,\infty}_z(\overline \Omega_0)}\leq K_3|\z_1(t)-\z_2(t)|.
\]
In order to obtain the result, it is hence enough  (see the proof of  Lemma 4.2 in \cite{Mirrahimi-Roquejoffre - 2016} for more details) to show that the characteristics, defined by 
\[
\begin{cases}
\dot{\gamma}(t)=-Dv_1(t,\gamma)- Dv_2(t,\gamma),\\
\gamma(t)=x,\quad \text{with $x\in \overline{\Omega}_0$},
\end{cases}
\]
has never left the set $\overline{\Omega}_0$, that is $\gamma(s)\in \overline{\Omega}_0$, for all $s\in [0,t]$. This property holds true thanks to the strict concavity of $v_1$ and $v_2$ in $[0,t]\times \overline{\Omega}_0$ and since the maximum of $v_i$ is attained in $\overline \Omega_0$ thanks to Theorem \ref{thm:section_Dyn_Prog_1}.
\end{proof}

\section{Proof of Theorem~\ref{thm:limit_epsilon}}\label{section:limit_varepsilon}
	
	Replacing the Hopf-Cole transformed function~\eqref{eq:transformed_hopf_cole} into equation~\eqref{eq:main}	we obtain
	\begin{equation}\label{eq:hopf-cole}
		\begin{cases}
			\partial_t u_\varepsilon(t,z)=\varepsilon \partial^2_{zz} u_\varepsilon(t,z) +\left| \partial_{z}u_\varepsilon(t,z) \right|^2 + R(z) - \rho_\varepsilon(t) +{\Phi_\varepsilon(t,z)},\\
			\rho_\varepsilon(t) = \displaystyle\int_\R n_\varepsilon(t,z){\rm dz},
		\end{cases}
	\end{equation}
	{where
	$$
	\Phi_\varepsilon(t,z):=\tau \displaystyle\int_\R \frac{n_\varepsilon(t,y)}{\rho_\varepsilon(t)}H(z-y)\ {\rm d}y.
	$$}
	Note that thanks to Assumption \eqref{eq:hypothesis_H}, $\Phi_\e(t,z)\in [-\tau,\tau]$, $\partial_z \Phi_\varepsilon(t,z)\in [0,\tau]$ and $\partial_{zz}^2\Phi_\varepsilon(t,z)\in [-\tau\cdot\sup H'', \tau\cdot\sup H'']$.

\subsection{Regularity estimates}

Let us begin by presenting an initial convergence result for $\rho_\varepsilon$ and $\phi_\varepsilon$.
\begin{lem}\label{lem:rho_phi_weak_convergence}
For all $\varepsilon\in (0,1)$ we have that
\begin{equation}
\label{bound-rho-e}
0\leq \rho_\varepsilon(t)\leq \rho_{\text{max}}\quad \text{and}\quad-\tau<\Phi_\varepsilon(t,z)<\tau,
\end{equation}
with
\[
\rho_{\text{max}}:=\max\{\max\limits_{z\in\R}R(z), \rho_M\}=\max\{1, \rho_M\}.
\]
As a consequence, there exist  functions $\rho(t)\in L^\infty(\R^+)$ and $\Phi(t,z)\in  L^\infty(\R^+;C^2( \R))$ such that, as $\varepsilon\to 0$ and along subsequences,
	\[
	\rho_\varepsilon(t)\to  \rho(t) \in [0,\rho_{\text{max}}]	,
	\]
	weakly-* in $L^\infty(\R_+)$ and $\Phi_\varepsilon(t,z)$ converges to $\Phi (t,z) $ in $L^\infty(w\ast (\R^+);C^2( \R))$ with 
	\[
	\begin{aligned}
			   \Phi(t,z) 	\in [-\tau, \tau],\quad      \partial_z\Phi(t,z) 	\in [0, \tau] \quad\text{and} \quad \partial^2_{zz}\Phi(t,z)	\in [-\tau\cdot\sup H'', \tau\cdot\sup H''].
	\end{aligned}
	\]
\end{lem}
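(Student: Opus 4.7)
\textbf{Proof plan for Lemma \ref{lem:rho_phi_weak_convergence}.} The strategy is first to establish uniform pointwise bounds on $\rho_\varepsilon$ and on $\Phi_\varepsilon$ together with its first two $z$-derivatives, and then to extract weak-$*$ convergent subsequences via Banach-Alaoglu.

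I would start by integrating \eqref{eq:main} in $z$ over $\R$. The diffusion term vanishes (using the spatial decay of $n_\varepsilon$ implicit in $\rho_\varepsilon<\infty$), and the double integral coming from the transfer term,
\[
\tau \int_\R \int_\R \frac{n_\varepsilon(t,z)\, n_\varepsilon(t,y)}{\rho_\varepsilon(t)}\, H(z-y) \, dy \, dz,
\]
vanishes by the oddness of $H$ combined with the symmetry of $n_\varepsilon(t,z)n_\varepsilon(t,y)$ under swapping $(z,y)$. This leaves the ODE identity $\varepsilon \rho_\varepsilon'(t)=\int_\R R(z)n_\varepsilon(t,z)\,dz-\rho_\varepsilon(t)^2$, and using $\max_z R(z)=1$ one gets $\varepsilon\rho_\varepsilon'(t)\leq \rho_\varepsilon(t)(1-\rho_\varepsilon(t))$. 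A comparison with the logistic ODE then yields $\rho_\varepsilon(t)\leq \max\{1,\rho_\varepsilon(0)\}\leq \rho_{\max}$ thanks to \eqref{eq:hypothesis_rho}, while $\rho_\varepsilon\geq 0$ is immediate from $n_\varepsilon>0$.

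Next, the bounds on $\Phi_\varepsilon$ and its derivatives would be obtained by differentiating under the integral in the definition of $\Phi_\varepsilon$ and using the probability measure structure of $n_\varepsilon(t,\cdot)/\rho_\varepsilon(t)$. Since $|H|\leq 1$ we get $|\Phi_\varepsilon|\leq\tau$; since $H$ is strictly increasing on the whole real line with $H''<0$ on $(0,\infty)$ and $H$ odd, we have $H'\in[0,H'(0)]=[0,1]$, yielding $\partial_z\Phi_\varepsilon\in[0,\tau]$; and the uniform boundedness of $H''$ gives the stated two-sided bound on $\partial_{zz}^2\Phi_\varepsilon$. The strictness $-\tau<\Phi_\varepsilon<\tau$ follows from the strict bounds $-1<H<1$ combined with $n_\varepsilon>0$.

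Finally, the uniform bound $\rho_\varepsilon\in L^\infty(\R^+)$ together with Banach-Alaoglu provides, up to a subsequence, a weak-$*$ limit $\rho\in L^\infty(\R^+)$ with $0\leq\rho\leq\rho_{\max}$ a.e. Likewise, the uniform bounds above place $\Phi_\varepsilon$ in a bounded subset of $L^\infty(\R^+;C^2(\R))$, and the corresponding weak-$*$ compactness yields (along a further subsequence) a limit $\Phi\in L^\infty(\R^+;C^2(\R))$ inheriting the same pointwise bounds on itself and on its first two $z$-derivatives. I do not anticipate any serious obstacle here; the only points requiring a little care are the justification of the vanishing of the diffusion term after integration in $z$ (which relies on the decay estimates on $n_\varepsilon$ ensured by the well-posedness of the problem) and the precise predual used to phrase the weak-$*$ convergence in $L^\infty(w*;C^2(\R))$, which is standard.
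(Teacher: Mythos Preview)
Your proposal is correct and follows essentially the same approach as the paper: integrate the equation in $z$, use the oddness of $H$ to kill the transfer term, obtain the logistic differential inequality for $\rho_\varepsilon$, then read off the bounds on $\Phi_\varepsilon$ and its derivatives directly from the properties of $H$ and the probability-measure structure of $n_\varepsilon/\rho_\varepsilon$, and finally invoke weak-$*$ compactness. The only minor refinement in the paper is that it notes $\Phi_\varepsilon$ is actually bounded in $L^\infty(\R^+;C^3(\R))$ (since $H\in C^3$), which makes the extraction of a limit in $L^\infty(w\ast(\R^+);C^2(\R))$ slightly cleaner.
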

\begin{proof}
Integrating the equation~\eqref{eq:main} in the whole $\R$  (this integration should be done via a test function, but we omit the details) we obtain
	\[
	\varepsilon \rho_\varepsilon'(t)=-\rho_\varepsilon(t)^2 +\int_\R n_\varepsilon(t,z)R(z){\rm dz} + \int_\R n_\varepsilon(t,z)\Phi_\varepsilon(t,z){\rm dz}.
	\]
	However, the last integral is equal to 0 due to the symmetry of the kernel $H$, let us see how. We have
	\[
	\begin{aligned}
		\int_\R n_\varepsilon(t,z)\Phi_\varepsilon(t,z){\rm dz} &=\frac{\tau}{\rho(t)} \int_\R\int_\R n_\varepsilon(t,z) n_\varepsilon(t,y)H(z-y){\rm dy}{\rm dz}\\
		&=- \frac{\tau}{\rho(t)} \int_\R\int_\R n_\varepsilon(t,z) n_\varepsilon(t,y)H(y-z){\rm dy}{\rm dz}\\
		&=- \frac{\tau}{\rho(t)} \int_\R\int_\R n_\varepsilon(t,z) n_\varepsilon(t,y)H(y-z){\rm dz}{\rm dy}\\
		&=- \frac{\tau}{\rho(t)} \int_\R\int_\R n_\varepsilon(t,y) n_\varepsilon(t,z)H(z-y){\rm dy}{\rm dz}= - \int_\R n_\varepsilon(t,z)\Phi_\varepsilon(t,z){\rm dz},
	\end{aligned}
	\]
	where we used first the symmetry of $H$, then applyed Fubini's Theorem, and finally renamed $z$ as $y$ and vicerversa. Since $x=-x$ if and only if $x=0$, we conclude our claim. Therefore
	\[
	\varepsilon \rho'_\varepsilon(t)\leq \max\limits_{z\in\R}R(z)\cdot \rho_\varepsilon(t)-\rho_\varepsilon(t)^2.
	\]
	This differential inequality yields the upper bound for $\rho$ in \eqref{bound-rho-e}. The rest of the lemma follows from the fact that  $\rho_\e$ and $\Phi_\e$ are bounded respectively in $L^\infty(\R^+)$ and $L^\infty(\R^+; C^3(\R))$.
\end{proof}

	For the study of equation~\eqref{eq:hopf-cole} we will need the following regularity estimates.
 
\begin{prp}\label{prp:regularity}
	Let conditions~\eqref{hyp:R},~\eqref{eq:hypothesis_H} and~\eqref{eq:hypothesis_u_0} be satisfied. Let $u_\varepsilon$ be a solution of~\eqref{eq:hopf-cole} and $\varepsilon\leq\varepsilon_0<1$ for a certain $\varepsilon_0\in(0,1)$.
	\begin{itemize}
		\item[(i)] There exist positive constants     $C_1$ and $C_2$ such that, for $\e$ small enough,
	\begin{equation}
	\label{uniform-bound-u}
	 -A_1-\overline B_1z^2-C_1t \leq u_\e(t,z)\leq  A_2-\overline B_2z^2+C_2t,
	\end{equation}
	with $\overline B_1=\max(B_1,\sqrt{K_4}/2)$ and $\overline B_2=\min(B_2,\sqrt{K_2}/2)$.  
%Given any compact set $K\subset \R\times \R^+$ there exists a positive constant $C$ independent of $\varepsilon$ such that $|u_\varepsilon(t,z)|\leq C$ for all $(t,z)\in K$.

\item[(ii)] Let $D\subset \R$ be a bounded domain and $T>0$. Then there exists a positive constant $C(R,\tau, D)$ such that for all $\varepsilon\in (0,1)$,
		$$
		\left| \partial_z u_\varepsilon(t,z) \right|\leq C(R,\tau, D)\quad\text{for all}\quad t\in [0, T],\ z\in D.
		$$
			
	\item[(iii)] Let $D\subset \R$ be a bounded domain. Then there exists a positive constant $S_c(R,\tau, D, C_2)$ such that for all $\varepsilon\in (0,1)$, $t\geq 0$,
	$$
	\partial^2_{zz}u_\varepsilon(t,z)\geq -S_c(R,\tau, D, C_2)\quad\text{for all}\quad z \in D,
	$$
	where $C_2$ comes from hypothesis~\eqref{eq:hypothesis_u_0}. In other words, the family $u_\varepsilon$ is locally uniformly semi-convex. 
%			In addition, if hypothesis~\eqref{eq:hypothesis_concavity} is satisfied, then
%$$
%-\sqrt{\frac{|c_2|}{2}}<\partial^2_{z}u_\varepsilon(t,z)<-\sqrt{\frac{|c_1|}{2}}\quad\text{for all}\quad\varepsilon\in (0,1),\ t\geq 0.
%$$
    \item[(iv)] Let $T>0$. Then the functions $u_\varepsilon$ are uniformly locally equicontinuous with respect to time for all $t\in [0,T]$.
\end{itemize}
All of this together means that, if the initial datum $u_{\varepsilon,0}$ is regular enough, the family of functions $\{u_\varepsilon\}_\varepsilon$ is locally uniformly equibounded and equicontinuous with respect to $z$ and $t$, and locally uniformly semi-convex with respect to $z$.
\end{prp}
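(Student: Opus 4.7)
The plan is to establish the four items in the order (i), (iii), (ii), (iv), since each builds on the previous. Lemma~\ref{lem:rho_phi_weak_convergence} provides the uniform bounds on $\rho_\varepsilon$, $\Phi_\varepsilon$, $\partial_z\Phi_\varepsilon$, and $\partial^2_{zz}\Phi_\varepsilon$ used throughout, and all estimates rely on the Hopf--Cole equation~\eqref{eq:hopf-cole} combined with standard parabolic comparison and Bernstein arguments.

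For (i) I would construct quadratic-in-$z$, linear-in-$t$ barriers. Take
\[
\bar u(t,z) = A_2 - \overline{B}_2\, z^2 + C_2 t, \qquad \underline u(t,z) = -A_1 - \overline{B}_1\, z^2 - C_1 t.
\]
The choices $\overline{B}_2 = \min(B_2, \sqrt{K_2}/2)$ and $\overline{B}_1 = \max(B_1, \sqrt{K_4}/2)$ are precisely what is needed so that the gradient term $|\partial_z \bar u|^2 = 4\overline{B}_2^2 z^2$ is dominated by the quadratic envelope $K_2 z^2$ from~\eqref{hyp:R}, while symmetrically $4\overline{B}_1^2 z^2 \geq K_4 z^2$. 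The constants $C_1, C_2$ are then chosen large enough to absorb $K_1$, $\rho_{max}$ and $\tau$ using Lemma~\ref{lem:rho_phi_weak_convergence}. The initial inequalities $\underline u(0,\cdot) \leq u_{\varepsilon,0} \leq \bar u(0,\cdot)$ follow from~\eqref{eq:hypothesis_u_0} since $\overline{B}_2 \leq B_2$ and $\overline{B}_1 \geq B_1$. The parabolic comparison principle for the viscous Hamilton--Jacobi equation~\eqref{eq:hopf-cole} then yields~\eqref{uniform-bound-u}.

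For (iii) I would apply the Bernstein method as in Step~5 of the proof of Theorem~\ref{thm:section_Dyn_Prog_1}. Setting $w = \partial^2_{zz} u_\varepsilon$ and differentiating~\eqref{eq:hopf-cole} twice in $z$ gives
\[
\partial_t w = \varepsilon\, \partial^2_{zz} w + 2(\partial_z u_\varepsilon)\,\partial_z w + 2\, w^2 + R''(z) + \partial^2_{zz}\Phi_\varepsilon(t,z).
\]
Take a smooth cutoff $\varphi$ with $\varphi \equiv 1$ on $D$ and supported in a slightly larger $D'$, and study the minimum of $W = w\,\varphi^2$ on $[0,T]\times\overline{D'}$. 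At an interior minimum the maximum-principle relations $\partial_t W \leq 0$, $\partial_z W = 0$, $\partial^2_{zz} W \geq 0$, combined with the coercive $2w^2$ term and the bounded source terms $R''(z) \geq -\underline{K}_0$ (from~\eqref{hyp:R}) and $\partial^2_{zz}\Phi_\varepsilon \geq -\tau \sup |H''|$ (from Lemma~\ref{lem:rho_phi_weak_convergence}), yield an $\varepsilon$-independent lower bound on $W$, exactly as in Step~5; the case where the minimum is attained at $t=0$ is handled by the initial bound $\partial^2_{zz} u_{\varepsilon,0} \geq -C_2$ from~\eqref{eq:hypothesis_u_0}.

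Items (ii) and (iv) then follow from (i) and (iii). For (ii) the point is that a function $f$ with $f'' \geq -S_c$ on $[z-1, z+1]$ and $|f| \leq M$ there automatically satisfies $|f'(z)| \leq 2M + S_c/2$, via $f(z\pm 1) \geq f(z) \pm f'(z) - S_c/2$; applied to $u_\varepsilon(t,\cdot)$ on a slightly enlarged domain this gives the gradient bound. For (iv) the main obstacle is that (iii) provides only a lower bound on $\partial^2_{zz} u_\varepsilon$, so the equation directly yields only a lower bound on $\partial_t u_\varepsilon$; to obtain a matching upper modulus of continuity in time, I would use a viscosity super-solution barrier
\[
\psi(t,z) = u_\varepsilon(t_0, z_0) + \frac{L\,|z-z_0|^2}{\sqrt h} + M(t-t_0) + N\sqrt h
\]
on a small parabolic cylinder around $(t_0, z_0)$, where $L$ is the local Lipschitz constant from (ii) and $M, N$ absorb the right-hand side of~\eqref{eq:hopf-cole} (bounded thanks to (ii) and Lemma~\ref{lem:rho_phi_weak_convergence}). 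The parabolic-boundary inequality $u_\varepsilon \leq \psi$ uses Young's inequality $L|z-z_0| \leq |z-z_0|^2/\sqrt h + L^2\sqrt h/4$ together with (ii). Comparison then yields $u_\varepsilon(t, z_0) - u_\varepsilon(t_0, z_0) \leq M(t-t_0) + N\sqrt h$ for $0<t-t_0\leq h$, giving a $\sqrt{h}$-modulus; the lower bound is obtained symmetrically by a sub-solution barrier, and together they establish the local equicontinuity in time.
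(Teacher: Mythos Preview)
Your treatment of (i) and (iv) is fine and essentially matches the paper (the paper applies comparison to $n_\varepsilon$ rather than $u_\varepsilon$, which is the same thing through the Hopf--Cole transform, and for (iv) it simply cites the standard barrier argument of Barles--Biton--Ley, of which your construction is a concrete instance).

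The gap is in your ordering of (ii) and (iii). Your Bernstein argument for (iii) ``exactly as in Step~5'' is not self-contained: once you localize with a cutoff $\varphi$, the transport term $2(\partial_z u_\varepsilon)\,\partial_z w$ produces, at the minimum of $W=w\varphi$, a contribution of the form $2(\partial_z u_\varepsilon)\,\varphi'/\varphi\cdot W$, and this can only be absorbed into the coercive $2w^2$ term if $\partial_z u_\varepsilon$ is already known to be locally bounded. Step~5 makes this explicit through the requirement $\|Dv\cdot D\phi\|_{L^\infty}<1/4$ on the cutoff. So (iii) as written presupposes (ii), and your chain (i)$\Rightarrow$(iii)$\Rightarrow$(ii) is circular.

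The paper breaks the circle by proving (ii) first with an independent Bernstein argument: it sets $u_\varepsilon=\theta(w)$ for a carefully chosen strictly increasing $\theta$ on a bounded interval (so that $|\theta'|\le1$ and $\theta''+\varepsilon(\theta'''\theta'-(\theta'')^2)/(\theta')^2<0$), derives the equation for $w$, and runs a localized maximum-principle argument on $|w'\zeta|$. This yields the Lipschitz bound without any semi-convexity input; (iii) then follows as in Step~5. Your derivation of the gradient bound from boundedness plus semi-convexity is correct and more elegant than the paper's $\theta$-argument, but it cannot replace it here because it sits downstream of (iii). If you want to keep your order, you would need a version of (iii) that avoids the gradient bound (for instance a global, non-localized argument, which however requires justifying that the spatial infimum of $\partial_{zz}^2u_\varepsilon$ is attained); otherwise, insert the paper's independent proof of (ii) before (iii).
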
 
\begin{proof}

	\noindent\textbf{Proof of Proposition~\ref{prp:regularity} (\textit{i})}:

 The result follows simply from a comparison principle applied to~\eqref{eq:main}. Notice that, thanks to assumptions \eqref{hyp:R}, \eqref{eq:hypothesis_H} and Lemma \ref{lem:rho_phi_weak_convergence}, we have
 $$
n_\e (K_3-K_4z^2-\rho_{\rm max}-\tau )\leq \varepsilon \partial_t n_\varepsilon(t,z)-\varepsilon^2 \partial^2_{zz}n_\varepsilon (t,z)\leq   n_\varepsilon(t,z)(K_1-K_2z^2+\tau).
 $$
Combining these inequalities with the hypothesis~\eqref{eq:hypothesis_u_0},  we can apply the comparison principle to deduce that, there exist positive constants     $C_1$ and $C_2$ such that, for $\e$ small enough,
$$
\exp\big(\f{-A_1-\overline B_1z^2-C_1t}{\e}\big)\leq n_\e\leq \exp\big(\f{A_2-\overline B_2z^2+C_2t}{\e}\big),
$$
with $\overline B_1=\max(B_1,\sqrt{K_4}/2)$ and $\overline B_2=\min(B_2,\sqrt{K_2}/2)$.

	\textbf{Proof of Proposition~\ref{prp:regularity} (\textit{ii})}:

	Let us apply the Bernstein Method and start by choosing a closed set $E$ such that $D\subset E\subset \R$ and $\mathrm{dist}(D,\p E)>1$, and a finite time $T$. We will study the equation in the set $E_T:= [0,T]\times E$. Let $m$ and $M$ be the minimum and maximum possible values of $u_\varepsilon$ on $E_T$ respectively. {These constants} can be chosen independently of $\e$ thanks to \eqref{uniform-bound-u}. We define {the auxiliary functions}
	$$
	\theta(x)=-\frac{M-m}{2\Lambda^2}x^2+\frac{3(M-m)}{2\Lambda}x+m\quad \text{for }x\in [0, \Lambda],\quad w(t,z):=\theta^{-1}(u_\varepsilon(t,z)),
	$$
	with $\Lambda>\max(3(M-m), 2)$, a big enough constant. {The idea here is to work with the function $w$ instead of  the function $u$. We will indeed derive an equation on $w$ that we will differentiate with respect to $z$. We then obtain the desired estimate after some computations on this equation and using the maximum principle. The function $\theta$ has the advantage of introducing a term of order $(w')^4$ with definite sign in equation~\eqref{eq:bernstein_theta_w},   that  would not appear in the computations otherwise (consider $\theta(w)=w$ to verify this). This term is crucial to obtain the desired estimate.} \\
	Since $u_\varepsilon=\theta(w)$ and $\theta$ is monotone increasing, it is clear that $w:E_T\to [0,\Lambda]$. Moreover, we have that
	$$
	|\theta'|\leq 1,\quad \frac{\theta'''\theta'-(\theta'')^2}{(\theta')^2}< -\theta''\quad\text{and}\quad \left| \frac{\theta''}{\theta'} + \theta'\right|< 1.
	$$
	A bound for $\partial_z w$ will yield immediately a similar bound for $\partial_z  u_\varepsilon$. For the rest of this section we will use the prime notation $w'$ instead of $\partial_z w$  for the sake of brevity.

	Finally, let us take a cut-off function $\zeta(t,z)$ such that
	$$
	\zeta(t,z):=\frac{t+1}{T+1}\varphi(z),
	$$
	where $\varphi$ is a positive smooth, compactly supported cut-off function such that
	$$
	\varphi(z)\equiv 0\text{ for all }z\in \R\setminus E,\quad \varphi(z)\equiv 1\text{ for all }z\in D
	$$
	and
	$$
	|\zeta|\leq 1,\quad |\zeta'|_\infty + |\zeta''|_\infty\leq C_\varphi,
	$$
	where $C_\varphi$ is a positive constant.
	
	We substitute $u_\varepsilon=\theta(w)$ in~\eqref{eq:hopf-cole} to obtain
	$$
	w_t=\left(\varepsilon \frac{\theta''}{\theta'} + \theta'\right)(w')^2 + \varepsilon w'' + h,
	$$
	where
	$$
	h(t,z):= \frac{1}{\theta'}\big(-\rho_\varepsilon(t) + R(z)+\Phi_\varepsilon(t,z)\big).
	$$
	Let us differentiate this equation with respect to $z$   and multiply by $w'\zeta^4$ in order to obtain
	$$
	w'w'_t\zeta^4=\left(\varepsilon \frac{\theta'''\theta'-(\theta'')^2}{(\theta')^2} + \theta''\right)(w')^4\zeta^4 +2\left(\varepsilon \frac{\theta''}{\theta'} + \theta'\right)(w')^2w''\zeta^4+ \varepsilon w' w'''\zeta^4  + h'w'\zeta^4.
	$$
	We next look  for a maximum of the function $v:=|w'\zeta|$. This maximum should be attained at an interior point of $E$ that we denote by $(t_m, z_m)$. If it is attained at $t_m=0$, then by the assumption~\eqref{eq:hypothesis_u_0} there must exist a constant 
	\[
	k_D:=\sup\limits_{z\in D,\  \varepsilon\in(0,1)}|\partial_zu_{0,\varepsilon}(z)|
	\]
	such that
	\begin{equation}\label{eq:k_tilde}
	|u'_\varepsilon(t,z)|\leq \tilde k_D\text{ for all }(z,t)\in [0,T]\times D,
	\end{equation}
	which is the desired result.
	
	If, on the contrary, the maximum point is attained at a positive time $t_m>0$, since the maximum of $v$ must coincide with the maximum of $v^2$, on such a point we must have
	$$
	[(w'\zeta)^2]'(t_m, z_m)=0\Rightarrow w''(t_m, z_m)\zeta(t_m, z_m)=-w'(t_m, z_m)\zeta'(t_m, z_m),
	$$
	and
	$$
	[(w'\zeta)^2]''\leq 0\Rightarrow w'''w'(z_m)\zeta^2\leq \left(2(\zeta')^2 - \zeta\zeta''\right)(w')^2,
	$$
	and also
	$$
	\partial_t[(w'\zeta)^2]\geq 0\Rightarrow w'w'_t\zeta^2\geq-(w')^2\zeta\zeta_t.
	$$
	We omitted the point $(t_m, z_m)$ for the sake of brevity. Replacing this in the previous equation we obtain that, at the maximum point $(t_m, z_m)$,
	\begin{equation}\label{eq:bernstein_theta_w}
	\begin{aligned}
		-(w')^2\zeta^3\zeta_t\leq & \left(\varepsilon\frac{\theta'''\theta'-(\theta'')^2}{(\theta')^2} + \theta''\right)(w')^4\zeta^4 -2\left(\varepsilon \frac{\theta''}{\theta'}+ \theta'\right)(w')^3\zeta^3\zeta' \\
		& + \left(\varepsilon (2(\zeta')^2-\zeta\zeta'')\right)(w')^2\zeta^2 +h'w'\zeta^4,
	\end{aligned}
	\end{equation}
	meaning that
	$$
	\begin{aligned}
		\left(-\theta'' -\varepsilon \frac{\theta'''\theta'-(\theta'')^2}{(\theta')^2} \right)(w')^4\zeta^4    \leq & (w')^2\zeta^3\zeta_t+ \varepsilon (2(\zeta')^2-\zeta\zeta'')(w')^2\zeta^2+|h'||w'|\zeta^4\\
		& -2\left(\varepsilon \frac{\theta''}{\theta'} + \theta'\right)(w')^3\zeta^3\zeta'.
	\end{aligned}
	$$ 
	Due to the properties of $\theta$, the factor on the left-hand side can be bounded below by a positive constant $c_1$ since $\varepsilon\leq\varepsilon_0<1$, and the factor on the right-most term can be bounded above by a positive $c_2$. We deduce that, at the maximum point, and for a constant $C$ depending on $c_2$ and $C_\varphi$,
	$$
	c_1|v|^4\leq c_2|\zeta'||v|^3+ \left(|\zeta\zeta_t|+\varepsilon |2(\zeta')^2-\zeta\zeta''|\right)  |v|^2 +\bar{h'}\zeta^3|v|\leq 2C(|v|^3+|v|^2)+\bar{h'}|v|,
	$$ 
	where
	$$
	\bar{h'}:=\max\limits_{z\in E,t\in[0,T]}|h'(z,t)|.
	$$
	Note that $\bar{h'}$ depends only on the domain $E$, but since $E$ has been chosen freely depending on $D$, it actually depends on the domain $D$, the function $R(z)$ and $\tau$.
	
	The last inequality implies that there must exist a positive constant $k(R,\tau, D)$ such that
	$$
	\sup\limits_{(t, z)\in E_T}|w'\zeta|=|w'\zeta (t_m, z_m)|\leq k(R,\tau, D).
	$$
	We next compare $w'$ and $w'\zeta$ as follows
	$$
	\sup\limits_{(t, z)\in [0,T]\times D}|w'|= \sup\limits_{(t, z)\in [0,T]\times D}|w'\zeta|\leq \sup\limits_{z\in E_T}|w'\zeta|\leq k(R,\tau, D),
	$$
	and thus, since $u_\varepsilon'=\theta'w'$ and $|\theta'|\leq 1$,
	$$
	|u'_\varepsilon(t,z)|\leq k(R,\tau, D) \text{ for all }(z,t)\in [0,T]\times D.
	$$
	
	We conclude  by taking as our desired bound the maximum between this bound $k$ and $k_D$ from~\eqref{eq:k_tilde}.
	
	\noindent\textbf{Proof of Proposition~\ref{prp:regularity} (\textit{iii})}:
	
	This part can be proved similarly to the proof in Step 5 of Section 2.1, where the Bernstein Method is applied, but in the specific case of one dimension.

    \noindent\textbf{Proof of Proposition~\ref{prp:regularity} (\textit{iv})}:

    Once the locally uniform bounds   and the uniform Lipschitz bound with respect to $z$ are obtained for $u_\varepsilon$, then the equi-continuity in time follows  using standard arguments, see~\cite{BarlesBitonLey}.
    
\end{proof}
	
\subsection{Convergence to the Hamilton-Jacobi equation}	
	At this point, even though we obtained regularity estimates for $u_\varepsilon$ that assure its convergence, as $\varepsilon\to 0$ and along subsequences, to a continuous function $u$, we do not have enough regularity in time of the limits $\rho$ and $\Phi$ in order to pass to the limit in $\varepsilon$ directly in equation~\eqref{eq:hopf-cole}. To by-pass this complication we define the auxiliary function
	\[
	w_\varepsilon(t,z):=u_\varepsilon(t,z)+\int_0^t\rho_\varepsilon(s)\ \rm{d}s-\int_0^t\Phi_\varepsilon(s, z)\ \rm{d}s.
	\]
	Notice that $w_{\varepsilon,0}(z):=w_\varepsilon(0, z)=u_{\varepsilon, 0}(z)$. With our previous estimates, we can pass to the limit in the equation satisfied by $w_\varepsilon$,
	\[
	\partial_t w_\varepsilon(t,z) =\varepsilon \partial_{zz}^2 w_\varepsilon(t,z)+\left| \partial_z w_\varepsilon(t,z) +\int_0^t\Phi'_\varepsilon(s, z)\ \rm{d}s\right|^2 + R(z) + \varepsilon\int_0^t\Phi''_\varepsilon(s, z)\ \rm{d}s,
	\]
	to obtain the existence of a continuous function $w$ that is a viscosity solution of
	\[
	\partial_t w(t,z) =\left| \partial_z w(t,z) +\int_0^t\Phi'(s, z)\ \rm{d}s\right|^2 + R(z)
	\]
	with initial datum $w_0=u_0$. Notice that using the notion of viscosity solutions for Hamiltonians with $L^1$ dependence with respect to $t$ \cite{HI:85,PL.BP:87}, this also implies that $u$ is a viscosity solution to \eqref{eq:viscosity_u}. Following the Dynamic Programming Principle, see~\cite{BardiCapuzzoDolcetta-1997,Lions-1982}, a variational solution of the previous equation is given by the formula
	\[
	w(t,z) = \sup_{(\gamma(s),s)\in\R^d\times[0,t], \gamma(t)=z}\Big\{w_0(\gamma(0))+\int_0^t L(s,\dot\gamma, \gamma)+R(\gamma(s))\ \rm{d}s\Big\},
\]
where $\gamma\in W^{1,2}([0,t] : \R)$, with
\[
L(t,q,z):=\inf\limits_{p \in \R}\left\{q\cdot p+\left|p + \int_0^t\Phi'(s, z)\ \rm{d}s\right|^2  \right\}.
\]
This infimum of the functional above is attained at the value $p=-q/2 - \int_0^t\Phi'$ and thus
\[
L(t,q,z)=-\frac{q^2}{4} - q\int_0^t\Phi'(r, z)\ \rm{d}r
\]
so
\[
w(t,z)=\sup\left\{u_0(\gamma(0))+\int_0^t-\frac{\dot\gamma(s)^2}{4}-\dot\gamma(s)\int_0^s\Phi'(r, \gamma(s))\ \rm{d}r + R(\gamma(s))  \rm{d}s \right\}.
\]
However, exchanging the order of integration we have that
\[
\begin{aligned}
\int_0^t \int_0^s -\dot\gamma(s)\Phi'(r, \gamma(s))\ \rm{d}r\ \rm{d}s& = \int_0^t \int_r^t -\dot\gamma(s)\Phi'(r, \gamma(s))\ \rm{d}s\ \rm{d}r \\
&= \int_0^t -\Phi(r,\gamma(t))+\Phi(r, \gamma(r))  \ \rm{d}r.
\end{aligned}
\]
Using that $\gamma(t)=z$ and that
\[
	w(t,z)=u(t,z)+\int_0^t\rho(s)\ \rm{d}s-\int_0^t\Phi(s, z)\ \rm{d}s,
\]
we obtain the formula given in Theorem~\ref{thm:limit_epsilon}.

\subsection{The inequality $u\leq 0$ and the convergence of $n$}\label{sec:constrain_u_and_convergence_n}

We first notice that $n_\e$ is uniformly bounded  in $L^\infty(\R^+ ;L^1(\R))$. This implies that, as $\e\to 0$ and along subsequences, $n_\e$ converges in $L^\infty(w \ast(\R^+);\mathcal M^1(\R))$ to a measure $n\in L^\infty(\R^+;\mathcal M^1(\R))$. Moreover, $\rho_\e$ is uniformly bounded in $L^\infty(\R^+)$. Hence, it converges as $\e\to 0$ and along subsequences, in $L^\infty(w\ast (\R^+))$ to a function $\rho(t)\in L^\infty(\R^+)$. Furthermore, $\Phi_\e$ is uniformly bounded in $L^\infty(\R^+;C^3(\R)$. Therefore, it converges, along subsequences, in  $L^\infty(w\ast(\R^+); C^2 (\R))$ to a function $\Phi\in L^\infty(\R^+;C^2( \R))$. Moreover, from the bounds in \eqref{bound-rho-e}, we obtain the bounds in \eqref{bound-rho}.

We next obtain the bound $u\leq 0$. Let $u$ be the limit of $u_\e$ along a subsequence, that we will fix and consider our arguments along this subsequence.  We argue by contradiction. Suppose that at a certain point $z_1$  we have $u_\varepsilon(t,z_1)\to u(t,z_1)>0$. Then by the equicontinuity of the family $u_\varepsilon$, there exists  $\delta>0$ such that $u(t,z)>0$ for all $z\in(z_1-\delta, z_1+\delta)$. We next notice that
\begin{equation}
\label{rho-u}
\rho_\e(t)=\int_\R e^{\frac{u_\e(t,z)}{\e}}dz.
\end{equation}
The Fatou's Lemma would then imply that $\rho(t)=\infty$, which is a contradiction with the upper bound on $\rho(t)$ given in \eqref{bound-rho}.

Finally, let us prove the second statement in~\eqref{eq:support}. Suppose that $z_1\not\in \{z\in\R\ :\ u(t,z)=0\}$. Then, by the inequality $u\leq 0$ and the continuity of $u$ we deduce that there exists an open set $D$ such that $z_1\in D$  and
\[
\lim\limits_{\varepsilon\to 0} u_\varepsilon(t,z)<0\quad\text{for all}\quad z\in D.
\]
Choose now any $f\in C_c(\R)$ such that $\supp (f)\subseteq D$. Then, 
\[
\int_\R n(t,z)f(z)\ \d z =  \int_\R \lim\limits_{\varepsilon\to 0}  \left(e^{\frac{u_\varepsilon(t,z)}{\varepsilon}}\right)f(z)\ \d z =0.
\]
This implies that $n(t,z)\equiv 0$ for all $z\in D$ and, in particular, $z_1\not\in \supp(n(t,\cdot))$, proving our claim.

 \section{Proof of Theorem~\ref{thm:limit_epsilon_non_concave}.}\label{sect:non_concave}
	
	In this section we will use similar  ideas from  Section~\ref{sec:Dyn_Prog} to obtain the result, but the adaptation of the arguments to our problem requires some work. In particular, given a general $T>0$, we may not  necessarily have a set $\Omega_0$ where the fitness function $F$ is positive, greater than its values outside $\Omega_0$ and concave in $[0,T]\times \Omega_0$.
	
	The form of $u_0$ and the concavity of the fitness function around the value $z_0$ can indeed assure that the solution $u$ remains concave   for small times, but as time goes by $u$ may lose its concavity. New local maximum points can appear that may reach the value $u=0$. We will show that, as long as $\mu\in [0,\mu_1]$ and $z_0\leq \mu$, this cannot be the case, and while $u$ may become non-concave in a global sense, it will still have a single global maximum corresponding to a continuous monomorphic situation. We recall the definition of
	\[
	\z(t)\quad\text{as the unique point where}\quad \max\limits_{z\in\R}u(t,z)=u(t,\z(t))
	\]
	whenever this value is unique and well defined. Let us discuss briefly the structure of this section.
	
	In Section~\ref{sect:general_considerations} we prove that $u$ remains monomorphic at least for some small times $t\in[0, \varepsilon]$. First, we will see that it remains uniformly concave after the initial time, then we will characterize the maximum point with an equation of the form~\eqref{can-eq}, and then we will prove that, at least for small times, $\rho(t)>0$, which will lead to the constraint $\max u=0$. All this imply the existence of a positive time $T_m$ until which the solution is continuous monomorphic, and the next two sections are devoted to proving that $T_m=\infty$. Some more properties of the fitness function $F$ are also studied at the end of this subsection.
	
	Section~\ref{sec:z_0=mu} treats the simpler case where $\z(0)=\mu\leq\mu_1$, and Section~\ref{sect:z_0<mu} with the case $\z(0)<\mu\leq \mu_1$, which is more convoluted; we will integrate both analytic and geometric approaches to extend the ideas from Section 2 to this case, though with significant modifications. This is where the  analysis of the problem becomes more intricate and demanding.

	\subsection{Structure of the Fitness Function}\label{sect:general_considerations}
	
	Let us now study the general structure of the fitness function. Its formula, even if the solution $u$ is not monomorphic, is given by
	\[
	F(t, z)=R(z)- \rho(t)+\tau\int_\R \frac{n(t,y)}{\rho(t)} H(z-y)\ {\rm d}y .
	\]
	Thanks to the regularity properties of the kernel $H$ this function is differentiable in the $z$ variable, so we can compute
	\[
	\partial_z F(t, z)=R'(z)+\tau\int_\R \frac{n(t,y)}{\rho(t)}H'(z-y)\ {\rm d}y.
	\]
	Clearly, again by the regularity of $R$ and $H$, this function is continuous in the $z$ variable. On the other hand, while $F$ may be discontinuous in time, it is bounded by
	\[
	R(z)-\rho_{max}-\tau\leq F(t,z)\leq 1+\tau.
	\]
	This is enough to adapt most of the arguments of Section~\ref{sec:Dyn_Prog_1}.

	\begin{lem}\label{lem:deriv_F_cont}
		Let conditions of Theorem~\ref{thm:limit_epsilon_non_concave} hold. If  at a time $t_0\geq 0$ the solution $u$ has a unique zero $\z(t_0)$, then for all $z\in\R$ the functions $\partial_z F(t,z)$ and $\partial_{zz}^2 F(t,z)$ are continuous in time, uniformly with respect to $z$, in a neighbourhood of $t_0$.
	\end{lem}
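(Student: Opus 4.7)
The plan is to exploit the fact that the measure $n(t,\cdot)/\rho(t)$, which is the weight appearing in $\Phi$, is a probability measure whose support collapses to the single point $\bar z(t_0)$ as $t \to t_0$, and then use the smoothness of $H$ to convert this mass concentration into uniform control of the first two derivatives of $F$ in $z$. Since $R(z)$ contributes no time dependence, all the work is on the convolution term
\[
\Psi(t,z) := \int_{\mathbb{R}} \frac{n(t,y)}{\rho(t)}\, H(z-y)\,dy,
\]
and its $z$-derivatives.

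First I would localize the support of $n(t,\cdot)$ near $t_0$. By Theorem \ref{thm:limit_epsilon} we have $\supp(n(t,\cdot))\subseteq\{u(t,\cdot)=0\}$, $u$ is continuous on $\mathbb{R}^+\times\mathbb{R}$, and $u(t,z)\leq A_2-\overline B_2 z^2+C_2 t$ from Proposition~\ref{prp:regularity}. Fixing any $\epsilon>0$, the decay at infinity together with the assumption $u(t_0,z)<0$ for $z\neq\bar z(t_0)$ yields $\eta>0$ with $u(t_0,z)\leq-\eta$ for all $z$ with $|z-\bar z(t_0)|\geq\epsilon$. Equicontinuity of $u$ in time on compact $z$-sets, combined again with the quadratic upper bound to handle large $|z|$, then gives $\delta>0$ such that $u(t,z)\leq-\eta/2$ for $|t-t_0|\leq\delta$ and $|z-\bar z(t_0)|\geq\epsilon$. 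Hence $\supp(n(t,\cdot))\subseteq B(\bar z(t_0),\epsilon)$ for all such $t$.

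Next I would use this to compare $\Psi$ and its derivatives with their candidate limits. Since $n(t,\cdot)/\rho(t)$ is a probability measure (we may shrink $\delta$ further so that $\rho(t)>0$ on $|t-t_0|\leq\delta$, which follows from $\rho(t_0)>0$ and the weak-$*$ lower semicontinuity of the mass on the localized support together with the bound $0\leq\rho\leq\rho_{\max}$), the support localization combined with the Lipschitz continuity of $H'$ and $H''$ (guaranteed by Assumption~\eqref{eq:hypothesis_H}) yields
\[
\sup_{z\in\mathbb{R}}\left|\int_{\mathbb{R}} \frac{n(t,y)}{\rho(t)} H'(z-y)\,dy - H'(z-\bar z(t_0))\right| \leq \|H''\|_\infty\,\epsilon,
\]
and similarly with $H''$ and $\|H'''\|_\infty\,\epsilon$ on the right. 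This yields the claimed continuity of $\partial_z F(t,\cdot)$ and $\partial_{zz}^2 F(t,\cdot)$ at $t_0$, uniformly in $z$; the same argument applied around any nearby time $t_1$ where $u(t_1,\cdot)$ still has a unique zero (which holds on the whole neighbourhood by Step~1 above) extends continuity to the full neighbourhood.

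The main delicate point is the positivity of $\rho$ near $t_0$, since otherwise $\Phi$ is not even defined; this requires invoking $\rho(t_0)>0$ (implicit in the monomorphic regime of Theorem~\ref{thm:limit_epsilon_non_concave}, where $\rho(t)=R(\bar z(t))>0$ before the extinction time) and the fact that the total mass cannot drop to zero instantaneously given the bound $\varepsilon\rho_\varepsilon'\geq -\rho_\varepsilon^2+(\text{controlled terms})$. Beyond this, the argument is essentially a soft one: support localization $+$ smoothness of $H$ $=$ uniform-in-$z$ continuity in $t$.
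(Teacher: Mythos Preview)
Your argument is correct and follows essentially the same route as the paper: localize $\supp(n(t,\cdot))$ in a small interval around $\bar z(t_0)$ via the continuity of $u$ and the quadratic upper bound, then use that $n(t,\cdot)/\rho(t)$ is a probability measure together with the Lipschitz continuity of $H'$ and $H''$ to compare the convolution with $H'(z-\bar z(t_0))$ (resp.\ $H''$) uniformly in $z$. Your explicit attention to the tail control and to the positivity of $\rho$ near $t_0$ are points the paper's proof leaves implicit.
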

	\begin{proof}
		Recall that, by proposition~\ref{prp:regularity}, $u$ is continuous in time. Fix $z\in\R$ and choose now any $\sigma>0$. Since the measure $n$ is supported in the set of zeroes of $u$, and thanks  to the continuity of the solution $u$ and the uniform bounds \eqref{uniform-bound-u}, we have that, given this $\sigma$, there must exists a $\delta>0$ such that, if $|t_0-t|\leq 2\delta$, then
		\[
		\supp(n(t,\cdot))\subseteq [\z(t_0)-\sigma, \z(t_0)+\sigma].
		\]
		This means that given any $\sigma>0$ there exists a $\delta>0$ such that, if $|t_0-t|\leq 2\delta$ and $|t_0-s|\leq 2\delta$, then
		\[
		\begin{aligned}
			&\left| \partial_z   F(t, z) - \partial_z F(s, z)  \right|\leq \tau \left| \int_{\z(t_0)-\sigma}^{\z(t_0)+\sigma}\frac{n(t,y)}{\rho(t)} H'(z-y) \ {\rm d}y- H'\left(z-\z(t_0)\right)  \right|\\
			&+\tau \left| \int_{\z(t_0)-\sigma}^{\z(t_0)+\sigma}\frac{n(s,y)}{\rho(s)} H'(z-y) \ {\rm d}y- H'\left(z-\z(t_0)\right)  \right|\\
			&\leq2 \tau\sup\limits_{\eta\in [-\sigma,\sigma]} \left| H'(z-\z(t_0)+\eta)-H'\left(z-\z(t_0)\right)\right|\leq {C}\tau\sigma,
		\end{aligned}
		\]
		since $\frac{n(t,\cdot)}{\rho(t)}$ integrates to 1 and $H'$ is a  Lipschitz continuous function. We deduce that the function $\partial_z F$ is continuous in time in a neighbourhood of $t_0$, at any point $z\in\R$. The continuity of $\partial_{zz}^2 F$ can be proven following similar arguments.
	
\end{proof}
	
	Having established the previous lemma, let us define
	\begin{equation}\label{def:Tc}
	\begin{array}{c}
	T_c:=\sup\Big\{t>0 : \text{for all $s\in [0,t]$, $u(s,z)$ has a unique maximum point at $\z(s)$},\\[10pt]
	\text{$\partial_{zz}^2 u(s,\z(s))<0$, $\z(\cdot)\in C^1([0,t])$,  and	 
	 $\z'(t)=\frac{2g}{|\partial_{zz}^2 u(t,\bar{z}(t))|}(\mu-\z(t))$ in $[0,t]$}\Big\}.
	\end{array}
	\end{equation}
	 We next prove that $T_c$ is greater than $0$.
	\begin{lem}\label{lem:T_m>0}
		Let conditions of Theorem~\ref{thm:limit_epsilon_non_concave} hold. Then {$T_c>0$}.
	\end{lem}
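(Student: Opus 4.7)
The strategy is to localize around the initial point $(0,z_0)$ and apply the local-concavity framework of Section~\ref{sec:Dyn_Prog} in a small cylinder $[0,T_0]\times\Omega_0$, bootstrapping the required local concavity of $F$ from the initial conditions.

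First I would exploit~\eqref{eq:hypothesis_u_0} to note that $u_0$ is strictly concave, with $u_0''\leq -C_1<0$ and a unique global maximum at $z_0$ where $u_0(z_0)=0$. Fix a small $r>0$ and set $\Omega_0=(z_0-r,z_0+r)$. By the local-uniform continuity of $u$ (Proposition~\ref{prp:regularity}(iv)), the strict concavity of $u_0$, and~\eqref{eq:support}, one can choose $T_0>0$ such that $\mathrm{supp}\,n(t,\cdot)\subset\Omega_0$ for all $t\in[0,T_0]$. Using $H\in C^3$ and $H''(0)=0$, a Taylor expansion gives $|H''(z-y)|\leq 2r\,\|H'''\|_\infty$ for $z\in\Omega_0$ and $y\in\mathrm{supp}\,n(t,\cdot)$, whence
\[
|\partial^2_{zz}\Phi(t,z)|\leq 2r\tau\|H'''\|_\infty,\qquad (t,z)\in[0,T_0]\times\Omega_0.
\]
For $r$ small enough this yields the local concavity $\partial^2_{zz}F(t,z)=-2g+\partial^2_{zz}\Phi(t,z)\leq -g<0$ on $[0,T_0]\times\Omega_0$. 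Moreover $F(0,z_0)=R(z_0)-\rho_0=0$ by~\eqref{eq:hypothesis_rho}, while Lemma~\ref{lem:deriv_F_cont} provides the continuity of $\partial_z F$ in time at $t=0$; up to shrinking $T_0$, the remaining structural hypotheses of Section~\ref{sec:Dyn_Prog} (positivity of $F$ in $\Omega_0$, non-vanishing of $\partial_z F$ on $\partial\Omega_0$) are thus also verified.

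Next I would adapt the concavity-propagation argument of Step~4 of Section~\ref{sec:Dyn_Prog_1}. The key ingredient there---that optimal trajectories of the representation formula~\eqref{eq:dyn_prog} ending inside $\Omega_0$ have remained inside $\Omega_0$---is here obtained directly by the energy bound~\eqref{eq:bound_size_trajectory}: any optimal trajectory ending at $z\in(z_0-r/2,z_0+r/2)$ at time $t\leq T_0$ satisfies $|\gamma(s)-z|\leq C\sqrt{T_0}$, which is less than $r/2$ for $T_0$ small. Hence $u$ is strictly concave in $z$ on $[0,T_0]\times(z_0-r/2,z_0+r/2)$, and Steps~3 and~9 of Section~\ref{sec:Dyn_Prog_1} yield $u\in L^\infty_{\rm loc} W^{3,\infty}_{\rm loc}$ together with $C^2$ regularity in $z$. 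The strict negativity of $u_0$ away from $z_0$ together with the local-uniform continuity of $u$ forces the global maximum of $u(t,\cdot)$ on $\R$ to lie in $(z_0-r/2,z_0+r/2)$ for $t\in[0,T_0]$; combined with the strict concavity just obtained, this maximum is unique---call it $\bar z(t)$---with $u(t,\bar z(t))=0$, and $\bar z\in C^1([0,T_0])$ by the implicit function theorem applied to $\partial_z u(t,\bar z(t))=0$.

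Finally, since $u(t,\cdot)$ has a unique global maximum at $\bar z(t)$ with value $0$, the limit measure is monomorphic on $[0,T_0]$: $n(t,\cdot)=\rho(t)\delta_{\bar z(t)}$ and hence $\Phi(t,z)=\tau H(z-\bar z(t))$. Differentiating the Hamilton--Jacobi equation in $z$ and evaluating at $\bar z(t)$, exactly as in Step~6 of Section~\ref{sec:Dyn_Prog_1}, gives
\[
\bar z'(t)=\frac{\partial_z F(t,\bar z(t))}{|\partial^2_{zz}u(t,\bar z(t))|}=\frac{-2g\bar z(t)+\tau H'(0)}{|\partial^2_{zz}u(t,\bar z(t))|}=\frac{2g(\mu-\bar z(t))}{|\partial^2_{zz}u(t,\bar z(t))|},
\]
since $H'(0)=1$ and $\mu=\tau/(2g)$. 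All conditions in~\eqref{def:Tc} are therefore satisfied on $[0,T_0]$, which proves $T_c\geq T_0>0$. The main obstacle is the circular dependence $F\leftrightarrow n\leftrightarrow u$: the concavity of $F$ needed to apply Section~\ref{sec:Dyn_Prog} depends on the support of $n$, which is itself determined by the unknown $u$. The resolution is the bootstrap above, in which the strict concavity of $u_0$, the continuity of $u$, and the cancellation $H''(0)=0$ jointly force $\partial^2_{zz}\Phi$ to be arbitrarily small on any sufficiently small neighborhood of $z_0$ at sufficiently small times.
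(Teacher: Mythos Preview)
Your approach is essentially the paper's: localize around $z_0$, use the smallness of $H''$ near $0$ together with Lemma~\ref{lem:deriv_F_cont} to get local strict concavity of $F$ for short times, confine optimal trajectories via the energy bound~\eqref{eq:bound_size_trajectory}, then invoke Steps~3--4 of Section~\ref{sec:Dyn_Prog_1} to obtain strict concavity and regularity of $u$ near $\bar z(t)$, and finally derive the ODE for $\bar z$. The only geometric difference is that the paper works on a one-sided region $[z_m,+\infty)$ (using $H''(z-z_0)<0$ for $z>z_0$) and a barrier trajectory through an intermediate point, while you use a symmetric interval $(z_0-r,z_0+r)$ and the raw $C\sqrt{T_0}$ bound; both are valid.

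One point needs correction. You assert $u(t,\bar z(t))=0$ in order to conclude $n(t,\cdot)=\rho(t)\delta_{\bar z(t)}$, but at this stage nothing guarantees $\max_z u(t,\cdot)=0$ (only $u\le 0$ is known from Theorem~\ref{thm:limit_epsilon}); the paper explicitly remarks that this constraint is \emph{not} used in deriving~\eqref{eq:z_is_Lipschitz} and postpones it to Lemma~\ref{lem:valrho}. The right way to obtain $\Phi(t,z)=\tau H(z-\bar z(t))$ here is to note that the probability measures $n_\varepsilon/\rho_\varepsilon=e^{u_\varepsilon/\varepsilon}/\int e^{u_\varepsilon/\varepsilon}$ concentrate at the unique nondegenerate maximum $\bar z(t)$ of $u(t,\cdot)$ by Laplace's method, independently of whether that maximum value is $0$. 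Also, your parenthetical ``positivity of $F$ in $\Omega_0$'' is neither a hypothesis of Section~\ref{sec:Dyn_Prog} (the relevant condition is~\eqref{hyp2}) nor true on your symmetric interval, but since you replace Step~2 by the direct energy bound this is harmless.
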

	\begin{proof}
		The function $u$ is continuous in time, implying that in small times no new maximum points can appear at a positive distance of $z_0$ thanks to~\eqref{eq:hypothesis_u_0}. However, it could happen that immediately after the initial time, the maximum point at $z_0$ divides into several, or that the maximum point extends itself into a flat interval. In both cases the number of maximum points of $u$ would augment in a neighbourhood of the point $(t,z)=(0,z_0)$. Our goal is to show that this cannot be the case and we will do so by following the same ideas from Steps 1, 2 and 4 of Section~\ref{sec:Dyn_Prog_1} in order to conclude that $u$ is strictly concave.
		
		At the initial time, due to~\eqref{eq:hypothesis_u_0}, $u_0$ has a unique maximum point at $z_0$ and thus
		\[
			\partial_{zz}^2F(0,z)=-2g+\tau H''(z-z_0).
		\]
		Moreover, by Lemma~\ref{lem:deriv_F_cont} we notice that, even though $F$ may be discontinuous in time, there   exists a time $t_0>0$ small such that $\partial_{zz}^2 F$ is continuous for all $(t,z)\in [0,t_0)\times \R$. 
		%		On the other hand for each fixed $t\in[0,t_0]$, the function $F(t,z)$ is continuous in the $z$-variable. 
		Now we note that, by hypothesis~\eqref{eq:hypothesis_H} and due to the fact that $\partial_{zz}^2F(0,z_0)=-2g<0$, there exists a point  $z_g<z_0$ such that $\partial_{zz}^2F(0,z)<-\f 32g$ for all $z>z_g$. Since $\partial_{zz}^2F(t,z)$ is continuous for all $(t,z)\in [0,t_0)\times \R$ we conclude that there   exists a time $t_1<t_0$ and a continuous curve $z_g(t)$ such that
		\[
		z_g(0)=z_g,\ z_g(t)<z_0\text{ for all }t\in[0,t_1]\ \text{and}\ \partial_{zz}^2 F(t,z)<-g \text{ for all }z>z_g(t), t\in[0,t_1].
		\]
		and, moreover, since $u$ is locally equi-continuous and initially strictly concave, we can choose $t_1$ small enough such that
		\[
		z_g(t)<\z_i(t) :=\inf\{z\in\R :  u(t,z)=\max\limits_{w\in\R} u(t,w)\}\quad\text{for all}\quad t\in[0,t_1],
		\]
		i.e., $\z_i$ is the smallest maximum point.	For convenience, let us define	the set
		\[
		\mathcal{S}(t_1,g):=\{(s,z)\in [0,t_1]\times \R :  z>z_g(s)\},
		\]
		The strict concavity of $u_0$ and $F$ in the set $\mathcal{S}(t_1, g)$   will indeed lead to the strict concavity of $u$ in a certain set.
		
		Now we adapt the arguments of  Section~\ref{sec:Dyn_Prog_1} to prove the strict concavity of $u$ in a certain set. We first notice   that 	$F$ is continuous in the $z$ variable and bounded from above by $1 +\tau$. Therefore, we can apply similar arguments  from Step 1 of Section~\ref{sec:Dyn_Prog_1} to prove that, given $(t,z)\in\mathcal{S}(t_1,g)$ there exists a trajectory $\gamma_z$ such that $\gamma_z(t)=z$ and $u(t,z)=f_t(\gamma_z)$, where $f_t$ comes from~\eqref{eq:dyn_prog}. Moreover, this trajectory satisfies the system~\eqref{eq:EL} from Step 2 of Section~\ref{sec:Dyn_Prog_1}, which is coercive whenever $F$ is strictly concave, and thus $\gamma_z$ will be unique if $z>z_g(t)$, $t\leq t_1$, as long as $\gamma_z(s)\geq z_g(s)$ for any $s\in[0,t]$. The idea now is to follow Step 4 of Section~\ref{sec:Dyn_Prog_1} to prove strict concavity, but in order to apply the same ideas we have to ensure that $\gamma_z(s)>z_g(s)$ for all $s\in[0,t]$. To prove this property, we cannot rely entirely on the ideas presented at Step 2 of Section~\ref{sec:Dyn_Prog_1}, since~\eqref{hyp2} is not satisfied in general. We will obtain the desired property by constructing suitable left barriers for these trajectories instead.
		
		Again, since $z_g(0)<\z_i(0)$ and both curves $z_g$ and $\z_i$ are continuous at $t=0$, we can choose $t_1$ small enough such that
		\begin{equation}\label{eq:distance_z_g_z_i}
			\inf\limits_{0\leq s\leq t_1} \z_i(s) - \sup\limits_{0\leq s\leq t_1} z_g(s) > 2C\sqrt{t_1},
		\end{equation}
		with $C$ the constant given in \eqref{eq:bound_size_trajectory}.
		In particular, if we define the intermediate point
		\[
		z_m:=\frac{1}{2}\cdot \left(\sup\limits_{0\leq s\leq t_1} z_g(s) + \inf\limits_{0\leq s\leq t_1} \z_i(s)\right),
		\]
		then, given any time $t\in[0,t_1]$, any trajectory $\gamma_{z_m}$ such that $\gamma_{z_m}(t)=z_m$, satisfies, due to~\eqref{eq:bound_size_trajectory} and~\eqref{eq:distance_z_g_z_i}, that
		\[
		\sup\limits_{0\leq r\leq t} z_g(r)<\gamma_{z_m}(s)<\inf\limits_{0\leq r\leq t} \z_i(r)\quad\text{for all}\quad s\in[0,t].
		\]
		As a consequence, $(\cdot,\gamma_{z_m}(\cdot))\subset \mathcal{S}(t_1,g)$ and it is the unique optimal trajectory that passes through its points. Therefore, if $z>z_m$, then $\gamma_z(s)\geq\gamma_{z_m}(s)$ for all $s\in[0,t],\ t\leq t_1$. In particular, ${(\cdot,\gamma_z(\cdot))}\subset \mathcal{S}(t_1,g)$ and thus, following Step 4 of Section~\ref{sec:Dyn_Prog_1}, $u(t,z)$ is strictly concave in the set $[0,t_1]\times [z_m,\infty)$, with $\partial_{zz}^2 u(t,z)\leq -C<0$ in $[0,t_1)\times [z_m,+\infty)$. Moreover, the continuity and the strict concavity of the solution imply that $u(t,\cdot)$ has a unique maximum point $\z(t)$ for all $t\in[0,t_1]$.
		
		By adapting the ideas in Step 6 of Section~\ref{sec:Dyn_Prog_1}, see also~\cite{Mirrahimi-Roquejoffre - 2016}, we can derive the formula~\eqref{can-eq} and prove that 
	\begin{equation}\label{eq:z_is_Lipschitz}
		\bar{z}\in C^{0,1}([0,T_c)),\text{ with }\begin{cases} \displaystyle\bar{z}'(t)=-\frac{\partial_{z} F(t,\z(t))}{\partial_{zz}^2 u(t,\bar{z}(t))} = \frac{2g}{|\partial_{zz}^2 u(t,\bar{z}(t))|}(\mu-\z(t)),\\
		\z(0)=z_0.
		\end{cases}
	\end{equation}
	We omit the details for the sake of brevity, but we highlight that one do not need to assume $\max u=0$ (which is a condition appearing in problem~\eqref{eq:1}) in order to develop the analysis leading up to formulas~\eqref{can-eq} or~\eqref{eq:z_is_Lipschitz}. We hence conclude that $T_c\geq t_1>0$.
	\end{proof}
	
	Notice from   the ODE in~\eqref{eq:z_is_Lipschitz} that
	\[
	\z(0)<\z(t)<\mu\quad\text{and}\quad \z'(t)>0\quad\text{ for all }t\in(0,T_c).
	\]
	Moreover, from~\eqref{eq:support}, we conclude that, for all $t\in[0,T_c)$,
	\[
	n(t,z)=\rho(t)\cdot \delta_{\bar{z}(t)}(z)\quad\text{with}\quad \rho(t)\in[0,\rho_{max}].
	\]
	The next step is to characterize the quantity $\rho(t)$.

	\begin{lem}
	\label{lem:valrho}
	Let conditions of Theorem~\ref{thm:limit_epsilon_non_concave} hold. Then  $\rho_\e$ converges pointwise in $(0,T_c)$ to $\rho(t)=\max(R(\bar z(t)),0)=\max(1-g\bar z(t)^2,0)$. Moreover, as long as $\rho(t)>0$, we have $\max\limits_{z\in\R}u(t,z)=u(t,\z(t))=0$.
	\end{lem}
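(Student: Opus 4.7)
My plan is to extract both assertions directly from the Hamilton--Jacobi equation~\eqref{eq:viscosity_u}, by exploiting the monomorphic structure $n(t,z)=\rho(t)\,\delta_{\bar{z}(t)}(z)$ on $[0,T_c)$ established in the preceding lemma, the support inclusion in~\eqref{eq:support}, and the regularity of $u$ and $\bar{z}$ derived in the analysis leading to~\eqref{eq:z_is_Lipschitz}.

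I would begin by proving the implication $\rho(t)>0 \Rightarrow \max_{z\in\R}u(t,z)=u(t,\bar{z}(t))=0$. Since the support of $n(t,\cdot)=\rho(t)\,\delta_{\bar{z}(t)}$ is exactly $\{\bar{z}(t)\}$ whenever $\rho(t)>0$, the inclusion $\supp(n(t,\cdot))\subseteq\{u(t,\cdot)=0\}$ from~\eqref{eq:support} forces $u(t,\bar{z}(t))=0$, which combined with the global bound $u\leq 0$ from Theorem~\ref{thm:limit_epsilon} gives the constraint. Next, on the open subset of $(0,T_c)$ where $\rho(t)>0$, I would differentiate the identity $u(t,\bar{z}(t))\equiv 0$ in time using the $C^1$ regularity of $u$ near the curve $t\mapsto(t,\bar{z}(t))$ (obtained in the previous lemma in the spirit of Theorem~\ref{thm:section_Dyn_Prog_1}) and the Lipschitz regularity of $\bar{z}$ from~\eqref{eq:z_is_Lipschitz}. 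Since $\partial_z u(t,\bar{z}(t))=0$ at a maximum, the chain rule yields $\partial_t u(t,\bar{z}(t))=0$ almost everywhere. Evaluating~\eqref{eq:viscosity_u} at $z=\bar{z}(t)$ and using that in the monomorphic regime $\Phi(t,z)=\tau H(z-\bar{z}(t))$ with $H(0)=0$, I would obtain
\[
0=\partial_t u(t,\bar{z}(t))=|\partial_z u(t,\bar{z}(t))|^2+R(\bar{z}(t))-\rho(t)+\Phi(t,\bar{z}(t))=R(\bar{z}(t))-\rho(t),
\]
so that $\rho(t)=R(\bar{z}(t))=1-g\bar{z}(t)^2$ on the positivity set of $\rho$.

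To upgrade this to the formula $\rho(t)=\max(R(\bar{z}(t)),0)$ on all of $(0,T_c)$, I would combine the continuity of $t\mapsto R(\bar{z}(t))$, the strictly positive initial condition $\rho(0)=R(z_0)>0$ from~\eqref{eq:hypothesis_rho}, and the non-negativity $\rho\geq 0$ from~\eqref{bound-rho}. These imply that the positivity set of $\rho$ is a maximal subinterval of $[0,T_c)$ starting at $t=0$ on which $\rho=R(\bar{z})>0$, and that once $R(\bar{z})$ reaches $0$ the mass $\rho$ is forced to vanish (this occurs only in case (ii) of Theorem~\ref{thm:limit_epsilon_non_concave}, at the extinction time $T_\rho$ where $\bar{z}(T_\rho)=1/\sqrt g$). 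The pointwise convergence of $\rho_\varepsilon$ in $(0,T_c)$ then follows from the already-established weak-$\ast$ convergence along subsequences together with the unique characterization of the limit as the continuous function $\max(R(\bar{z}(\cdot)),0)$: any weak-$\ast$ limit must equal this continuous function, and weak-$\ast$ convergence of a bounded sequence to a continuous function is equivalent to pointwise convergence at every continuity point of the limit.

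The main obstacle I anticipate is the rigorous justification of $\ddt u(t,\bar{z}(t))=0$, since a priori $u$ is only continuous and $\bar{z}$ is only Lipschitz globally. The resolution is exactly the content of the previous lemma: in the monomorphic regime, the strict negativity of $\partial^2_{zz}u(t,\bar{z}(t))$ together with the extension of the method of Section~\ref{sec:Dyn_Prog} already provides $C^1$ regularity of $u$ in a space-time neighborhood of the maximum curve, so that the chain rule applies in the classical sense (almost everywhere in time suffices, since both sides of the resulting identity $\rho=R(\bar{z})$ are continuous on the positivity set).
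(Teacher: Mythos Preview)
Your strategy contains a genuine gap that prevents it from delivering the pointwise convergence the lemma asserts. The final step claims that ``weak-$\ast$ convergence of a bounded sequence to a continuous function is equivalent to pointwise convergence at every continuity point of the limit.'' This is false: $\rho_\varepsilon(t)=\cos(t/\varepsilon)$ converges weak-$\ast$ in $L^\infty(\R^+)$ to the continuous function $0$ but converges pointwise at no $t>0$. Since $\varepsilon\rho_\varepsilon'=O(1)$, the family $\rho_\varepsilon$ could in principle oscillate on scale $\varepsilon$, so no equicontinuity or bounded-variation argument is available either. Identifying the weak-$\ast$ limit $\rho$ almost everywhere (which is the most your Hamilton--Jacobi argument can give, since the equation~\eqref{eq:viscosity_u} holds with an $L^1_t$ Hamiltonian) does not imply that $\rho_\varepsilon(t)\to\rho(t)$ for each $t$.

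There is also a circularity in the first step: at this stage $\rho$ is only an element of $L^\infty(\R^+)$, defined a.e.\ as a weak-$\ast$ limit, so the statement ``$\rho(t)>0$'' for a fixed $t$ has no meaning until pointwise convergence is established. Likewise $n(t,\cdot)$ for fixed $t$ is not yet a well-defined measure, so invoking~\eqref{eq:support} at a single time is delicate. The paper resolves this by taking the opposite route: it first proves pointwise convergence of $\rho_\varepsilon$ directly, by integrating~\eqref{eq:main} to obtain the fast ODE
\[
\varepsilon\rho_\varepsilon'(t)=-\rho_\varepsilon(t)^2+\rho_\varepsilon(t)\bigl(R(\bar z(t))+o(1)\bigr),
\]
and then comparing $J_\varepsilon=\ln\rho_\varepsilon$ with the explicit solution $\mathcal J(t/\varepsilon,\bar z(t))$ of the rescaled autonomous ODE $\partial_s\mathcal J=R(z)-e^{\mathcal J}$. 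A Gronwall-type estimate shows $|J_\varepsilon(t)-\mathcal J(t/\varepsilon,\bar z(t))|\to 0$, and since $\mathcal J(s,z)\to\ln R(z)$ (resp.\ $-\infty$) as $s\to\infty$ when $R(z)>0$ (resp.\ $R(z)\le 0$), the pointwise limit $\rho(t)=\max(R(\bar z(t)),0)$ follows. Only after this does the paper derive the constraint $u(t,\bar z(t))=0$, via the integral relation $\rho_\varepsilon=\int e^{u_\varepsilon/\varepsilon}$ and the upper Gaussian bound~\eqref{uniform-bound-u}.
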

	\begin{proof}
		
	Let $t\in(0,T_c)$. Integrating in the whole $\R$ in~\eqref{eq:main} and by arguments similar to the ones in Lemma~\ref{lem:rho_phi_weak_convergence}, we have
	\[
	\varepsilon \rho_\varepsilon'(t)=-\rho_\varepsilon(t)^2 +\int_\R n_\varepsilon(t,z)R(z){\rm dz}.
	\]
	From the convergence of $n_\varepsilon$ we deduce that, for $\varepsilon$ small enough,
	\[
	\varepsilon \rho_\varepsilon'(t)=-\rho_\varepsilon(t)^2 +\rho_\varepsilon(t)(R(\z(t))+o(1))
	\]
	and we recall that $0\leq \rho_\varepsilon\leq \rho_{max}$
	
	We define now the auxiliary quantity
	\[
	J_\varepsilon(t):=\ln(\rho_\varepsilon(t)).
	\]
	Replacing this in the previous equation, we obtain that
	\[
	\varepsilon J'_\varepsilon(t)=R(\bar{z}(t))-e^{J_\varepsilon(t)}+o(1).
	\]
	
	We study the behaviour of $\rho_\varepsilon$ by studying the one of $J_\varepsilon$. To this end, we define the function $\mathcal{J}(t,z)$ as the unique solution of the ODE 
		\[
	\begin{cases}
		\partial_s \mathcal{J}(s,z)=R(z)-e^{\mathcal{J}(s,z)},\quad&\text{for all }z\in D_R,\ s\geq 0\\
		\mathcal{J}(0,z)=\mathcal{J}_0 &\text{for all } z\in D_R.
	\end{cases}
	\]
	A solution of such ODE is of the form
	\[
	\mathcal{J}(s,z)= \ln(R(z)) + R(z)s-\ln\left(R(z)e^{-\mathcal{J}_0} + e^{R(z)s}-1\right)
	\]
	
	and, moreover,  
	\begin{equation}
	\label{limJ}
	\begin{cases}
	\mathcal{J}(s,z)\to \ln(R(z))&\text{as}\; s\to \infty \text{ and if $R(z)>0$},\\
	\mathcal{J}(s,z)\to -\infty&\text{as}\; s\to \infty \text{ and if $R(z)\leq 0$}.
	\end{cases}
	\end{equation}
	The proof of the previous claim is rather simple and we omit it for the sake of brevity. We are ready now to compute
	\[
	\varepsilon\cdot\partial_t\left[\mathcal{J}\left(\frac{t}{\varepsilon},\bar{z}(t)\right) - J_\varepsilon(t)\right]=e^{J_\varepsilon(t)}-e^{\mathcal{J}\left(\frac{t}{\varepsilon},\bar{z_\varepsilon}(t)\right)}+\varepsilon\bar{z}'(t)\cdot \partial_z \mathcal{J}\left(\frac{t}{\varepsilon},\bar{z}(t)\right)+o(1),
	\]
	but, by~\eqref{eq:z_is_Lipschitz}, the term	 $\bar{z}'(t)$ is uniformly bounded, meaning that we have, in fact,
	\[
	\varepsilon\cdot\partial_t\left[\mathcal{J}\left(\frac{t}{\varepsilon},\bar{z}(t)\right) - J_\varepsilon(t)\right]=e^{J_\varepsilon(t)}-e^{\mathcal{J}\left(\frac{t}{\varepsilon},\bar{z}(t)\right)}+o(1).
	\]
	Multiplying by the sign of the term between brackets we obtain
	\[
	\begin{aligned}
		\varepsilon\cdot\partial_t\left|\mathcal{J}\left(\frac{t}{\varepsilon},\bar{z}(t)\right) - J_\varepsilon(t)\right|&=-\left|e^{J_\varepsilon(t)}-e^{\mathcal{J}\left(\frac{t}{\varepsilon},\bar{z}(t)\right)}\right|+o(1)\\
		&\leq -k\left|  \mathcal{J}\left(\frac{t}{\varepsilon},\bar{z}(t)\right)-J_\varepsilon(t)\right|+o(1),
	\end{aligned}
	\]
	for a certain positive constant $k$. Notice that using the bounds for $\rho_\varepsilon$ which lead to $J_\varepsilon\in (-\infty, \ln(\rho_{max})]$, we are allowed to use the Lipschitz nature of the exponential function.
	From this last inequality and the definition of $J_\varepsilon(t)$ we deduce that
	\[
	\left|\ln(\rho_\varepsilon(t))-\mathcal{J}\left(\frac{t}{\varepsilon},\bar{z}(t)\right) \right|\leq \left| \ln(\rho_\varepsilon(0))-\mathcal{J}\left(0,\bar{z}(0)\right)\right|e^{-\frac{kt}{\varepsilon}}+o(1).
	\]
	Combining this inequality with \eqref{limJ}, we conclude that
	\[
	\rho_\varepsilon(t)\to \rho(t):=\max(R(\bar{z}(t)),0)\quad\text{for all }t\in(0,T_c),\quad\text{as }\varepsilon\to 0.
	\]
		We hence proved that $\rho(t)$ is a continuous function in $[0,T_c)$. From~\eqref{eq:hypothesis_rho} and $\z(0)=z_0\in D_R$ we deduce that, at least for small times, $0<\rho(t)\leq \rho_{max}$. We then pass to the limit $\e\to 0$ in the  the relation~\eqref{rho-u} and use the upper bound in \eqref{uniform-bound-u} to deduce that
	\[
	\max\limits_{z\in\R}u(t,z)=u(t,\z(t))=0.
	\] 
	\end{proof}
	
 We can therefore define  
	\[
	\begin{array}{c}
	T_m:=\sup\Big\{t>0 : \text{for all $s\in [0,t]$, $u(s,z)$ has a unique maximum point at $\z(s)$,}\\[10pt]
		\text{$u(s,\z(s))=0$, $\partial_{zz}^2 u(s,\z(s))<0$, $\z(\cdot)\in C^1([0,t])$,  and}\\[10pt]	 
	 \z'(t)=\frac{2g}{|\partial_{zz}^2 u(t,\bar{z}(t))|}(\mu-\z(t)) \text{ in } [0,t]\Big\}.
	\end{array}
	\]
	and deduce that $T_m>0$. For $t\in [0,T_m]$, $u(t,\cdot)$ is locally strictly concave with respect to $z$ in a neighbourhood of its maximum point located at the continuous curve $\z(t)$, satisfying the constraint $\max u(t,z)=0$. Moreover, since $\z(t)$ is the unique maximum point of $u(t,\cdot)$, for all $t\in [0,T_m)$, and ${\rm supp} \;n(t,\cdot)\subset \{z :  u(t,z)=\max_{y\in\R} u(t,y)=0\}=\z(t)$, we obtain that $n(t,z)$ must be a Dirac delta with mass $\rho(t)$ located at the point $\z(t)$, and therefore $\Phi(t,z)=\tau H(z-\z(t))$. We deduce that, for $t\in[0,T_m)$,
$$
\partial_t u=(\partial_z u)^2 + F(t,z),
$$
with 
\[
\label{FR=z2}
F(t,z)=-g(z^2-\bar{z}(t)^2)+\tau H(z-\bar{z}(t)).
\]
Note that thanks to the continuity of $\z$ in $ [0,T_m)$, $F(t,z)$ is continuous with respect to $t$  for $t\in  [0,T_m)$.
We then focus on the following problem
		\begin{equation}\label{eq:v_ham_jac}
		\begin{cases}
			\partial_t u(t,z)=(\partial_z u(t,z))^2 -g(z^2-\bar{z}(t)^2)+\tau H(z-\bar{z}(t)),\quad &t\in[0,T_m), z\in\R,\\
			\sup\limits_{z\in \R} u(t,z)=u(t,\bar{z}(t))=0, &t\in[0,T_m),\\
			u(0,z)=u_0(z), &z\in\R.
		\end{cases}
	\end{equation}
	We will prove that, if for all $t\in [0,T_m]$, $\rho(t)=R(\z(t))>0$, then $T_m=\infty$. We will do so by assuming that $T_m$ is finite and proving that, indeed, there are posterior times, after $T_m$, where $\bar{z}(t)$ is well-defined, unique and continuous with respect to $t$, which is in contradiction with the definition of $T_m$. Also, since for these times the curve $\z(t)$ is unique, we will write $\z(0)$ or $z_0$ indistinctively. We prove below, under a strict concavity assumption at $T_m$, that the only possibility for loosing monomorphism is that new zeroes of $u$ appear at time $T_m$ far away from the old one.
	Note that by equation~\eqref{eq:z_is_Lipschitz} we have that the value
	\[
	\z(T_m):=\lim\limits_{t\nearrow T_m} \z(t)
	\]
	is well defined.

\begin{lem}\label{lem:T_m=infty}
	Let conditions of Theorem~\ref{thm:limit_epsilon_non_concave} hold and assume that  $T_m<\infty$, that $\partial_{zz}^2u(T_m,\cdot)\leq -C<0$ in $(z_m,+\infty)$, for some $z_m<\z(T_m)$, and that for all $t\in [0,T_m]$, $\rho(t)=R(\z(t))>0$.
	Then, $u(T_m,\cdot)$ has a new zero point at a positive distance of $\z(T_m)$.
\end{lem}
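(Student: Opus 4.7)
The plan is to argue by contradiction: assume that all zeroes of $u(T_m,\cdot)$ lie in a neighbourhood of $\bar z(T_m)$, and then extend the continuous monomorphic regime strictly beyond $T_m$, contradicting its supremum definition.

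First I would collect what holds at $t=T_m$. Thanks to the local equicontinuity of $u_\varepsilon$ (Proposition~\ref{prp:regularity}), the limit $u$ is continuous, so $\bar z(T_m):=\lim_{t\nearrow T_m}\bar z(t)$ is well defined and $u(T_m,\bar z(T_m))=0$. Since $\rho(t)=R(\bar z(t))>0$ on $[0,T_m]$, Lemma~\ref{lem:valrho} gives $\max_z u(T_m,z)=0$, while Theorem~\ref{thm:limit_epsilon} supplies the global inequality $u(T_m,z)\leq 0$. Combining $u(T_m,\cdot)\leq 0$ with the strict concavity assumption $\partial_{zz}^2 u(T_m,\cdot)\leq -C<0$ on $(z_m,+\infty)$ yields that $\bar z(T_m)$ is the \emph{unique} zero of $u(T_m,\cdot)$ inside $(z_m,+\infty)$: any other zero would force $u$ to become positive between them by strict concavity.

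Now suppose, for contradiction, that $u(T_m,z)<0$ for every $z\leq z_m$. Then $\bar z(T_m)$ is the unique zero of $u(T_m,\cdot)$ in all of $\R$. The plan is to reapply essentially verbatim the construction of Lemma~\ref{lem:T_m>0}, but with initial time $T_m$ and initial datum $u(T_m,\cdot)$: the initial concavity of $u_0$ around $z_0$ is replaced by the assumed strict local concavity of $u(T_m,\cdot)$ on $(z_m,+\infty)$, and $\partial_{zz}^2 F(T_m,\bar z(T_m))=-2g+\tau H''(0)=-2g<0$ propagates by the uniform-in-$z$ continuity in time of $\partial_{zz}^2 F$ established in Lemma~\ref{lem:deriv_F_cont} (valid because $u(T_m,\cdot)$ has a single zero and hence $n(T_m,\cdot)$ is concentrated at $\bar z(T_m)$). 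Together with the Euler--Lagrange/barrier argument that prevents optimal trajectories ending near $\bar z(T_m)$ from crossing the left barrier $z_g(t)$, this produces a $\delta>0$ on which a continuous curve $\bar z(t)$ of unique global maxima exists, satisfies the ODE in~\eqref{eq:z_is_Lipschitz}, and realises the constraint $u(t,\bar z(t))=0$ via the analogue of Lemma~\ref{lem:valrho}. This contradicts the definition of $T_m$ as a supremum, so a new zero of $u(T_m,\cdot)$ must exist at some $z^\star\leq z_m$, which sits at a positive distance $\bar z(T_m)-z_m>0$ from $\bar z(T_m)$.

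The main obstacle is Step~3, the actual local extension past $T_m$. The delicate point is that at $T_m$ the fitness $F(t,z)$ is a priori only continuous in $t$ from the left; we therefore need that, under the contradiction hypothesis, the trace of $n(T_m,\cdot)$ is still a single Dirac at $\bar z(T_m)$ (which follows from the single-zero property together with~\eqref{eq:support}), so that Lemma~\ref{lem:deriv_F_cont} can be invoked on a two-sided neighbourhood of $T_m$. Once this is granted, the barrier construction from Lemma~\ref{lem:T_m>0} transfers without change and closes the argument.
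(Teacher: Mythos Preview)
Your proposal is correct and follows essentially the same route as the paper: argue by contradiction, deduce that $\bar z(T_m)$ is the unique zero, invoke Lemma~\ref{lem:deriv_F_cont} at $T_m$ to get two-sided time continuity of $\partial_{zz}^2 F$, then rerun the construction of Lemma~\ref{lem:T_m>0} from initial time $T_m$ and close with Lemma~\ref{lem:valrho} to recover the constraint, contradicting the definition of $T_m$. Your explicit discussion of why Lemma~\ref{lem:deriv_F_cont} applies on a two-sided neighbourhood of $T_m$ (via \eqref{eq:support} and the single-zero hypothesis) is a point the paper leaves implicit, but otherwise the arguments coincide.
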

\begin{proof}
	We will argue by contradiction and in a similar way to Lemma~\ref{lem:T_m>0}. Suppose that $T_m<\infty$ and that no new zeroes appear at a positive distance from $\z(T_m)$ at time $T_m$.
	
	In this case, at time $T_m$ the value $\z(T_m)$ is the only zero point of $u$, implying that
	\[
	\partial_{zz}^2F(T_m,z)=-2g+\tau H''(z-\z(T_m)).
	\]
	Notice that by Lemma ~\ref{lem:deriv_F_cont} this function is continuous in time,  and hence negative, in $(T_m-\delta,T_m+\delta)\times (z_m',+\infty)$ for   some constants $z_m'<\bar z(T_m)$ and $\delta>0$.  Moreover,    $u(T_m,\cdot)$ has a single maximum point at $\z(T_m)$ and it  is strictly concave with respect to $z$ in $  (z_m,+\infty)$, with $z_m<\z(T_m)$. We can hence argument analogously to Lemma~\ref{lem:T_m>0} and prove   the required properties in the definition of $T_c$ in \eqref{def:Tc} for a small interval of time after $T_m$. Moreover, since $R(\z(T_m))>0$, and thanks to the continuity of $\z$ in such an interval, we deduce, thanks to Lemma \ref{lem:valrho}, that $\rho(t)>0$,  and hence $u(t,\z(t))=0$, in such an interval, making it smaller if necessary. This implies that $T_m$ could have been extended to higher values, which is   a contradiction.
\end{proof}

	Let us recall now some properties of the function $H$ obtained in~\cite[Section 4]{Garriz-Leculier-Mirrahimi} leading to the quantification of $\mu_1$. Hypothesis~\eqref{eq:hypothesis_H} implies, after some analysis, that
	\begin{equation}\label{eq:d1}
		\text{ there exists a unique positive value $d_1$ solving \;} d_1(1+H'(d_1))-2H(d_1)=0.
	\end{equation}
	Then, the threshold $\mu_1$ is given by
	\[
	\frac{2H(d_1)}{(1-H'(d_1))(1+H'(d_1))}=\frac{d_1}{1-H'(d_1)}=\mu_1.
	\]
	We recall that it was shown in \cite[Section 4.2]{Garriz-Leculier-Mirrahimi} that $\mu_1$ corresponds to the threshold on $\mu$ below which there exists a monomorphic stationary solution of~\eqref{eq:main}.
	Let us define the auxiliary function
		\begin{equation}\label{eq:varphi}
			\varphi(t,z):=\frac{F(t,z)}{2g},
		\end{equation}
		which has the advantage of simplifying certain computations. Now we define $x(t):=\mu-\bar{z}(t)$. Notice that $\varphi(t,\mu)=f(x(t),\mu)$ with  
		\[
		f(x,\mu)=\mu H(x)-x\left(\mu-\frac{x}{2}\right).
		\]
	\begin{lem}\label{lem:F(t,mu)_greater_0}
		Let conditions of Theorem~\ref{thm:limit_epsilon_non_concave} hold.\\
		(i) Then, $f(x,\mu)\geq 0$ for all  $\mu \leq \mu_1$ and $x\geq 0$. The equality holds only for $x=0$ or $x=d_1$ and $\mu=\mu_1$.\\
		(ii) Let $t\leq T_m$. Then $F(t,\mu)>0$ except when $\z(t)=\mu$ or $\z(t)=\mu_1-d_1$ and $\mu=\mu_1$. In these last two cases, we have $F(t,\mu)=0$. Consequently, $F(t,\mu)\geq 0$ for all $t\in[0,T_m],\  \mu \leq \mu_1$. 
	\end{lem}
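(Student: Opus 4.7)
My plan is to exploit the algebraic identity
\[
F_\mu(\mu - x) = -2g\,f(x,\mu),
\]
which reduces the lemma to properties of the function $F_\mu$ already characterized in the introduction and in \cite{Garriz-Leculier-Mirrahimi}. Indeed, substituting $y = \mu - z$ and using $H(-y) = -H(y)$ together with $\tau = 2g\mu$, a direct computation gives
\[
F_\mu(z) = -g(z^2-\mu^2) + 2g\mu H(z-\mu) = -2g\bigl[\mu H(y) - y(\mu - y/2)\bigr] = -2g\,f(y,\mu).
\]
In particular, the assertion ``$f(x,\mu) \geq 0$ for $x\geq 0$ with the stated equality cases'' is precisely the assertion ``$F_\mu(z) \leq 0$ for $z \leq \mu$ with the corresponding equality cases''.

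For part (i), the equivalence above together with the very definition of $\mu_1$ recalled in the introduction and the explicit description of the equality cases from \cite[Section 4.2]{Garriz-Leculier-Mirrahimi} immediately yields the result. If a self-contained argument is preferred, I would proceed in two steps. First, I would reduce to the case $\mu = \mu_1$ by noting that $\p_\mu f(x,\mu) = H(x) - x < 0$ for every $x > 0$ (a consequence of $H(0)=0$, $H'(0)=1$ and $H'' < 0$ on $(0,\infty)$, which imply $H(x) < x$ on $(0,\infty)$), so $f(\cdot,\mu)$ is strictly decreasing in $\mu$ on $(0,\infty)$. Second, at the threshold $\mu=\mu_1$ I would directly verify the two zeros $f(0,\mu_1) = 0$ (trivial) and $f(d_1,\mu_1)=0$ (by substituting $\mu_1 = d_1/(1-H'(d_1))$ into $f(d_1,\mu_1)$ and simplifying with the defining relation $2H(d_1) = d_1(1+H'(d_1))$), check that $\p_x f(0,\mu_1)=\p_x f(d_1,\mu_1)=0$ by the same substitutions, and then combine $\p^2_{xx}f(0,\mu_1) = 1 + \mu_1 H''(0) = 1 > 0$, the growth $f(x,\mu_1)\to +\infty$ as $x\to +\infty$, and a count of the zeros of $\p_x f(\cdot,\mu_1) = \mu_1 H'(x) + x - \mu_1$ (using the monotonicity and concavity-convexity pattern of $H'$ prescribed by \eqref{eq:hypothesis_H}) to rule out any further sign change.

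For part (ii), I would simply evaluate $F(t,\mu) = 2g\,f(x(t),\mu)$ with $x(t) = \mu - \bar z(t)$, as observed right before the lemma. The assumption $z_0\leq \mu$ and the ODE in \eqref{eq:z_is_Lipschitz} (which ensures $\bar z'(t) \geq 0$ whenever $\bar z(t)\leq \mu$) together imply that $\bar z(t)$ is non-decreasing on $[0,T_m]$ and remains in $[z_0,\mu]$, so $x(t) \in [0, \mu - z_0]$. Part (i) then yields $F(t,\mu)\geq 0$ with strict inequality unless $x(t) = 0$, i.e.\ $\bar z(t) = \mu$, or $x(t) = d_1$ with $\mu = \mu_1$, i.e.\ $\bar z(t) = \mu_1 - d_1$.

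The main obstacle is the classification of all nonnegative zeros of $f(\cdot,\mu_1)$ in the self-contained version of (i); either one defers to the detailed analysis in \cite[Section 4.2]{Garriz-Leculier-Mirrahimi}, or one carries out the critical-point count above, which hinges on the fine sign conditions on $H''$ and $H'''$ built into \eqref{eq:hypothesis_H}. Once (i) is settled, (ii) is an immediate substitution using the monotonicity of $\bar z(t)$ along $[0,T_m]$.
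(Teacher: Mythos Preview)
Your proposal is correct. The identity $F_\mu(\mu-x)=-2g\,f(x,\mu)$ is valid and, together with the characterization of $\mu_1$ recalled in the introduction, immediately gives part (i); part (ii) is then the same substitution the paper uses.

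Your self-contained argument for (i), however, follows a genuinely different route from the paper's. The paper argues by continuation in $\mu$: for $\mu$ small one has $\partial^2_{xx}f(x,\mu)=1+\mu H''(x)>0$, hence $f(\cdot,\mu)>0$ on $(0,\infty)$; one then defines $\mu_0$ as the first $\mu$ for which $f(\cdot,\mu)$ touches zero at some $x_0>0$, and the tangency conditions $f(x_0,\mu_0)=\partial_x f(x_0,\mu_0)=0$ force $x_0=d_1$ and $\mu_0=\mu_1$ algebraically. You instead reduce to the single endpoint $\mu=\mu_1$ via the monotonicity $\partial_\mu f(x,\mu)=H(x)-x<0$ for $x>0$, and then analyse $f(\cdot,\mu_1)$ directly by verifying its two double zeros and counting the critical points of $\partial_x f(\cdot,\mu_1)$. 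Both approaches work; the paper's threshold argument is somewhat more economical because the tangency system delivers $d_1$ and $\mu_1$ in one stroke, whereas your route trades a very clean reduction step for a more laborious critical-point count at $\mu=\mu_1$ that leans on the full sign pattern of $H''$ and $H'''$ in \eqref{eq:hypothesis_H}.
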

	\begin{proof}
		
		We notice,  thanks to assumption \eqref{eq:hypothesis_H}, that $f(0,\mu)=0$, $\partial_x f(0,\mu)=0$ and $\partial^2_{xx} f(x,\mu)=\mu H''(x)+1>0$, for all $x>0$ and small values of $\mu$. This implies that, for all $x>0$ and for small values of $\mu$, $f(x,\mu)>0$. We define $\mu_0=\inf \{\mu>0\,|\, \exists x>0, f(x,\mu)=0\}$ which is necessarily positive in view of the previous arguments.  The following system of  equations   must then be satisfied at a point $x_0>0$
		\[
		\begin{cases}
			\displaystyle\mu_0 H(x_0) =x_0\left(\mu_0-\frac{x_0}{2}\right),\\
			\mu_0 H'(x_0)=\mu_0 -x_0.
		\end{cases}
		\]
		From the second equation we deduce that $2\mu_0 - x_0 = \mu_0(1+H'(x_0))$, and substituting this in the first equation provides $x_0(1+H'(x_0))=2H(x_0)$, {implying thanks to}~\eqref{eq:d1} that $x_0=d_1$. Since from the second equation we can deduce also that $x_0=\mu_0(1-H'(x_0))$ we deduce that
		\[
		\mu_0 =\frac{d_1}{1-H'(d_1)}=\frac{2H(d_1)}{(1-H'(d_1))(1+H'(d_1))}=\mu_1.
		\]
		It follows that $f(x,\mu)\geq 0$ for all  $\mu \leq \mu_1$ and $x\geq 0$. Moreover, the equality holds only for $x=0$ or $x=d_1$ and $\mu=\mu_1$.
		Translating this to our variables $(t, \z(t))$, and using the fact that $x(t)=\mu-\z(t)\geq 0$, we deduce that, in the range $\mu\in[0,\mu_1]$, $\varphi(t,\mu)\geq 0$, and hence $F(t,\mu)\geq 0$, and the only moment when $\varphi(t,\mu)=0$, and hence $F(t,\mu)=0$, is when $\mu=\mu_1$ and $\z(t)=\mu_1-d_1$, or when $\z(t)=\mu$.
	\end{proof}

In what follows we will focus on the case where
\[
\z(0)\in\left(-\frac{1}{\sqrt{g}},\mu\right].
\]
Since the limit case $\z(0)=\mu$ can be regarded as a  special, simpler case, we will deal with it first.

\subsection{The case $\z(0)=\mu\leq \mu_1$}\label{sec:z_0=mu}

Suppose for now $\mu<\mu_1$ and assume that $\z(0)=\mu$ and $u_0(z)<0$ for all $z\neq \mu$. Notice that Assumption \eqref{eq:hypothesis_rho} implies that $\tau<2\sqrt{g}$ and that $\rho_0=1-\f{\tau^2}{4g}>0$. 

We also notice that   $F(0,z)=g\big(- z^2+\mu^2+2\mu H(z-\mu)\big)=-2gf(\mu-z,\mu)$. Thanks to Lemma \ref{lem:F(t,mu)_greater_0} we deduce that $F(0, z)<0$, for all $z\neq \mu$.  Moreover, we have
\[
F(0,\mu)=0,\quad \partial_z F(0,\mu)=0,\quad \partial_{zz}^2F(0,\mu)=-2g.
\]

We deduce then from the previous analysis, i.e. Lemmas \ref{lem:T_m>0}  and and \ref{lem:valrho}, that $T_m>0$ and that equation~\eqref{eq:z_is_Lipschitz} is satisfied. Therefore, $\z(t)=\mu$ for all $t\in[0,T_m)$ and
\[
F(t,z)= F(0,z)\quad\text{for all }t\in[0,T_m).
\]
We next apply Theorem \ref{thm:section_Dyn_Prog_1} to obtain that no new zeroes of $u$ appear far away from the point $(T_m, \mu)$ and that $\partial_{zz}^2u(T_m, z)<0$ in a neighbourhood of the point $(T_m,\mu)$. By Lemma~\ref{lem:T_m=infty} we conclude that $T_m=\infty$.

On the other hand, if we suppose that $\mu=\mu_1$ the same argument holds true except for the fact that this time the fitness function $F$ attains two zeroes, with  the second one located at $\mu_1-d_1$, with $d_1$ given by~\eqref{eq:d1}. However, the fact that $u_0(\mu_1-d_1)<0$ ensures that $u(t,\mu-d_1)<0$ for all $t\geq 0$ again by~\eqref{eq:dyn_prog}. This is enough to conclude the results of Theorem~\ref{thm:limit_epsilon_non_concave} in the case $\z(0)=\mu\leq \mu_1$.

\subsection{The case $\z(0)<\mu\leq \mu_1$}\label{sect:z_0<mu}

Once again, our goal is to study the problem up to time $T_m$ assuming that $T_m<\infty$, and to conclude thanks to a contradiction on the definition of $T_m$ that, in fact, either $\rho(T_m)=0$ or $T_m=\infty$. In what follows we hence  assume that $T_m<\infty$. Note that, by equation~\eqref{eq:z_is_Lipschitz}, we can assure that $\z(t)<\mu$ for all $t\leq T_m$. In view of Lemma \ref{lem:T_m=infty}, in order to prove the result, it is enough to show that no new zero point of $u(T_m,\cdot)$ appears far away $\z(T_m)$ and that $\partial_{zz}^2u(T_m,\cdot)$ is strictly negative around $\z(T_m)$. In order to do so we follow the ideas presented in Section~\ref{sec:Dyn_Prog_2}. However, we cannot apply the results of Section~\ref{sec:Dyn_Prog_2} directly since this time the fitness function changes over time and all the conditions on $F$ might not be satisfied in a fixed convex set $[0,T]\times\Omega_0$. In order to circumvent this difficulty we need to study the fitness function and the evolution of the system in more detail.

\begin{lem}\label{lem:existence_curve_gamma}
	Let conditions of Theorem~\ref{thm:limit_epsilon_non_concave} hold and that $T_m<\infty$. Given $(t,z)\in [0,T_m]\times \R$, there exists a curve $\gamma_z(s)\in W^{1,2}([0,t],\R)$ such that $z=\gamma_z(t)$ and
	\[
	u(t,z)=u_0(\gamma_z(0))+\int_0^t\Big(-\frac{|\dot{\gamma_z}(s)|^2}4+F(s,\gamma_z(s))\Big)ds.
	\]
\end{lem}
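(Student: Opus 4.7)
The plan is to follow very closely the argument of Step 1 in the proof of Theorem~\ref{thm:section_Dyn_Prog_1} (Section~\ref{sec:Dyn_Prog_1}), which itself mirrors the classical direct method of the calculus of variations. The starting point is the Dynamic Programming Principle provided by Theorem~\ref{thm:limit_epsilon}, which already yields
\[
u(t,z) = \sup\Big\{f_t(\gamma)\ :\ \gamma\in W^{1,2}([0,t];\R),\ \gamma(t)=z\Big\},
\]
so the task reduces to producing a maximizer. A crucial preliminary remark is that for $t\in [0,T_m]$ we are in the monomorphic regime, hence $n(s,\cdot)=\rho(s)\delta_{\bar z(s)}$ and $F(s,z)=-g(z^2-\bar z(s)^2)+\tau H(z-\bar z(s))$; combined with the continuity of $\bar z$ on $[0,T_m]$ (whose limit at $T_m$ has been shown to exist via~\eqref{eq:z_is_Lipschitz}), this makes $F$ jointly continuous on $[0,T_m]\times\R$, and bounded on $[0,T_m]\times K$ for any compact $K\subset\R$.

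First, pick a maximizing sequence $(\gamma_n)\subset W^{1,2}([0,t];\R)$ with $\gamma_n(t)=z$ and $f_t(\gamma_n)\to u(t,z)$. Using the upper bounds $u_0(y)\le A_2-\overline B_2 y^2$ from~\eqref{eq:hypothesis_u_0} and $F(s,y)\le 1+\tau$ from Lemma~\ref{lem:rho_phi_weak_convergence}, together with the well-definedness of $u(t,z)$ (which is locally bounded by Proposition~\ref{prp:regularity}\,(i)), we obtain a constant $C=C(t,z)$ such that
\[
\int_0^t |\dot\gamma_n(s)|^2\,ds \le C.
\]
Combined with $\gamma_n(t)=z$ and the Cauchy--Schwarz estimate~\eqref{eq:bound_size_trajectory}, this forces $\|\gamma_n\|_{W^{1,2}([0,t])}\le C$. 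Hence, up to extraction, $\gamma_n\rightharpoonup \bar\gamma$ weakly in $W^{1,2}([0,t];\R)$ and strongly in $C([0,t];\R)$, with $\bar\gamma(t)=z$.

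Next I would pass to the limit in each of the three pieces of $f_t(\gamma_n)$. The term $u_0(\gamma_n(0))\to u_0(\bar\gamma(0))$ by continuity of $u_0$. The kinetic term is handled by weak lower semi-continuity of the $L^2$ norm:
\[
\int_0^t \frac{|\dot{\bar\gamma}(s)|^2}{4}\,ds \le \liminf_{n\to\infty}\int_0^t\frac{|\dot\gamma_n(s)|^2}{4}\,ds.
\]
For the potential term, since the $\gamma_n$ are uniformly bounded in $C([0,t])$, they take values in a common compact set $K\subset\R$; on $[0,T_m]\times K$ the function $F$ is uniformly continuous, so the uniform convergence $\gamma_n\to\bar\gamma$ gives $\int_0^t F(s,\gamma_n(s))\,ds \to \int_0^t F(s,\bar\gamma(s))\,ds$. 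Assembling the three limits yields $u(t,z)\le f_t(\bar\gamma)$, and since the reverse inequality is automatic from the DPP, $\bar\gamma$ is the desired optimizer.

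The only genuine subtlety, and the step I would be most careful about, is justifying the joint continuity of $F$ all the way up to $t=T_m$: this is where one uses that on $[0,T_m)$ the dynamics is monomorphic with $\bar z$ of class $C^1$, and that $\bar z(T_m)=\lim_{t\nearrow T_m}\bar z(t)$ exists thanks to the Lipschitz estimate~\eqref{eq:z_is_Lipschitz}. Once this continuity (and hence uniform continuity on compact sets) is granted, the passage to the limit in the potential term is standard and the remainder of the argument is the classical direct method.
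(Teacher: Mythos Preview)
Your proposal is correct and matches the paper's approach: the paper simply refers back to Step~1 of Section~\ref{sec:Dyn_Prog_1}, noting that one only needs $F$ and $u_0$ bounded from above and $u$ well defined and locally bounded. Your extra care about the joint continuity of $F$ up to $t=T_m$ is fine but not strictly needed for the passage to the limit in the potential term---continuity of $F(s,\cdot)$ in $z$ for each $s$ together with the uniform bound $F\le 1+\tau$ on the common compact set $K$ already gives $\int_0^t F(s,\gamma_n(s))\,ds\to\int_0^t F(s,\bar\gamma(s))\,ds$ by dominated convergence.
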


The proof is similar to the one of Step 1 in Section~\ref{sec:Dyn_Prog_1}, using only that $F$ and $u_0$ are bounded from above and that $u$ is well defined and locally bounded. Note however that this trajectory may not be unique.

\begin{lem}\label{lem:prop_F}
Let conditions of Theorem~\ref{thm:limit_epsilon_non_concave} hold and $\mu< \mu_1$, $\bar{z}(0)\in\left(-1/\sqrt{g},\mu\right)$, and that $T_m<\infty$. Then
%there exists a unique $\hat z(t)$ such that
\begin{itemize}
%\item[(i)] $\bar{z}(t)<\mu<\hat z(t)$ for all $0<t<T_m$,
\item[(i)] for all $t\in[0,T_m],\ z\in   (\z(t),+\infty)$, $\partial_{zz}^2F(t,z)<-2g$,
$F(t,\bar{z}(t))=0<F(t,\mu)$ and %\\[8pt]
 $\partial_z F(t,\bar{z}(t))>0$.% for all $0<t<T_m$,
\item[(ii)]  For all $0<t\leq T_m,\ z\geq\mu$, $\partial_z F(t,z)< 0$ , and the value
\[
r(t):=\{r >\z(t)\  :\ \max\limits_{z\in (\z(t), \infty)} F(z, t)= F(r,t)\}
\]
is unique and satisfies $\bar{z}(t)<r(t)<\mu$.
\end{itemize}
\end{lem}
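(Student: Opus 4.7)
The plan is to derive everything from the explicit formula for $F$ available on $[0,T_m]$, namely
\[
F(t,z)=-g\bigl(z^2-\bar z(t)^2\bigr)+\tau H\bigl(z-\bar z(t)\bigr),
\]
so the proof is essentially a calculation on top of Lemma~\ref{lem:F(t,mu)_greater_0} and the ODE for $\bar z$. I would first record the two derivatives
\[
\partial_z F(t,z)=-2gz+\tau H'\bigl(z-\bar z(t)\bigr),\qquad \partial_{zz}^2F(t,z)=-2g+\tau H''\bigl(z-\bar z(t)\bigr),
\]
and observe that by \eqref{eq:z_is_Lipschitz} the trajectory satisfies $\bar z(t)<\mu$ for all $t\in[0,T_m]$, with $\mu=\tau/(2g)$ by definition.

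For part (i), the concavity bound $\partial_{zz}^2F(t,z)<-2g$ on $(\bar z(t),+\infty)$ is immediate from \eqref{eq:hypothesis_H}, which gives $H''(w)<0$ for all $w>0$, together with $\tau>0$. The identity $F(t,\bar z(t))=0$ follows from $H(0)=0$. The strict inequality $F(t,\mu)>0$ is a direct application of Lemma~\ref{lem:F(t,mu)_greater_0}(ii) in the regime $\mu<\mu_1$, since the only equality cases there correspond to $\bar z(t)=\mu$ (excluded here) or $\mu=\mu_1$ (excluded by hypothesis). Finally, the derivative at $\bar z(t)$ computes as $\partial_z F(t,\bar z(t))=-2g\bar z(t)+\tau H'(0)=2g(\mu-\bar z(t))>0$, using $H'(0)=1$.

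For part (ii), I would first show $\partial_z F(t,z)<0$ for all $z\geq\mu$. Since $H''(w)<0$ for $w>0$ and $H'(0)=1$, the function $H'$ is strictly less than $1$ on $(0,+\infty)$; because $z-\bar z(t)\geq \mu-\bar z(t)>0$, this gives $\tau H'(z-\bar z(t))<\tau=2g\mu\leq 2gz$, hence $\partial_z F(t,z)<0$. Uniqueness and localization of $r(t)$ then follow at once from part (i): $F(t,\cdot)$ is strictly concave on $(\bar z(t),+\infty)$, so $\partial_z F(t,\cdot)$ is strictly decreasing there; combined with $\partial_z F(t,\bar z(t))>0$ and $\partial_z F(t,\mu)<0$, the intermediate value theorem produces a unique critical point $r(t)\in(\bar z(t),\mu)$, and strict concavity guarantees this is the unique (hence global) maximum of $F(t,\cdot)$ on $(\bar z(t),+\infty)$.

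There is no genuine obstacle here, since both parts are direct consequences of differentiating the closed-form expression of $F$ and invoking the already proved facts $\bar z(t)<\mu$ and Lemma~\ref{lem:F(t,mu)_greater_0}(ii). The only thing I would be careful about is writing the strict inequality $H'(w)<1$ for $w>0$ cleanly from $H'(0)=1$ and $H''<0$ on $(0,+\infty)$, and verifying that every use of $z-\bar z(t)>0$ is justified by the strict inequality $\bar z(t)<\mu\leq z$ that holds throughout $[0,T_m]$.
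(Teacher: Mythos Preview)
Your proposal is correct and follows essentially the same approach as the paper: both proofs differentiate the explicit monomorphic formula for $F$, invoke Lemma~\ref{lem:F(t,mu)_greater_0} for $F(t,\mu)>0$, and use $H'(0)=1$, $H''>0$ on $(-\infty,0)$... wait, $H''<0$ on $(0,\infty)$, together with $\bar z(t)<\mu$ to get the sign of $\partial_z F$ and the strict concavity bound. The only cosmetic difference is that the paper works with the rescaled function $\varphi=F/(2g)$ and first checks $\partial_z\varphi(t,\mu)<0$ before extending to $z\geq\mu$ by concavity, whereas you argue $\partial_z F(t,z)<0$ directly for all $z\geq\mu$ via $\tau H'(z-\bar z(t))<\tau=2g\mu\leq 2gz$; both routes are immediate.
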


\begin{proof}

We recall from Lemma~\ref{lem:F(t,mu)_greater_0} that $F(t,\mu)>0$ except when $\z(t)=\mu$ or $\mu=\mu_1$ and $\z(t)=\mu_1-d_1$. Recalling that $\mu>\z(t)$ for all $t\in [0,T_m]$, we deduce that $F(t,\bar{z}(t))=0<F(t,\mu)$ for all $t\in [0,T_m]$. The rest of the claims in \textit{(i)} and \textit{(ii)} come after a brief analysis of $F$. 

Recall the definition of $\varphi$ in~\eqref{eq:varphi}. One can next verify following simple computations that $\partial_{zz}^2\varphi(t,z)=-1+\mu H''(z-\overline z(t))< -1$ for all $z> \z(t)$ and
\[
\partial_{z}\varphi(t, \z(t)) = \mu-\z(t)>0,\qquad \text{for all $t\in [0,T_m)$ and $\mu\in [0,\mu_1)$}.
\]
This provides \textit{(i)}.

To prove \textit{(ii)} we compute
\[
\partial_{z}\varphi(t, \mu) = \mu \left(H'(\mu-\z(t))-1\right)<0
\]
since $H'(x)<1$ for all $x\neq 0$, and since $\mu-\z(t)>0$. The concavity of $\varphi$ for all $z>\z(t)$ provides $\partial_z \varphi(t,z)<0$ for all $t>0, z\geq\mu$. Since $\partial_z \varphi(t,\z(t))>0>\partial_z \varphi(t,\mu)$, again from the concavity of $\varphi$ we deduce the properties of $r(t)$.

\end{proof}

{The} next result is fundamental in the study of the behaviour of the solution in the monomorphic regime $\mu\leq \mu_1$, since it proves that $F(t,z)>0$ only if $z>\z(t)$, implying, as we will prove shortly after, that no new zeroes of $u$ can appear to the left of $\z(t)$.

\begin{lem}\label{lem:F_negative}
Let conditions of Theorem~\ref{thm:limit_epsilon_non_concave} hold and that $T_m<\infty$. Let $\mu\leq \mu_1$ and  $\bar{z}(0)\in\left(-1/\sqrt{g},\mu\right)$.  Then $F(t,z)<F(t,\bar{z}(t))=0$ for all $t\in[0, T_m\SMM{]}$, $z<\bar{z}(t)$.
\end{lem}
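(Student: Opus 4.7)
The plan is to reduce the lemma to an algebraic inequality that is essentially already contained in Lemma~\ref{lem:F(t,mu)_greater_0}. Fix $t\in[0,T_m]$ and $z<\bar z(t)$, and introduce the change of variable $y=\bar z(t)-z>0$. Using the expression
\[
F(t,z)=-g(z^2-\bar z(t)^2)+\tau H(z-\bar z(t))
\]
together with the oddness of $H$ and the identity $\tau=2g\mu$, expand $z^2-\bar z(t)^2=-y(2\bar z(t)-y)$ to obtain, after a straightforward rearrangement,
\[
F(t,z)=2g\bigl[\bar z(t)y-\tfrac{y^2}{2}-\mu H(y)\bigr]=-2g\bigl[\mu H(y)-y(\bar z(t)-\tfrac{y}{2})\bigr].
\]

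The next step is to recognize the connection with the function $f(x,\mu)=\mu H(x)-x(\mu-\tfrac{x}{2})$ from Lemma~\ref{lem:F(t,mu)_greater_0}. A trivial algebraic manipulation gives
\[
\mu H(y)-y\bigl(\bar z(t)-\tfrac{y}{2}\bigr)=f(y,\mu)+y\bigl(\mu-\bar z(t)\bigr).
\]
By part (i) of Lemma~\ref{lem:F(t,mu)_greater_0}, $f(y,\mu)\geq 0$ whenever $\mu\leq \mu_1$ and $y\geq 0$. Thus it suffices to show that the second term $y(\mu-\bar z(t))$ is strictly positive for all $t\in[0,T_m]$ and $y>0$, since this upgrades the lower bound on $f+y(\mu-\bar z(t))$ to a strict one. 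This will yield $F(t,z)<0=F(t,\bar z(t))$, which is the claim.

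The main obstacle is therefore to show $\bar z(t)<\mu$ strictly on the whole closed interval $[0,T_m]$. For $t<T_m$ this is immediate from the ODE $\bar z'(t)=\tfrac{2g}{|\partial_{zz}^2u(t,\bar z(t))|}(\mu-\bar z(t))$ in~\eqref{eq:z_is_Lipschitz} together with $\bar z(0)<\mu$: indeed the right-hand side keeps $\mu-\bar z(t)$ of constant sign. To rule out $\bar z(T_m)=\mu$ when $T_m<\infty$, one uses the locally uniform upper bound on $\partial_{zz}^2u$ (equivalently a uniform lower bound $|\partial_{zz}^2u|\geq C>0$ available on $[0,T_m]$ near the maximum from Proposition~\ref{prp:regularity}(iii) and the strict concavity obtained in Lemma~\ref{lem:T_m>0}) to bound the right-hand side of the ODE by $\tfrac{2g}{C}(\mu-\bar z(t))$, so that a Grönwall-type estimate gives $\mu-\bar z(t)\geq (\mu-\bar z(0))e^{-2gt/C}>0$ for all finite $t$. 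Combining this strict inequality with $f(y,\mu)\geq 0$ and $y>0$ produces strict positivity of $\mu H(y)-y(\bar z(t)-y/2)$, hence $F(t,z)<0$, completing the proof.
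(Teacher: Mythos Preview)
Your proof is correct and substantially simpler than the paper's. The paper proceeds by contradiction: assuming $\varphi(s',z')\geq 0$ for some $z'<s'$, it locates a first point $(s_1,z_1)$ with $\varphi=\partial_z\varphi=0$, then through a sequence of algebraic manipulations involving $d_1$ and the monotonicity of $2H(x)/(1-H'(x)^2)$ derives $\mu>\mu_1$. Your direct identity $F(t,z)=-2g\bigl[f(y,\mu)+y(\mu-\bar z(t))\bigr]$ bypasses all of this machinery: Lemma~\ref{lem:F(t,mu)_greater_0}(i) already gives $f(y,\mu)\geq 0$, and the strictly positive correction $y(\mu-\bar z(t))$ immediately upgrades the inequality to a strict one. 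What the paper's approach buys is self-containment (it rederives the threshold $\mu_1$ from scratch), whereas your approach exploits that this work has already been packaged into Lemma~\ref{lem:F(t,mu)_greater_0}.

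One minor remark on your justification that $\bar z(T_m)<\mu$: the citations are slightly off. Proposition~\ref{prp:regularity}(iii) gives semi-convexity ($\partial_{zz}^2 u\geq -S_c$), i.e.\ an \emph{upper} bound on $|\partial_{zz}^2 u|$, which yields the opposite Gr\"onwall inequality; the uniform \emph{lower} bound on $|\partial_{zz}^2 u|$ along $\bar z$ is only established later in Lemma~\ref{lem:regularity_in_the_good_set}. However, the paper itself asserts $\bar z(t)<\mu$ for all $t\leq T_m$ directly from the ODE~\eqref{eq:z_is_Lipschitz} (stated at the opening of Section~\ref{sect:z_0<mu}), so you can simply invoke that and drop the Gr\"onwall detour.
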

\begin{proof}
Since the function $\z$ is continuous, then $F$, which only depends on time via $\z$, is also continuous in time and space. Moreover, since by equation~\eqref{eq:z_is_Lipschitz} $\z$ is  increasing towards $\mu$ and $\z(t)\in [-1/\sqrt{g},\mu)$ for all $t\in[0,T_m]$ we can express this dependence of $F$ on $\z$ as a dependence on a parameter $s=\z(t)$ with $s\in[-1/\sqrt{g},\mu)$ and $t=\z^{-1}(s)$, where the exponent $-1$ represents the inverse function. Let us consider then
\[
\varphi(s,z):=\frac{F(\z^{-1}(s), z)}{2g}=-\frac{z^2-s^2}{2} + \mu H(z-s).
\]
We notice that, since $\varphi(-1/\sqrt{g},-1/\sqrt{g})=0$ and $\partial_z\varphi(-1/\sqrt{g},z)>0$ for every $z<-1/\sqrt{g}$, we obtain that
\begin{equation}\label{eq:varphi_s_z}
\varphi(-1/\sqrt{g},z)<0 \text{ for all }z<-1/\sqrt{g}.
\end{equation}
Notice also that $\partial_z\varphi(s,s)=\mu-s>0$ for all $s<\mu$.

Let us argue now by contradiction in order to prove the lemma. We assume that at some $s'\in[-1/\sqrt{g},\mu)$ and $z'<s'$ we have $\varphi(s',z')\geq 0$. Increasing continuously $s$ from $-1/\sqrt{g}$ up to $s'$ we deduce, from~\eqref{eq:varphi_s_z} and the regularity of $\varphi$, that there must exist a previous $s_1\in (-1/\sqrt{g},s']$ and a point $z_1\leq s_1$ such that $\varphi(s_1, z_1)=0$ and $\partial_z \varphi(s_1,z_1)=0$. It could happen that $z_1=s_1$, but since $\partial_z\varphi(s_1,z_1)=\partial_z\varphi(s_1,s_1)=\mu-s_1>0$, we deduce that $z_1<s_1$.
In other words, the system
\[
\begin{cases}
-\frac{z_1^2-s_1^2}{2}+\mu H(z_1-s_1)=0,\\
z_1=\mu H'(z_1-s_1)=\mu H'(s_1-z_1)
\end{cases}
\]
must be satisfied.

We define $d:= s_1-z_1$. Then by the second equation of the previous system, since $s_1<\mu$, we have
\[
d=s_1-z_1<\mu (1-H'(d))\quad\text{and}\quad z_1+s_1<\mu (1+H'(d)).
\]
Since $s_1>z_1$, this implies that
\[
-\frac{z_1^2-s_1^2}{2} = \frac{(s_1-z_1)(s_1+z_1)}{2}<\mu^2\frac{(1-H'(d))(1+H'(d))}{2}.
\]
Substituting in the first equation of the system, rearranging and using that $H(-d)=-H(d)$ we obtain
\begin{equation}\label{eq:mu_greater_d}
\mu>\frac{2H(d)}{(1+H'(d))\cdot(1-H'(d))}.
\end{equation}

On the other hand, since $z_1+s_1<\mu (1+H'(d))$, substituting once again in the first equation we find
\[
\mu\cdot\big(d(1+H'(d))-2H(d) \big)>0.
\]
Following the analysis from~\cite[Lemma 4.2]{Garriz-Leculier-Mirrahimi}, this implies that $d>d_1$.
Let us now check, thanks to hypothesis~\eqref{eq:hypothesis_H}, that
\begin{equation}\label{eq:monotone_increasing}
\frac{2H(x)}{(1+H'(x))\cdot(1-H'(x))}\quad\text{is monotone increasing for all }x\geq d_1.
\end{equation}
We compute
\[
\partial_x\left[\frac{2H(x)}{(1+H'(x))\cdot(1-H'(x))}\right]=\underbrace{\frac{2H'(x)}{\big(1-(H'(x))^2)^2}}_{>0}\cdot\phi(x),
\]
where we defined
\[
\phi(x):=1-(H'(x))^2+2H(x)H''(x).
\]
We have to check that $\phi(x)>0$ for all $x>d_1$.

Since $d_1$ is the unique positive root of $x(1+H'(x))-2H(x)$, with some analysis and hypothesis~\eqref{eq:hypothesis_H} one can show that $d_1>z_H$ (see \cite{Garriz-Leculier-Mirrahimi}-Lemma 4.2) and that
\[
x(1+H'(x))-2H(x)<0\text{ for all }0<x<d_1\quad\text{and}\quad x(1+H'(x))-2H(x)>0\text{ for all }x>d_1,
\]
implying
\[
\big[\partial_x\left[x(1+H'(x))-2H(x)\right]\big]_{x=d_1}=1-H'(d_1)+d_1H''(d_1)\geq 0,
\]
which in turn implies
\[
H''(d_1)\geq -\frac{1-H'(d_1)}{d_1}.
\]
Therefore,
\[
\phi(d_1)\geq (1-H'(d_1))\cdot \left(1+H'(d_1)-\frac{2H(d_1)}{d_1}\right)=0
\]
since, again, $d_1$ is a root of $x(1+H'(x))-2H(x)$.

Finally, we compute $\phi'(x)=2H(x)H'''(x)>0$ for all $x>z_H$, and in particular for all $x\geq d_1$ since $d_1>z_H$. Combining  $\phi(d_1)\geq 0$ and $\phi'(x)>0$, for all $x\geq d_1$, implies that $\phi(x)>0$ for all $x>d_1$, which in turn yields~\eqref{eq:monotone_increasing}.

Properties~\eqref{eq:monotone_increasing} and~\eqref{eq:mu_greater_d} lead to
\[
\mu>\frac{2H(d_1)}{(1+H'(d_1))\cdot(1-H'(d_1))}=\frac{d_1}{1-H'(d_1)}=\mu_1,
\]
which is in contradiction with our hypothesis $\mu\leq \mu_1$.

\end{proof}

\begin{lem}\label{lem:delta_sigma_lambda}
	Let conditions of Theorem~\ref{thm:limit_epsilon_non_concave} hold and that $\mu\leq\mu_1$, $\bar{z}(0)\in\left(-1/\sqrt{g},\mu\right)$, $T_m<\infty$.  Then there exist positive constants $\delta, \sigma, \lambda$ such that
	\[
	\begin{aligned}
		&z_0\leq \z(t)\leq \mu-\delta\quad\text{ for all }\quad t\in[0,T_m],\\
		&F(t,z)<-\lambda\qquad\quad\text{ for all }\quad t\in[0,T_m],\ z\in (-\infty,\z(t)-\sigma)\quad \text{and}\\
		&\partial_z F(t,z)>0\qquad\quad\text{ for all }\quad t\in[0,T_m],\ z\in [\z(t)-\sigma, \z(t)].
	\end{aligned}
	\]
\end{lem}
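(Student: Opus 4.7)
\textbf{Proof proposal for Lemma~\ref{lem:delta_sigma_lambda}.}

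The plan is to extract the three constants in sequence, each feeding into the next.

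\emph{Obtaining $\delta$.} The lower bound $\z(t) \geq z_0$ is immediate, since $\bar z'(t) > 0$ by~\eqref{eq:z_is_Lipschitz} (as $\mu - \bar z(t) > 0$ for $t < T_m$). For the upper bound, I would invoke the uniform strict concavity of $u(t,\cdot)$ at $\z(t)$, which is inherited from the Hamilton--Jacobi analysis developed in Section~\ref{sect:general_considerations} (in the spirit of Step~4 of Section~\ref{sec:Dyn_Prog_1}): on the compact interval $[0,T_m]$ this yields a constant $c_0 > 0$ such that $|\partial_{zz}^2 u(t,\z(t))| \geq 2 c_0$. Plugging this into~\eqref{eq:z_is_Lipschitz} gives $\bar z'(t) \leq (g/c_0)\,(\mu - \bar z(t))$, so Gronwall produces
\[
\mu - \z(t) \geq (\mu - z_0)\, e^{-gt/c_0} \geq (\mu - z_0)\, e^{-g T_m / c_0} =: \delta > 0, \qquad t \in [0,T_m].
\]

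\emph{Obtaining $\sigma$.} Differentiating $F(t,z) = -g(z^2 - \z(t)^2) + \tau H(z - \z(t))$ and using $H'(0) = 1$ together with $\tau = 2g\mu$ gives
\[
\partial_z F(t,\z(t)) = -2g\z(t) + \tau = 2g(\mu - \z(t)) \geq 2g\delta > 0.
\]
Since $\partial_z F(t,z) = -2gz + \tau H'(z - \z(t))$ depends on $t$ only through $\z(t)$, which ranges in the compact interval $[z_0, \mu - \delta]$, a standard uniform continuity argument provides $\sigma > 0$ such that $\partial_z F(t,z) \geq g\delta$ for all $t \in [0,T_m]$ and $z \in [\z(t) - \sigma, \z(t)]$.

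\emph{Obtaining $\lambda$.} I would split the region $z < \z(t) - \sigma$ into a far-field piece and a bounded piece. For $z \leq -M$, the quadratic term dominates: $F(t,z) \leq -gM^2 + g(\mu - \delta)^2 + \tau$, which can be made less than $-1$ by taking $M$ large enough, uniformly in $t$. On the bounded complement $\{(\z,z) : \z \in [z_0, \mu - \delta],\ z \in [-M, \z - \sigma]\}$, a compact subset of $\R^2$, the continuous function $(\z,z) \mapsto -g(z^2 - \z^2) + \tau H(z - \z)$ is strictly negative by Lemma~\ref{lem:F_negative} (since $z \leq \z - \sigma < \z$) and therefore attains a strictly negative maximum $-\lambda_1 < 0$. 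Taking $\lambda := \tfrac12 \min(\lambda_1, 1)$ yields the strict inequality claimed.

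\emph{Main obstacle.} The delicate ingredient is the uniform lower bound $|\partial_{zz}^2 u(t,\z(t))| \geq 2 c_0$ on the whole interval $[0,T_m]$. This must be established via the representation formula of Section~\ref{sec:Dyn_Prog}, but adapted to the situation where the local concavity zone of the fitness function moves with $\z(t)$. The compactness of $[0,T_m]$ together with the construction in the proof of Lemma~\ref{lem:T_m>0} should suffice, provided one checks carefully that the concavity constant does not degenerate as $t \to T_m$; everything else in the lemma is then a rather soft continuity/compactness argument built on top.
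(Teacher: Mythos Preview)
Your derivations of $\sigma$ and $\lambda$ are essentially the paper's: the paper is terser on $\lambda$ (it just defines $-\lambda_1:=\sup_{t\in[0,T_m],\,z\le\bar z(t)-\sigma} F(t,z)$ and invokes Lemma~\ref{lem:F_negative}), while you split into a far-field and a compact piece, but both are fine. The real difference---and the real problem---is in your derivation of $\delta$.

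You want a uniform strict-concavity bound $|\partial_{zz}^2 u(t,\bar z(t))|\ge 2c_0$ on all of $[0,T_m]$ in order to run Gronwall. But at this point in the paper no such uniform bound is available: the concavity obtained in Section~\ref{sect:general_considerations} (Lemma~\ref{lem:T_m>0}) is only for small times, and the global bound on $[0,T_m]$ is precisely what is established \emph{downstream} in Lemma~\ref{lem:regularity_in_the_good_set}. That lemma relies on the rectangle construction, which in turn is built from $\sigma^*=\min(\sigma,\tilde\sigma)$ and hence from the $\sigma$ produced by the present lemma. So the dependency you propose is circular. You correctly flag this as the ``main obstacle'', but compactness of $[0,T_m]$ together with Lemma~\ref{lem:T_m>0} does not resolve it: the small-time concavity of Lemma~\ref{lem:T_m>0} does not a~priori propagate to $T_m$ without the rectangle machinery, and that machinery needs $\sigma$ (hence $\delta$) first.

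The paper sidesteps this entirely. Since $\bar z$ is monotone increasing on $[0,T_m]$ and $\bar z(T_m)<\mu$ (asserted just before the lemma from~\eqref{eq:z_is_Lipschitz}), one simply \emph{defines} $\delta:=\mu-\bar z(T_m)$. This uses nothing about $\partial_{zz}^2 u$ and breaks the circularity. With $\delta$ in hand, $\partial_z F(t,\bar z(t))=2g(\mu-\bar z(t))\ge 2g\delta$, and since $\partial_{zz}^2 F$ is bounded one gets $\sigma$; then $\lambda$ follows from Lemma~\ref{lem:F_negative} exactly as you do. In short: replace your Gronwall step by the one-line definition $\delta:=\mu-\bar z(T_m)$ and the rest of your argument goes through.
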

\begin{proof}
	Since $\z(t)$ is monotone increasing, we begin by taking $\delta:=\mu-\z(T_m)$. Next, we notice that the monotonicity of $\z(t)$ implies
	\[
	2g\delta=2g(\mu-\z(T_m))\leq \partial_{z}F(t,\z(t))\leq 2g(\mu-\z(0))
	\]
	and thus, since $\partial_{zz}^2F$ is bounded, there must exist a positive constant $\sigma$ such that
	\[
	\partial_{z}F(t,z)>0\text{ for all }t\in[0,T_m], \ z\in [\z(t)-\sigma,\z(t)].
	\]
	Define now $\lambda_1$ by
	\[
	-\lambda_1=\sup\limits_{t\in[0,T_m],\ z\leq \z(t)-\sigma} F(t,z).
	\]
	From Lemma~\ref{lem:F_negative} we find that $-\lambda_1<0$. Choosing $\lambda<\lambda_1$ the claim is satisfied.
\end{proof}

This lemma and the fact that $\partial_{zz}^2 F(t,z)<-2g$ for all $z\geq \z(t)$, thanks to Lemma \ref{lem:prop_F}, allow us to conclude the existence of bounded intervals
\[
\mathcal{U}(t):=(\omega_1(t), \mu]
\]
satisfying
\[
	\begin{aligned}
 	[\z(t),\mu]\subset \mathcal{U}(t)\quad&\text{for all }t\in[0,T_m],\\
 	\partial_{zz}F(t,z)<0\quad&\text{for all }t\in[0,T_m],\ z\in\mathcal{U}(t).
% 	\quad\text{and}\\
% 	& F(t,z)>F(t,y) \quad\text{ for all }t\in[0,T_m],\ z\in \Omega(t), y\in \Omega(t)^c.
	\end{aligned}
\]
  Note however that for now, for each $t$ the set $\mathcal{U}(t)$ is not unique. On the other hand, we expect to have that {$\z(t)\to \mu$ as $t\to\infty$.}

In addition, there exists a constant  $\tilde\sigma>0$ such that
\[
-2g - \tau\sup\limits_{z\in\R} H''(z)\leq \partial_{zz}^2F(t,z) =-2g+\tau H''(z-\z(t))<-g\quad\text{for all }t>0,\ z>\z(t)-\tilde\sigma,
\]
implying that $\mathcal{U}(t)$ can be chosen for each $t$ in a way such that
\[
-K_2\leq \partial_{zz}F(t,z)\leq -K_1<0\quad \text{ for all }\quad t\in[0,T_m],\ z\in \mathcal{U}(t),
\]
for a couple of positive values $K_1, K_2$ independent of $\z(t)$. Therefore, the concavity of $F$ in the set
\begin{equation}\label{eq:def_U}
\mathcal{U}:=\{(t,z)\in\R^2\ :\ t\in[0, T_m],\ z\in\mathcal{U}(t)\}
\end{equation}
is uniform. Let us then define, with $\sigma$ coming from Lemma~\ref{lem:delta_sigma_lambda},
\[
\sigma^*:=\min(\sigma, \tilde\sigma)
\]
and define
\[
\omega_1(t):= \z(t)-\f{3\sigma^\ast}{4}\text{ for all }t\in[0, T_m].
\]
We then notice that 
\begin{equation}\label{eq:conditions_U}
	\begin{aligned}
		[\z(t),\mu]\subset \mathcal{U}(t)\quad&\text{ for all }\quad t\in[0,T_m],\\
		\partial_{z}F(t,\mu)<0<\partial_{z}F(t,\omega_1(t))\quad &\text{ for all }\quad t\in[0,T_m],\\
		|\partial_{zzz}^3F(t,z)|\leq K\quad &\text{ for all }\quad t\in[0,T_m],\ z\in \mathcal{U}(t)\\
		-K_2\leq \partial_{zz}F(t,z)\leq -K_1<0\quad &\text{ for all }\quad t\in[0,T_m],\ z\in \mathcal{U}(t)\quad\text{and}\\
		 F(t,z)>F(t,y) \quad&\text{ for all }\quad t\in[0,T_m],\ z\in \mathcal{U}(t), y<\omega_1(t).
	\end{aligned}
\end{equation}
for three positive values $K, K_1, K_2$ independent of $\z(t)$ or $t$. These properties play a similar role to the hypothesis~\eqref{hyp1}--\eqref{asRD3} that are satisfied in $\Omega_0$ instead of $\mathcal{U}(t)$. These sets also have the advantage of satisfying $\mathcal{U}(t_2)\subset \mathcal{U}(t_1)$ for all $t_1<t_2$.

However, all the conditions of  hypothesis~\eqref{hyp2} are not satisfied, since by Lemma~\ref{lem:prop_F} there are points $z$ to the right of $\mu$ such that $F(t,z)>F(t,\z(t))=0$. This hypothesis~\eqref{hyp2} is used in the following points of Section~\ref{sec:Dyn_Prog}: in Step 2 and Step 6 of Section~\ref{sec:Dyn_Prog_1} and in the proof inside Section~\ref{sec:Dyn_Prog_2} that we omitted for the sake of brevity. In what follows we will show how to deal with these issues in order to prove similar results as in Section~\ref{sec:Dyn_Prog}.

In the proof of Section~\ref{sec:Dyn_Prog_2}, the hypothesis~\eqref{hyp2} is used to prove that $\z(t)\not\in\partial\Omega_0$, see~\cite{Mirrahimi-Roquejoffre - 2016} for more details. Here, the analogous property (i.e., $\z(t)\not\in\partial\mathcal{U}(t)$) holds thanks to the inequality $\omega_1(t)<\z(t)<\mu$, which is true by the construction of $\omega_1(t)$ and equation~\eqref{eq:z_is_Lipschitz}.

In Step 6 of Section~\ref{sec:Dyn_Prog_1} the hypothesis is used to prove that the maximum value $u=0$ is always attained inside $\Omega_0$ (again, in this case $\Omega_0$ should be changed by $\mathcal{U}(t)$). By the definition of $T_m$, we already know that, for all $t\in [0,T_m)$ the maximum value of $u$ is only attained at the point $\z(t)\in \mathcal{U}(t)$.  It remains to prove that the value $u(T_m,\cdot)=0$ is never attained outside $\mathcal{U}(T_m)$. The following Lemmata~\ref{lem:zeroes_1} and~\ref{lem:zeroes_2} will provide this.

But before that, we recall that the value of the viscosity solution $u$ of the Hamilton-Jacobi equation~\eqref{eq:v_ham_jac} at point $(t,x)$ with $t\leq T_m$ is given  by formula~\eqref{eq:dyn_prog}. Note also that, given $(t,x)\in \R^+\times\R$, by Lemma~\ref{lem:existence_curve_gamma} there exists an optimal curve $\gamma_x$  satisfying
\[
u(t,x)=f_t(\gamma_x),
\]
and, given the value of $u(t_0, \gamma_x(t_0))$, then we can also work with
$$
u(t,x) = \sup_{(\gamma(s),s)\in\R^d\times[t_0,t], \gamma(t)=x, \gamma(t_0)=\gamma_x(t_0)}\Big\{f_t(\gamma),\text{ where }\gamma\in W^{1,2}([t_0,t] : \R^d)\Big\},
$$
with
$$
f_t(\gamma)=u(t_0,\gamma_x(t_0))+\int_{t_0}^t\Big(-\frac{|\dot\gamma(s)|^2}4+F(s, \gamma(s))\Big)ds.
$$

\begin{lem}\label{lem:zeroes_1}
 Let conditions of Theorem~\ref{thm:limit_epsilon_non_concave} hold that and $\mu\leq\mu_1$, $\bar{z}(0)\in\left(-1/\sqrt{g},\mu\right)$, $T_m<\infty$. For every $z<\omega_1(T_m)$ we have that $u(T_m, z)<0$.
\end{lem}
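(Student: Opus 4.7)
The approach is to represent $u(T_m,z)$ via the Dynamic Programming Principle along an optimal trajectory and to extract the strict sign from Lemma~\ref{lem:F_negative}, which asserts that $F(s,\cdot)<0$ strictly on $(-\infty,\bar z(s))$ for every $s\in[0,T_m]$. Fix $z<\omega_1(T_m)$; in particular $z<\bar z(T_m)$. By Lemma~\ref{lem:existence_curve_gamma}, select an optimal trajectory $\gamma_z\in W^{1,2}([0,T_m];\R)$ with $\gamma_z(T_m)=z$ satisfying
\[u(T_m,z)=u_0(\gamma_z(0))+\int_0^{T_m}\Big(-\tfrac{|\dot\gamma_z(s)|^2}{4}+F(s,\gamma_z(s))\Big)\,ds,\]
and define $t_0:=\sup\{s\in[0,T_m]:\gamma_z(s)\geq\bar z(s)\}$ whenever this set is non-empty.

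If the set is non-empty, continuity of $\gamma_z-\bar z$ together with $\gamma_z(T_m)<\bar z(T_m)$ forces $t_0\in[0,T_m)$, $\gamma_z(t_0)=\bar z(t_0)$, and $\gamma_z(s)<\bar z(s)$ for every $s\in(t_0,T_m]$. Restarting the Dynamic Programming Principle at $t_0$ and using the identity $u(t_0,\bar z(t_0))=0$—valid on $[0,T_m)$ thanks to the monomorphic regime built into the definition of $T_m$—yields
\[u(T_m,z)=\int_{t_0}^{T_m}\Big(-\tfrac{|\dot\gamma_z(s)|^2}{4}+F(s,\gamma_z(s))\Big)\,ds.\]
Lemma~\ref{lem:F_negative} then provides $F(s,\gamma_z(s))<0$ strictly for every $s\in(t_0,T_m]$, and the continuity of $s\mapsto F(s,\gamma_z(s))$ combined with the positive length of $(t_0,T_m]$ makes the integral strictly negative, giving $u(T_m,z)<0$. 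If instead the above set is empty, then $\gamma_z(s)<\bar z(s)$ for every $s\in[0,T_m]$; in particular $\gamma_z(0)<\bar z(0)=z_0$, so hypothesis~\eqref{eq:hypothesis_u_0} forces $u_0(\gamma_z(0))<0$, and the inequality $F(s,\gamma_z(s))\leq 0$ along the whole trajectory yields $u(T_m,z)\leq u_0(\gamma_z(0))<0$ directly.

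The main obstacle is the first case: an optimal trajectory may have sat above $\bar z(s)$ at earlier times, where Lemma~\ref{lem:prop_F} shows $F$ can be positive, and these positive contributions could a priori offset the kinetic penalty and push $u(T_m,z)$ up to zero. This scenario is ruled out by restarting the Dynamic Programming Principle precisely at the last crossing time $t_0$, which absorbs every contribution before $t_0$ into the known quantity $u(t_0,\bar z(t_0))=0$, together with the cornerstone Lemma~\ref{lem:F_negative}—whose validity crucially relies on the monomorphic threshold $\mu\leq\mu_1$ and forbids any positive value of $F$ strictly to the left of $\bar z(s)$.
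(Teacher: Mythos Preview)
Your proof is correct and follows the same underlying idea as the paper's: restart the Dynamic Programming Principle at an intermediate time $t_0<T_m$, use the monomorphic constraint on $[0,T_m)$ to control $u(t_0,\gamma_z(t_0))$, and then invoke Lemma~\ref{lem:F_negative} to make the remaining time integral strictly negative.

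The only difference is in the choice of $t_0$. The paper simply takes $t_0$ slightly below $T_m$ by continuity, so that $\gamma_x(s)<\omega_1(s)$ on $[t_0,T_m]$; since $\gamma_x(t_0)<\omega_1(t_0)<\bar z(t_0)$ and $\bar z(t_0)$ is the unique zero of $u(t_0,\cdot)$, one gets $u(t_0,\gamma_x(t_0))<0$ directly, and no case analysis is needed. You instead take $t_0$ to be the \emph{last} crossing time of $\gamma_z$ with $\bar z$, which forces the case split (crossing occurs / never occurs) but gives the clean identity $u(t_0,\gamma_z(t_0))=0$. Both choices work; the paper's is shorter, while yours makes more explicit how potential positive contributions of $F$ to the right of $\bar z$ at earlier times are absorbed into the known value $u(t_0,\bar z(t_0))=0$.
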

\begin{proof} 
Let us argue by a contradiction argument using the representation formula. Let us suppose that $(T_m, x)$ is a point where $u(T_m, x)=0$, with $x<\omega_1(T_m)$, and let $\gamma_{x}$ be any $W^{1,2}$ curve (which may not be unique) such that
\[
u(T_m, x)=u_0(\gamma_{x}(0))+\int_0^{T_m}\Big(-\frac{|\dot\gamma_{x}(s)|^2}4+F(s, \gamma_{x}(s))\Big)ds.
\]

Since $u$ and $\gamma_{x}$ are continuous, it means that there must exist a time $t_0$ such that $\gamma_{x} (t)<\omega_1(t)$  for all $t\in [t_0,T_m] $ and $u(t_0,\gamma_{x}(t_0))<0$. Moreover, since $F(t,\bar{z}(t))=0$, by the definition of $\mathcal{U}(t)$, we obtain that for all $s\in [t_0, t_1]$,
\[
F(s,\gamma_{x}(s))<0.
\]
Then, using the representation formula  
\[
u(T_m, x)=u(t_0,\gamma_{x}(t_0))+\int_{t_0}^{T_m}\Big(-\frac{|\dot\gamma_{x_1}(s)|^2}4+F(s,\gamma_{x}(s))\Big)ds,
\]
we obtain that $u(T_m, x)<0$, a contradiction.
\end{proof}

\begin{lem} \label{lem:zeroes_2}
 Let conditions of Theorem~\ref{thm:limit_epsilon_non_concave} hold and that $\mu\leq\mu_1$, $\bar{z}(0)\in\left(-1/\sqrt{g},\mu\right)$, $T_m<\infty$. For every $z_1>\mu$ and $t\in [0,T_m]$, we have that $u(t, z_1)<u(t, \mu)$.
\end{lem}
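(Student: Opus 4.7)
The plan is to exploit the Dynamic Programming Principle from Theorem~\ref{thm:limit_epsilon}: starting from a trajectory realizing $u(t,z_1)$, I will construct a competing trajectory ending at $\mu$ whose value under the functional $f_t$ is strictly larger, which yields $u(t,\mu)>u(t,z_1)$ at once. The case $t=0$ is trivial since the representation reduces to $u(0,\cdot)=u_0(\cdot)$, and strict concavity of $u_0$ together with $z_0\leq \mu<z_1$ gives $u_0(z_1)<u_0(\mu)$. So I focus on $t\in(0,T_m]$.

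By Lemma~\ref{lem:existence_curve_gamma}, I would fix $\gamma_{z_1}\in W^{1,2}([0,t],\R)$ with $\gamma_{z_1}(t)=z_1$ and $f_t(\gamma_{z_1})=u(t,z_1)$. The natural competitor is the truncation
\[
\tilde\gamma(s):=\min\{\gamma_{z_1}(s),\mu\},\qquad s\in[0,t],
\]
which belongs to $W^{1,2}$ (truncation by a constant preserves Sobolev regularity), satisfies $\tilde\gamma(t)=\mu$ since $z_1>\mu$, and obeys $|\dot{\tilde\gamma}|\leq|\dot\gamma_{z_1}|$ almost everywhere. By continuity of $\gamma_{z_1}$ and the fact that $\gamma_{z_1}(t)=z_1>\mu$, the set $A:=\{s\in[0,t]:\gamma_{z_1}(s)>\mu\}$ contains a left neighbourhood of $t$ and has positive Lebesgue measure.

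The central computation is
\[
f_t(\tilde\gamma)-f_t(\gamma_{z_1}) = \big(u_0(\tilde\gamma(0))-u_0(\gamma_{z_1}(0))\big) + \int_0^t\frac{|\dot\gamma_{z_1}|^2-|\dot{\tilde\gamma}|^2}{4}\,ds + \int_0^t \big(F(s,\tilde\gamma(s))-F(s,\gamma_{z_1}(s))\big)\,ds,
\]
and I expect each summand to be nonnegative, with the third strictly positive. The first term vanishes when $\gamma_{z_1}(0)\leq \mu$, and when $\gamma_{z_1}(0)>\mu$ the strict concavity of $u_0$ with its unique maximum at $z_0\leq \mu$ forces $u_0(\mu)>u_0(\gamma_{z_1}(0))$. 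The kinetic integral is nonnegative by construction. For the fitness integral, $A^c$ contributes zero; on $A$ the integrand equals $F(s,\mu)-F(s,\gamma_{z_1}(s))$ with $\gamma_{z_1}(s)>\mu$, and Lemma~\ref{lem:prop_F}(ii), which asserts $\partial_z F(s,\cdot)<0$ on $[\mu,+\infty)$ for $s>0$, makes this strictly positive on $A$. Since $|A|>0$, the fitness piece is strictly positive, giving $f_t(\tilde\gamma)>f_t(\gamma_{z_1})=u(t,z_1)$. Applying the representation formula at $(t,\mu)$ then yields $u(t,\mu)\geq f_t(\tilde\gamma)>u(t,z_1)$.

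The only delicate point is the strict inequality in the fitness integral, for which Lemma~\ref{lem:prop_F}(ii) together with the continuity of $\gamma_{z_1}$ at $s=t$ are precisely what is needed. No quantitative estimate on $F$ or on $|\dot\gamma_{z_1}|$ enters, because both the kinetic and the initial-value contributions point in the same direction as the fitness one.
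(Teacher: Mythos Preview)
Your proof is correct and follows essentially the same approach as the paper: both exploit the representation formula and the fact that $F(s,\cdot)$ is strictly decreasing on $[\mu,\infty)$ (Lemma~\ref{lem:prop_F}(ii)) together with the monotonicity of $u_0$ to the right of $z_0$, building a competing trajectory ending at $\mu$ with strictly larger value. The paper splits into two cases (trajectories that stay to the right of $\mu$ versus those that cross it) and modifies only the final excursion above $\mu$, whereas your single truncation $\tilde\gamma=\min(\gamma_{z_1},\mu)$ handles both cases at once and is a slightly cleaner packaging of the same idea.
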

\begin{proof}
Let  $z_1>\mu$ and $t\in [0,T_m]$. We divide the set of all possibles curves $\gamma_{z_1}$ such that $\gamma_{z_1}(t)=z_1$   into two subsets: those who are always to the right of the straight line $\bar{\gamma_\mu}:=[0,t]\times\{z=\mu\}$ and those who cross it.

Since straight lines are local maximizers of the functional
\[
\int_0^t-\frac{|\dot{\overline\gamma}(s)|^2}4ds,
\]
from the definition of $f$, the fact that $u_0(z)$ is monotone decreasing for all $z>\z(0)$ and the fact that $F(t,z)$ is monotone decreasing for all $t\in[0, T_m),\ z>\mu$, it is clear that for every curve $\gamma_{z_1}$ that do not cross the line $\bar{\gamma_\mu}$ we have that
$$
f_t(\gamma_{z_1})<f_t(\bar{\gamma_\mu})\leq u(t,\mu).
$$

We look now to the curves $\gamma_{z_1}$ that do cross this straight line. Let
$$
t_\mu:=\sup\{t\in(0,t) : \gamma_{z_1}(t)=\mu\}
$$
and define $\widetilde \gamma_\mu$ as
$$
\begin{cases}
\widetilde \gamma_\mu(s)=\gamma_x(s)& \text{for $s\in [0,t_\mu]$},\\
\widetilde \gamma_\mu(s)=\mu& \text{for $s\in (t_\mu,t]$}.
\end{cases}
$$
This curve $\widetilde \gamma_\mu$ is an acceptable curve for the functional $f$, so again, since $F(t,z)$ is monotone decreasing for all $t\in[0, T_m),\ z>\mu$, we conclude that
$$
f_t(\gamma_{z_1})<f_t(\widetilde \gamma_\mu)\leq u(t,\mu),
$$
which proves the claim.
\end{proof}
We deduce that the only possible   zeroes of $u$ have to be in $\mathcal{U}(T_m)$.

Finally, in Step 2 of Section~\ref{sec:Dyn_Prog_1} the hypothesis~\eqref{hyp2} is used to prove that optimal trajectories $\gamma(\cdot)$ with ending points in $\Omega_0$ have always been inside $\Omega_0$. Since hypothesis~\eqref{hyp2} is satisfied to the left of $\omega_1(t)$, it is enough to prove the following lemma.

\begin{lem}\label{lem:trayectories_less_mu}
	 Let conditions of Theorem~\ref{thm:limit_epsilon_non_concave} hold, $t_0\in(0,T_m]$, $z_1\in\mathcal{U}(t_0)$ and $\gamma_{z_1}(\cdot)$ be any curve  such that $\gamma_{z_1}(t_0)=z_1$ and $u(t_0, z_1)=f_t(\gamma_{z_1})$. Then $\gamma_{z_1}(t)\leq\mu$ for all $t\in[0,t_0]$.
\end{lem}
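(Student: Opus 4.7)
The plan is to argue by contradiction: assume there exists $t^\ast\in [0,t_0)$ with $\gamma_{z_1}(t^\ast)>\mu$ and construct a competing trajectory $\widetilde\gamma$ that stays in $(-\infty,\mu]$ and achieves a strictly larger value of $f_{t_0}$, contradicting optimality. The main ingredients are Lemma~\ref{lem:prop_F}(ii), which ensures $\partial_z F(s,z)<0$ for $z\geq \mu$ and every $s\in (0,T_m]$ (hence $F(s,\mu)>F(s,z)$ for all $z>\mu$), together with assumption~\eqref{eq:hypothesis_u_0} and the standing hypothesis $z_0\leq \mu$, which imply that $u_0$ is strictly decreasing on $[z_0,\infty)$ and in particular $u_0(\mu)\geq u_0(\zeta)$ for every $\zeta\geq \mu$.

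Since $\gamma_{z_1}(t_0)=z_1\leq \mu$, by continuity the set $\mathcal{A}=\{s\in[0,t_0]\,:\,\gamma_{z_1}(s)>\mu\}$ is an open subset of $[0,t_0)$; set $t_\beta=\sup\mathcal{A}$, so that $\gamma_{z_1}(t_\beta)=\mu$. Split into two cases according to whether $\gamma_{z_1}$ ever reaches $\mu$ before $t_\beta$. If it does, define $t_\alpha=\sup\{s\in[0,t_\beta)\,:\,\gamma_{z_1}(s)\leq \mu\}$, so that $\gamma_{z_1}(t_\alpha)=\gamma_{z_1}(t_\beta)=\mu$ and $\gamma_{z_1}>\mu$ on $(t_\alpha,t_\beta)$, and set
\[
\widetilde\gamma(s)=\gamma_{z_1}(s)\ \text{for}\ s\in[0,t_\alpha]\cup[t_\beta,t_0],\qquad \widetilde\gamma(s)=\mu\ \text{for}\ s\in(t_\alpha,t_\beta).
\]
Otherwise $\gamma_{z_1}>\mu$ on $[0,t_\beta)$ and we set $\widetilde\gamma(s)=\mu$ on $[0,t_\beta]$ and $\widetilde\gamma(s)=\gamma_{z_1}(s)$ on $(t_\beta,t_0]$. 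In both cases $\widetilde\gamma\in W^{1,2}([0,t_0]:\R)$, it satisfies $\widetilde\gamma(t_0)=z_1$, and it stays in $(-\infty,\mu]$.

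To conclude, compare $f_{t_0}(\widetilde\gamma)$ with $f_{t_0}(\gamma_{z_1})$. On each subinterval where $\widetilde\gamma\equiv \mu$, the kinetic cost vanishes while $\int|\dot\gamma_{z_1}|^2/4\,ds\geq 0$; on the same subinterval $F(s,\widetilde\gamma(s))=F(s,\mu)>F(s,\gamma_{z_1}(s))$ by Lemma~\ref{lem:prop_F}(ii), and the strict inequality is integrated on a set of positive measure (since $\gamma_{z_1}>\mu$ there). In the second case we additionally use $u_0(\widetilde\gamma(0))=u_0(\mu)\geq u_0(\gamma_{z_1}(0))$. Summing the contributions yields $f_{t_0}(\widetilde\gamma)>f_{t_0}(\gamma_{z_1})=u(t_0,z_1)$, contradicting the definition of $u(t_0,z_1)$ as the supremum of $f_{t_0}$. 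The obstacle worth checking carefully is that Lemma~\ref{lem:prop_F}(ii) only ensures the strict monotonicity of $F$ in $z\geq \mu$ for $t>0$, but the endpoint $t=0$ contributes measure zero to the integral, so this causes no issue.
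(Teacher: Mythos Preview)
Your argument is correct and essentially identical to the paper's: both replace an excursion of the optimal trajectory above $\mu$ by the constant segment $\mu$ and use that $F(s,\cdot)$ is strictly decreasing on $[\mu,\infty)$ together with the strict decrease of $u_0$ on $[z_0,\infty)$ to obtain a strictly better competitor. One minor inaccuracy: in your first case the modified curve $\widetilde\gamma$ need not stay in $(-\infty,\mu]$ on $[0,t_\alpha]$ (there could be earlier excursions above $\mu$), but this claim is not used in the contradiction, so the proof goes through unchanged.
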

\begin{proof}
	Suppose that there exist $t'\leq t_0$ such that $\gamma_{z_1}(t)>\mu$. Then due to the continuity of $\gamma_{z_1}$ there must exist $t_1< t_2$ such that $[t_1, t_2]\subset[0, t_0]$ and $\gamma_{z_1}(t)>\mu$ for all $t\in(t_1, t_2)$, with $\gamma_{z_1}(t_2)=\mu$, and $\gamma_{z_1}(t_1)=\mu$ or $t_1=0$. Then we can construct a curve $\tilde\gamma_{z_1}(t)$ defined as
	\[
	\tilde\gamma_{z_1}(t):=\begin{cases}
		\gamma_{z_1}(t),  &\text{ for }t\in[0, t_1)\cup(t_2, t_0],\\
		\mu, &\text{ for }t\in[t_1, t_2],
	\end{cases}
	\]
	and, since $F(t,z)$ is decreasing for all $z\geq \mu$ and $u_0(z)$ decreases for all $z>\z(0)$, then it is clear that $f_t(\tilde\gamma_{z_1})>f_t(\gamma_x)$, contradicting the choice of $\gamma_{z_1}$.
\end{proof}

Therefore we have  results that make up for the lack of hypothesis~\eqref{hyp2} in the Step 2 and Step 6 of Section~\ref{sec:Dyn_Prog_1} and in Step 1 of Section~\ref{sec:Dyn_Prog_2}.

We are almost ready to adapt the ideas from Section~\ref{sec:Dyn_Prog} to study $u$ in the set $\mathcal{U}$ defined in~\eqref{eq:def_U}.  However, we cannot directly use the set $\mathcal{U}$ as the concavity set since $\mathcal{U}$ is not of the form $[0,T]\times \Omega_0$. Such a set is convex and thus every straight line joining two points inside it will be contained in $[0,T]\times \Omega_0$. These straight paths are used in Step 2 of Section~\ref{sec:Dyn_Prog_1}.
To overcome this difficulty we will split $[0,T_m]$ into the union of a finite number of time intervals. We consider a finite sequence of times, to be chosen later, $\{t_i\}_{i=1}^k$, such that $0= t_k<\dots<t_1=T_m$, and we define
\[
\mathcal{R}_i=[t_{i+1}, t_{i}]\times \overline{\mathcal{U}(t_{i})}.
\] 
Since $\omega_1(t)$ is monotone increasing and smooth we have that 
$$
\mathcal{U}(t_{i+1})\subset \mathcal{U}(t_{i}),
$$
and clearly, for any possible sequence $\{t_i\}_{i=1}^k$, we have that
\[
\bigcup\limits_{i=1}^{k-1}\mathcal{R}_i\subset \mathcal{U}.
\]
A possible sketch of such construction can be found in Figure~\ref{fig:rectangles}.
\begin{figure}[H]
	\centering
	\includegraphics[scale = 0.53]{Construction_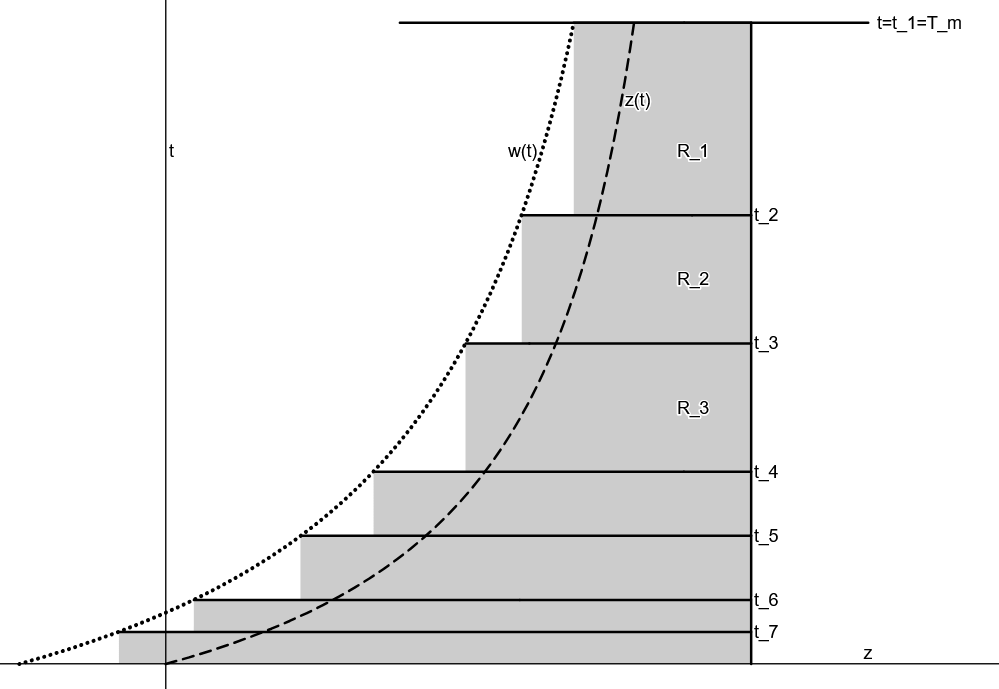}
	\caption{A collection of rectangles $\{\mathcal{R}_i\}_{i=1}^7$, in grey, that contain the curve $\z(t)$, in dashed-black. Each horizontal black line is a set $\mathcal{U}(t_i)$, for $i=1,...,7$. In dotted-black, the curve $\omega_1(t)$ and the vertical line corresponds to $\{ z=\mu \}$.}
	\label{fig:rectangles}
\end{figure}

However, the image of the curves $\omega_1(t)$ and $\z(t)$ may be very close to each other (if, for example, $z'(t)$ approaches 0), so it has to be justified why we can choose such a finite sequence of rectangles. First, we prove that the distance between the images of the curves $\omega_1(t)$ and $\z(t)$, understood as geometrical objects in the space $\mathbb{R}^2$ endorsed with the euclidean distance, is finite and strictly positive.

\begin{lem}\label{lem:distance_curves}
	Let $T_m<\infty$. There exists a positive constant $\kappa>0$ such that
	\[
	\kappa:=\inf\limits_{t,s \in [0, T_m]} \text{dist}\big((t,\z(t)), (s,\omega_1(s))\big) = \inf\limits_{t,s \in [0, T_m]} \sqrt{(t-s)^2+(\z(t)-\omega_1(s))^2}>0.
	\]
\end{lem}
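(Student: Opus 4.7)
The plan is to use a compactness and continuity argument together with the explicit form $\omega_1(t) = \bar{z}(t) - \tfrac{3\sigma^*}{4}$ obtained just before Lemma~\ref{lem:distance_curves}. Since $T_m<\infty$, the square $[0,T_m]\times[0,T_m]$ is compact, so if we can show that the function
\[
d(t,s) := \sqrt{(t-s)^2+(\bar{z}(t)-\omega_1(s))^2}
\]
is continuous on this square and strictly positive everywhere on it, then $\kappa=\min d>0$ by the extreme value theorem. Continuity is immediate from the regularity of $\bar z$ established in~\eqref{eq:z_is_Lipschitz} (indeed, $\bar z\in C^{0,1}([0,T_m])$, and $\omega_1$ inherits the same regularity from $\bar z$).

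Next I would verify strict positivity pointwise. Suppose $d(t,s)=0$ for some $(t,s)\in[0,T_m]^2$. Then $(t-s)^2=0$ and $(\bar{z}(t)-\omega_1(s))^2=0$ simultaneously, so $t=s$ and $\bar{z}(t)=\omega_1(t)=\bar{z}(t)-\tfrac{3\sigma^*}{4}$, which forces $\tfrac{3\sigma^*}{4}=0$, contradicting the strict positivity of $\sigma^*=\min(\sigma,\tilde{\sigma})$ guaranteed by Lemma~\ref{lem:delta_sigma_lambda} and the construction preceding~\eqref{eq:def_U}. Hence $d>0$ on the compact set $[0,T_m]^2$, and its continuous minimum is therefore strictly positive, giving $\kappa>0$.

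No major obstacle is expected here: the crux of the argument is just that $\omega_1$ was explicitly defined as a vertical translate of $\bar z$ by a fixed positive amount $\tfrac{3\sigma^*}{4}$, so the two graphs cannot get arbitrarily close. If one wanted a fully quantitative bound, one could in addition use the Lipschitz estimate on $\bar z$ from~\eqref{eq:z_is_Lipschitz}: letting $L$ denote its Lipschitz constant on $[0,T_m]$, one would split into the cases $|t-s|\leq \tfrac{3\sigma^*}{8L}$ (so that $|\bar{z}(t)-\bar{z}(s)|\leq \tfrac{3\sigma^*}{8}$ and thus $|\bar{z}(t)-\omega_1(s)|\geq \tfrac{3\sigma^*}{8}$) and $|t-s|>\tfrac{3\sigma^*}{8L}$ (where the horizontal component alone gives the lower bound), yielding $\kappa\geq \min(\tfrac{3\sigma^*}{8},\tfrac{3\sigma^*}{8L})$. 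This quantitative refinement is optional for the statement, but it is useful for the next step of the proof where the rectangles $\mathcal{R}_i$ need to be chosen with uniform size.
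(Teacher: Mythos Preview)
Your proof is correct and follows essentially the same approach as the paper: both rely on compactness of $[0,T_m]^2$, continuity of the distance function, and the explicit relation $\omega_1(t)=\bar z(t)-\tfrac{3\sigma^*}{4}$ to rule out a zero. The paper phrases this as a sequential contradiction argument (if $\kappa=0$, extract a convergent sequence and derive $\omega_1(t_0)=\bar z(t_0)$), while you invoke the extreme value theorem directly; these are equivalent. Your optional quantitative bound $\kappa\geq \min\bigl(\tfrac{3\sigma^*}{8},\tfrac{3\sigma^*}{8L}\bigr)$ is not in the paper but is a useful addition, since the subsequent construction of the rectangles $\mathcal R_i$ does indeed use $\kappa$ as a uniform step size.
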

\begin{proof}
	Since $\omega_1(t)<\z(t)<\mu$ and both curves are monotone increasing it is clear that both curves are inside the compact set $[0,T_m]\times[\omega_1(0), \mu]$ and thus $\kappa<\infty$. In order to prove that $\kappa>0$ we argue by contradiction. So suppose that $\kappa=0$. Then there must exists a point $(t_0, \z(t_0))$ and a sequence $\{(t_n, \omega_1(t_n))\}$ such that
	\[
	\sqrt{(t_0-t_n)^2+(\z(t_0)-\omega_1(t_n))^2}\to 0\text{ as }n\to\infty.
	\]
	This implies that $t_n\to t_0$ and $\omega_1(t_n)\to \z(t_0)$. However, since $\omega_1(t)$ is a continuous curve, this implies that $\omega_1(t_n)\to\omega_1(t_0)$ and therefore $\omega_1(t_0)=\z(t_0)$. But this is a contradiction with the definition of $\omega_1(t)$, since by construction $\z(t)-\omega_1(t)>\frac{\sigma^*}{2}$ for all $t\in[0,T_m]$.
\end{proof}

\begin{lem}
\label{lem:rec}
	Let $T_m<\infty$. There exists a finite collection of rectangles $\{\mathcal{R}_i\}_{i=1}^n$ satisfying the following properties:  there exists a decreasing sequence $(t_i)_{i=1}^{n+1}$, with $t_1=T_m$ and $t_{n+1}=0$, such that
	\[
	\mathcal{R}_i=[t_{i+1}, t_i]\times \overline{\mathcal{U}(t_i)}, 
	\]
	\[
	(t,\z(t))\in \cup_{i=1}^n\mathcal  {R}_i^\circ, \quad \text{for all $t\in[0,T_m]$}.
	\] 
\end{lem}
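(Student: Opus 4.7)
The plan is to use the uniform continuity of $\bar z$ on the compact interval $[0,T_m]$ to subdivide the time axis into finitely many pieces whose length forces $\bar z(t)$ to stay strictly above $\omega_1(t_i)$ throughout $[t_{i+1},t_i]$. Recall that $\omega_1(t)=\bar z(t)-\tfrac{3\sigma^*}{4}$, so bounding $\bar z(t_i)-\bar z(t)$ by, say, $\sigma^*/2$ on each subinterval $[t_{i+1},t_i]$ immediately yields $\bar z(t)>\omega_1(t_i)+\sigma^*/4$, which is the required strict membership in $\mathcal U(t_i)^\circ=(\omega_1(t_i),\mu)$ (recalling that $\bar z(t)\leq\mu-\delta<\mu$ for all $t\in[0,T_m]$ by Lemma \ref{lem:delta_sigma_lambda}).

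Concretely, by equation~\eqref{eq:z_is_Lipschitz}, $\bar z\in C^{0,1}([0,T_m))$, and in fact $|\bar z'(t)|\leq 2g(\mu-\bar z(0))/C$ where $C>0$ is the lower bound on $|\partial^2_{zz}u(t,\bar z(t))|$ established in Section~\ref{sect:general_considerations}. Since $T_m<\infty$, $\bar z$ extends by limit to a Lipschitz function on $[0,T_m]$ with some constant $L>0$. Choose $\delta:=\sigma^*/(2L)$, set $n:=\lceil T_m/\delta\rceil$, and pick a strictly decreasing mesh $t_1=T_m>t_2>\cdots>t_{n+1}=0$ with $t_i-t_{i+1}\leq\delta$. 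For any $t\in[t_{i+1},t_i]$ we then get
\[
0\leq \bar z(t_i)-\bar z(t)\leq L(t_i-t_{i+1})\leq L\delta=\sigma^*/2,
\]
so $\bar z(t)\geq\bar z(t_i)-\sigma^*/2=\omega_1(t_i)+\sigma^*/4>\omega_1(t_i)$, confirming $(t,\bar z(t))\in[t_{i+1},t_i]\times(\omega_1(t_i),\mu)$, which is strictly interior in the $z$-direction of $\mathcal R_i$.

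The remaining point to address, which I expect to be the only real subtlety, is that the junction nodes $t=t_i$ for $2\leq i\leq n$ lie on the common time-boundary of $\mathcal R_{i-1}$ and $\mathcal R_i$ and hence are not in the topological interior of either. This is remedied by shrinking $\delta$ (say to $\sigma^*/(4L)$) and then replacing each $\mathcal R_i$ by the slightly enlarged $\tilde{\mathcal R}_i:=[t_{i+1}-\delta/2,t_i+\delta/2]\times\overline{\mathcal U(t_i)}$ (truncating at the endpoints $0$ and $T_m$). The same Lipschitz estimate above, now applied on the enlarged time interval, still gives $\bar z(t_i)-\bar z(t)\leq L\cdot(3\delta/2)\leq 3\sigma^*/8<3\sigma^*/4$, so the spatial interior condition persists; the overlap between consecutive enlarged rectangles then ensures every point $(t,\bar z(t))$ with $t\in[0,T_m]$ lies in at least one full topological interior $\tilde{\mathcal R}_i^\circ$. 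Relabelling the mesh accordingly, this produces the required finite family. Using Lemma~\ref{lem:distance_curves}, which guarantees $\kappa>0$, is not strictly needed here but reassures us that the whole construction remains non-degenerate; the essential input is the Lipschitz continuity of $\bar z$ together with the compactness of $[0,T_m]$.
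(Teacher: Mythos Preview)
Your main argument is correct and is actually more direct than the paper's. The paper proves Lemma~\ref{lem:distance_curves} first, obtaining a lower bound $\kappa>0$ on the Euclidean distance in $\R^2$ between the graphs of $\z$ and $\omega_1$, and then steps down from $T_m$ in increments of $\kappa/2$. You bypass this by exploiting the explicit relation $\omega_1(t)=\z(t)-\tfrac{3\sigma^*}{4}$ together with the Lipschitz bound on $\z$ from~\eqref{eq:z_is_Lipschitz}; since the vertical gap $\tfrac{3\sigma^*}{4}$ is fixed, controlling the oscillation of $\z$ on each subinterval by $\sigma^*/2$ immediately gives $\z(t)>\omega_1(t_i)$. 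This is cleaner and makes Lemma~\ref{lem:distance_curves} unnecessary for the construction (as you note yourself). The paper's route, by contrast, would still work if the gap $\z-\omega_1$ were allowed to vary, so it is slightly more robust, but in the present setting your shortcut is perfectly adequate.

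Your concern about the junction times $t_i$ lying on the time-boundary of two adjacent rectangles is legitimate if one reads $\mathcal R_i^\circ$ as the full topological interior in $\R^2$; in that reading the conclusion fails also at $t=0$ and $t=T_m$, regardless of the construction. The paper's own proof does not address this point either --- it only verifies that $(t,\z(t))$ lies in the closed rectangle with $\z(t)$ strictly between $\omega_1(t_i)$ and $\mu$. What is actually used downstream (Lemma~\ref{lem:regularity_in_the_good_set}) is only the spatial interior condition $\z(t)\in(\omega_1(t_i),\mu)$ for $t\in[t_{i+1},t_i]$, so this is a harmless imprecision in the statement. Your overlapping-rectangle fix is unnecessary for the intended application and, as you implicitly recognise, cannot be reconciled with the exact partition form $[t_{i+1},t_i]\times\overline{\mathcal U(t_i)}$ required by the lemma; it would be cleaner simply to note that the statement should be read with the spatial interior.
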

\begin{proof}
	We begin by taking $t_1=T_m$ and focus on the point $(t_1,\omega_1(t_1))$. Since the two curves that we are studying are continuous and monotone increasing, for every value $\omega_1(s)\in[\z(0), \omega_1(t_1)]$ there   exists a time $t_{\omega_1(s)}<s$ such that $\z(t_{\omega_1(s)})=\omega_1(s)$. On the other hand, by Lemma~\ref{lem:distance_curves} the distance between the point $(t_1,\omega_1(t_1))$ and the point $(t_{\omega_1(t_1)},\z(t_{\omega_1(t_1)}))$ is no less than $\kappa$. In particular, $(t,\z(t))\not\in B_\frac{\kappa}{2}\big((t_1,\omega_1(t_1))\big)$ for all $t\in[0,T_m]$, with $B_r(x)$ the ball with radius $r$ and centre $x$.

	Since $\z(t_1)>\omega_1(t_1)$ and $\z(t)$ is continuous, we have
	\[
	(t,\z(t))\in\left[t_1-\frac{\kappa}{2}, t_1\right]\times\mathcal{U}(t_1)\quad\text{for all}\quad t\in\left[t_1-\frac{\kappa}{2}, t_1\right],
	\]
	so by choosing $t_2=t_1-\frac{\kappa}{2}$ and $\mathcal{R}_1=[t_{2}, t_1]\times \overline{\mathcal{U}(t_i)}$ we have a first rectangle. Now we consider $\mathcal{U}(t_2)$, focus on the point $(t_2,\omega_1(t_2))$ and repeat the process, finding a $t_3=t_2-\frac{\kappa}{2}$ and a second rectangle. Note that the size of each interval is $\kappa/2$, strictly positive.
	
	We repeat this process until reaching a point $t_n> 0$ such that $\omega_1(t_n)<\z(0)$. Clearly, since $\kappa$ is defined as the distance between the images of the curves $\omega_1(t)$ and $\z(t)$,  $t_{i}-t_{i+1}=\frac{\kappa}{2}>0$ for all $i=1,...,n-1$, and   since $T_m$ is finite,  we reach such a $t_n$ in a finite number of steps. We conclude by choosing a last rectangle of the form $\mathcal{R}_n=[0, t_n]\times \overline{\mathcal{U}(t_n)}$.
\end{proof}

We highlight two things. First, the choice of the sequence $\{t_i\}_{i=1}^n$ satisfying the conditions of Lemma \ref{lem:rec} is not unique. Second, we are letting points inside $\mathcal{U}$ outside the rectangles $\mathcal{R}_i$, but as long as $\z(t)$ is inside the collection, we can proceed with the analysis.

Summarizing, we have divided most of the domain $\mathcal{U}$ into a sort of pyramid made of rectangles. We shall study the function $u$ at the points inside $\mathcal{U}(T_m)$ by applying the techniques from Section~\ref{sec:Dyn_Prog} in succession on each rectangle $\mathcal{R}_i$; let us describe how.

We begin by the last rectangle $\mathcal{R}_n$. By the properties of  the initial datum, equations~\eqref{eq:conditions_U} and by Lemma~\ref{lem:trayectories_less_mu}, we can repeat the usual argument of optimal trajectories based on formula~\eqref{eq:dyn_prog} in order to see that every optimal path $\gamma$ with ending point in the upper base of $\mathcal{R}_n$ is inside said rectangle for all times, it cannot take values outside it, not even in other points of $\mathcal{U}$ outside of $\mathcal{R}_n$. This property can indeed be proved using the fact that $F(t,z)$ is increasing with respect to $z$, for all $z<\z(t)$. If a trajectory leaves the rectangle, we can hence replace the part of the trajectory that leaves the domain by a straight line. Moreover, this path $\gamma$ is unique for each point in $\mathcal{R}_n$.   From the method displayed in Section~\ref{sec:Dyn_Prog} and with the help of Lemmata~\ref{lem:zeroes_1},~\ref{lem:zeroes_2} and~\ref{lem:trayectories_less_mu} we deduce that the solution $u$ at time $t_n$ is a strictly concave and smooth enough initial datum at the beginning of the next rectangle $\mathcal{R}_{n-1}$, satisfying the conditions for the initial datum imposed in these notes. If we iterate now on each rectangle of the collection $\{\mathcal{R}_i\}$ we can reach the set $\mathcal{U}(T_m)$, and, in the process, every point inside the collection $\{\mathcal{R}_i\}$. Let us define then
\[
\mathcal{C}:=\bigcup_{i=1}^n \mathcal{R}_i.
\]

\begin{lem} \label{lem:regularity_in_the_good_set}
	 Let conditions of Theorem~\ref{thm:limit_epsilon_non_concave} hold and that $\z(0)<\mu\leq \mu_1$, $T_m<\infty$. The solution $u$ of equation~\eqref{eq:v_ham_jac} is $C^1$ in time and $C^2$ in space in the set $\mathcal{C}$. At each time $t\in[0,T_m]$, it attains its maximum value only at the point $\bar{z}(t)$, with $(t,\z(t))\in \mathcal{C}$ and  for all $(t,z)\in \mathcal{C}$, we have
	\begin{equation}\label{eq:bounds_concavity_z(t)}
		-S_c\leq\partial_{zz}u(t, z)\leq -2\lambda,
	\end{equation}
	where $\lambda$ is a strictly positive constant depending only on the second derivative of the initial datum and of the fitness term $F$, and $S_c$ comes from Proposition~\ref{prp:regularity}. As a consequence, we have that
	\begin{equation}\label{eq:derivative_z_max}
	\dot\z(t)=\frac{2g}{\left(-\partial_{zz}u(t, \z(t))\right)}(\mu-\z(t)),
	\end{equation}
	\[
	\mu-(\mu-\z(0))e^{-\frac{2g}{S_c}t}\leq \z(t)\leq \mu-(\mu-\z(0))e^{-\frac{g}{\lambda}t}.
	\]
\end{lem}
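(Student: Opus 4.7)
The plan is to iterate the machinery of Theorem~\ref{thm:section_Dyn_Prog_1} on the finite chain of rectangles $\{\mathcal{R}_i\}_{i=1}^n$ produced by Lemma~\ref{lem:rec}, proceeding forward in time from $\mathcal{R}_n$ (which contains $t=0$) up to $\mathcal{R}_1$ (which contains $T_m$). On each $\mathcal{R}_i=[t_{i+1},t_i]\times\overline{\mathcal{U}(t_i)}$ the fitness function $F(t,z)$ satisfies, by \eqref{eq:conditions_U}, analogues of \eqref{hyp1}, \eqref{as:nablaR0}, \eqref{asRD3} with $\Omega_0$ replaced by $\mathcal{U}(t_i)^\circ$ and with constants uniform in $i$. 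The initial datum on $\mathcal{R}_n$ satisfies the needed properties thanks to \eqref{eq:hypothesis_u_0} (with $z_0\in\mathcal{U}(t_n)$ by construction of $\omega_1$), and for each subsequent rectangle the ``initial'' datum $u(t_{i+1},\cdot)$ is supplied by the previous inductive step, now available on the larger set $\overline{\mathcal{U}(t_{i+1})}\supset\overline{\mathcal{U}(t_i)}$.

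\textbf{Regularity and concavity on each rectangle.} On a given rectangle I would replay Steps~1--5 of the proof of Theorem~\ref{thm:section_Dyn_Prog_1}. The only non-trivial point is to show that optimal trajectories arising in the Dynamic Programming Principle \eqref{eq:dyn_prog} with endpoint in $\overline{\mathcal{U}(t_i)}$ stay inside the rectangle for all earlier times. Hypothesis~\eqref{hyp2} is not globally available here; it is replaced by Lemma~\ref{lem:trayectories_less_mu} (trajectories do not overshoot $\mu$ to the right) combined with the straight-line truncation argument of Step~2 using the monotonicity of $F(t,\cdot)$ on $[\omega_1(t),\bar{z}(t)]$ ensured by~\eqref{eq:conditions_U}, and by Lemma~\ref{lem:zeroes_1} (which forbids them from dipping below $\omega_1$). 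Once trajectories are confined to $\overline{\mathcal{U}(t_i)}$, uniqueness of the optimal trajectory, the $C^1_t\cap C^2_z$ regularity of $u$ via Step~3, and the strict concavity bound $\partial_{zz}u\leq -2\lambda$ from Step~4 follow verbatim. The key observation for propagating $\lambda$ across rectangles is that Step~4 yields $\lambda=\min(\overline L_1,\sqrt{\overline K_1}/2)$, so that if the concavity of the ``initial'' datum already satisfies $\overline L_1\geq\lambda$, the bound is stable under iteration, yielding a single $\lambda>0$ on all of $\mathcal{C}$. The lower bound $-S_c$ on $\partial_{zz}u$ comes directly from the uniform semi-convexity estimate Proposition~\ref{prp:regularity}(iii) by passing to the limit $\varepsilon\to 0$.

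\textbf{Maximum point, ODE, and exponential bounds.} On each rectangle, strict concavity yields a unique local maximum of $u(t,\cdot)$ inside $\overline{\mathcal{U}(t_i)}$, which coincides with $\bar{z}(t)$ and is in fact the unique global maximum in $\R$ thanks to Lemmas~\ref{lem:zeroes_1} and~\ref{lem:zeroes_2} (no other zero of $u$ can occur outside $\overline{\mathcal{U}(t_i)}$). Differentiating the identity $\partial_z u(t,\bar{z}(t))=0$ in time and combining with the $z$-derivative of the PDE evaluated at $\bar{z}(t)$ yields
\[
\dot{\bar{z}}(t)=-\frac{\partial_z F(t,\bar{z}(t))}{\partial_{zz}u(t,\bar{z}(t))}.
\]
A direct computation gives $\partial_z F(t,\bar{z}(t))=-2g\bar{z}(t)+\tau H'(0)=2g(\mu-\bar{z}(t))$, producing exactly \eqref{eq:derivative_z_max}. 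Using \eqref{eq:bounds_concavity_z(t)} one obtains $\frac{2g}{S_c}(\mu-\bar{z}(t))\leq\dot{\bar{z}}(t)\leq\frac{g}{\lambda}(\mu-\bar{z}(t))$, and a Gr\"onwall-type integration gives the two exponential bounds on $\bar{z}(t)$. The main obstacle in the whole argument is the inductive propagation of the uniform constants $\lambda$ and $S_c$ across the finite chain of rectangles; as indicated above, this is resolved by the self-improving structure of the Step~4 estimate and by the fact that Proposition~\ref{prp:regularity}(iii) already provides a uniform semi-convexity bound independent of $i$.
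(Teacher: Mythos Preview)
Your proposal is correct and follows essentially the same approach as the paper: iterate Steps~1--5 of Section~\ref{sec:Dyn_Prog_1} across the finite chain of rectangles from $\mathcal{R}_n$ to $\mathcal{R}_1$, confine optimal trajectories on the right via Lemma~\ref{lem:trayectories_less_mu} and on the left via the straight-line truncation using the monotonicity of $F(t,\cdot)$ from~\eqref{eq:conditions_U}, and observe that the concavity constant $\lambda=\min(\overline L_1,\sqrt{\overline K_1}/2)$ is self-reproducing under iteration so that a single $\lambda$ works on all of~$\mathcal{C}$. Two small points: Lemma~\ref{lem:zeroes_1} concerns the zeroes of $u$, not trajectory confinement (your truncation argument already handles the left side); and the paper also explicitly checks the analogue of~\eqref{eq:hyp3} for the inductive initial datum $u(t_{i+1},\cdot)$, namely $u(t,y)\leq u(t,\mu)$ for $y\geq\mu$ and $u(t,y)\leq u(t,\omega_1(t_i))$ for $y<\omega_1(t_i)$, which is what actually feeds Step~2 on the next rectangle.
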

\begin{proof}
	The idea consists in applying the results of Section~\ref{sec:Dyn_Prog} on each rectangle $\mathcal{R}_i$ successively. We begin by the last one, $\mathcal{R}_n$, letting  $\mathcal R_n=[0,t_n]\times [z_n,\mu]$.  Notice that thanks to \eqref{eq:conditions_U}, $F$ is strictly concave in $\mathcal{R}_n$, $F(t,z)>F(t,y)$ for all   $(t,z)\in \mathcal{R}_n$ and $y<z_n$. Moreover, $F(t,\mu)>F(t,y)$, for all $t\in [0,t_n]$ and $y>\mu$,  thanks to Lemma \ref{lem:prop_F}. 
	We hence obtain that the solution $u$ is $C^1$ in time and $C^2$ in space in the set $\mathcal{R}_n$. Moreover, at time $t_n$, the function $u$ is an admissible initial datum for studying the problem in the next rectangle $\mathcal{R}_{n-1}$ and it satisfies, by 
	 Lemma \ref{lem:zeroes_2} and Step 4 of Section~\ref{sec:Dyn_Prog_1}, for all $(t,z)\in \mathcal{R}_n$,  
	\[
	 u(t,y)\leq u(t,\mu)\quad \text{for all $y\geq \mu$,}
	\]
	\[
		-S_c\leq\partial_{zz}u(t, z)\leq -2\lambda,\quad\text{where}\quad \lambda=\frac{1}{2}\min\left(C_1, \frac{\sqrt{K_1}}{\sqrt{2}}\right),
	\]
	with $C_1$ coming from Hypothesis~\eqref{eq:hypothesis_u_0} and $K_1$ coming from~\eqref{eq:conditions_U}. 	  Note that these bounds do not depend on $t$ or $\z(t)$.  Moreover, following similar arguments to Lemma \ref{lem:zeroes_2}, we can also prove that 
	\[
	u(t,y_1)\leq u(t,z_n), \quad \text{for all $y_1<z_n$.}
	\]
We can then repeat this process in the finite number of rectangles of the collection $\mathcal{C}$ up to time $T_m$. On the process, we obtain~\eqref{eq:bounds_concavity_z(t)}.
	
	To obtain the last result, we use the equivalent of equation~\eqref{can-eq} for $u$, that is~\eqref{eq:derivative_z_max}
	and we employ also the bound in~\eqref{eq:bounds_concavity_z(t)}.
\end{proof}

\begin{rem}
	Thanks to the regularity of $\z(t)$ and $\omega_1(t)$, given any point $(t,z)\in\text{int}(\mathcal{U})$, we can find a finite collection of rectangles $\{\mathcal{R}_i\}_{i=1}^n$ such that $(t,z)\in \mathcal{R}_j$ for some $j\in\{1,..,n\}$, perhaps by having to take much smaller time intervals and many more rectangles than the necessary to cover only the curve $\z(t)$. This means that we can exchange in Lemma~\ref{lem:regularity_in_the_good_set} the set $\mathcal{C}$ by the set $\text{int}(\mathcal{U})$, although it is not mandatory for the analysis of the dynamics of $u$.
\end{rem}

It is also relevant that the constant $\lambda$ in~\eqref{eq:bounds_concavity_z(t)} is always strictly positive even when the domain $\mathcal{U}(t)$ gets smaller and smaller, since the second derivative of $F$ is strictly negative in $\mathcal{U}$ by the properties~\eqref{eq:conditions_U}. 

{\bf The proof of Theorem~\ref{thm:limit_epsilon_non_concave}-(i).} Assume that $0\leq \mu\leq \mu_1$, $z_0\leq \mu$ and $\tau\leq \sqrt{2g}$. From Lemma \ref{lem:valrho} we obtain that for all $s\in [0,T_m]$, $\rho(s)=1-g\z(s)^2>1-g\mu^2>0$. We hence deduce thanks to  Lemmata~\ref{lem:T_m=infty},~\ref{lem:zeroes_1},~\ref{lem:zeroes_2},~\ref{lem:regularity_in_the_good_set} that $T_m=+\infty$. All the statements of the theorem concerning $u$ and $\z$ follows. The uniqueness of $(u,\z)$ can also be proved following similar arguments as in the proof of Theorem \ref{thm:section_Dyn_Prog_2}, applying the method successively to the rectangles $\mathcal R_n$.

{\bf The proof of Theorem~\ref{thm:limit_epsilon_non_concave}-(ii).} Assume that $0\leq \mu\leq \mu_1$, $z_0\leq \mu$ and $\tau> \sqrt{2g}$. Using  Lemmata~\ref{lem:T_m=infty},~\ref{lem:zeroes_1},~\ref{lem:zeroes_2},~\ref{lem:regularity_in_the_good_set}, either $T_m=+\infty$ or $\rho(t)=0$, for some $t\in [0,T_m]$. Notice however from \eqref{eq:z_is_Lipschitz} that, if $T_m=+\infty$ then, as $t\to +\infty$, $\z(t)\to \mu$. We next use Lemma  \ref{lem:valrho} to obtain that $\rho(t)=\max(0,1-g\z(t)^2)$. Since $1-g\mu^2<0$, we deduce that for some $t\in [0,T_m]$, $\rho(t)=0$, hence statement (ii).

\section{A special case in the monomorphic range. The case $\z(0)>\mu$}\label{sect:z_0>mu}

The situation where $\z(0)>\mu$, with $\mu\leq \mu_1$, is a special one because one has to consider two different possibilities depending on the value of $\z(0)$. Either $F(0,z)$ has one positivity set or two, and both cases can produce very different behaviours of the solution, see Figure~\ref{fig:z_0>mu}. We will refer to these cases as $F(0,z)$ being \textit{type one} or \textit{type two} respectively.

\begin{figure}[H]\centering
	\renewcommand\thesubfigure{\Alph{subfigure}}
	\subfloat[\label{A}][$\z(0)=1,77$. \textit{Type one} $F(0,z)$]{ \includegraphics[scale = 0.3]{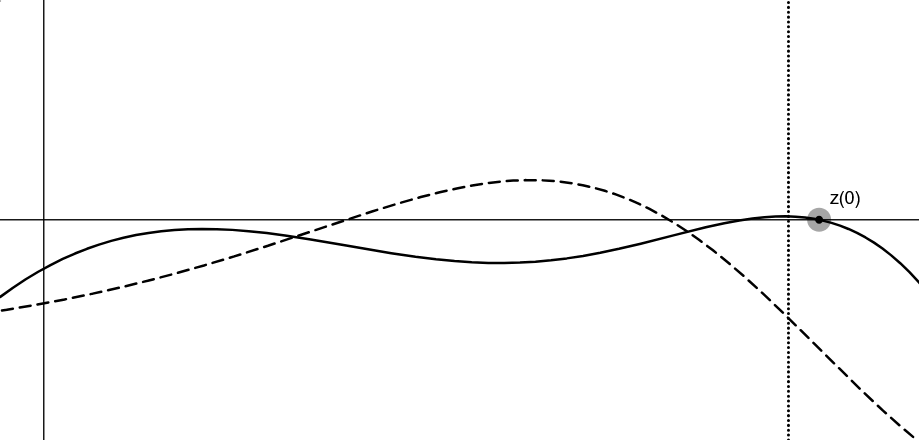}}
	\subfloat[\label{B}][$\z(0)=1,8$. \textit{Type two} $F(0,z)$]{ \includegraphics[scale = 0.3]{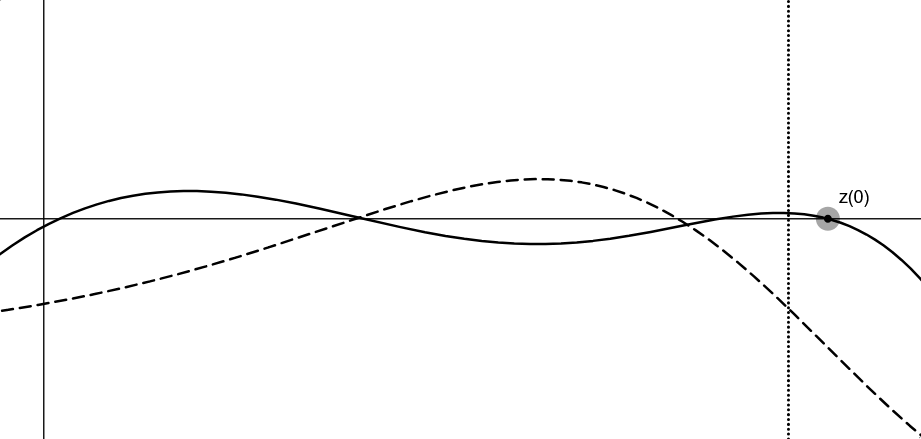}}\\[10pt]
	\caption{Two examples of $F(0,z)$. The second derivative $\partial_{zz}F(0,z)$ is in dashed-black, in order to appreciate concavity sets. As we can see, the value $\z(0)$ is to the right of the dotted vertical line representing $z=\mu$. The values of the parameters are $\tau=0,5$ and $\mu=1,7$. Note that in both cases $\mu<\z(0)<\mu_1$ and there is no remarkable difference between the functions $\partial_{zz}F(0,z)$.}
	\label{fig:z_0>mu}
\end{figure}

Repeating the arguments of Section~\ref{sect:z_0<mu} for fitness functions $F(0,z)$ of \textit{type one}, one can obtain a similar result to Theorem~\ref{thm:limit_epsilon_non_concave}, albeit this time $\z(t)$ decreases towards $\mu$. For the sake of brevity, we omit the details and just provide the result.

\begin{cor}Corollary of Sections~\ref{sect:general_considerations} and~\ref{sect:z_0<mu}. Assume~\eqref{eq:hypothesis_H},~\eqref{eq:hypothesis_u_0} and~\eqref{eq:hypothesis_rho}, and let $R(z)=1-gz^2$ and $0\leq \mu\leq \mu_1$. Assume also that $z_0>\mu$ is such that $F(0,z)$ is of type one. Then, the same conclusions of Theorem~\ref{thm:limit_epsilon_non_concave} hold.
\end{cor}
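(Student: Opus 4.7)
The plan is to mirror the analysis of Section~\ref{sect:z_0<mu} step by step, with the orientation of the curve $\z(t)$ reversed: instead of increasing toward $\mu$ from below, $\z(t)$ will decrease toward $\mu$ from above. The crucial hypothesis that $F(0,\cdot)$ is of type one is what provides the analog of Lemma~\ref{lem:F_negative} on the correct side of $\z(t)$, and the main subtlety lies in propagating this single-positivity-set property in time as $\z(t)$ sweeps through $[\mu, z_0]$.

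First, since nothing in Section~\ref{sect:general_considerations} depends on the sign of $\mu - z_0$, Lemma~\ref{lem:T_m>0} gives $T_m > 0$, the ODE~\eqref{eq:z_is_Lipschitz} holds on $[0,T_m)$, and Lemma~\ref{lem:valrho} yields the pointwise limit $\rho_\varepsilon(t) \to \rho(t) = \max(1 - g\z(t)^2, 0)$. The sign structure of~\eqref{eq:z_is_Lipschitz} now gives $\z'(t) < 0$ whenever $\z(t) > \mu$, so $\z(t)$ decreases monotonically from $z_0$ toward $\mu$; combined with Lemma~\ref{lem:F(t,mu)_greater_0}(ii) one retains $F(t,\mu) > 0$ and $F(t,\z(t)) = 0$ throughout $[0,T_m]$.

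The central technical point is the mirror of Lemma~\ref{lem:F_negative}: one must show that $F(t,z) < F(t,\z(t)) = 0$ for every $z$ lying in the complement of a single interval adjacent to $\z(t)$ from the left, for every $t \in [0, T_m]$. At $t = 0$ this is exactly the type one assumption, which states that the positivity set of $F(0,\cdot)$ is a single interval whose right endpoint is $\z(0)$. To propagate this in time, one re-runs the algebraic argument of Lemma~\ref{lem:F_negative} using $F(t,z) = -g(z^2 - \z(t)^2) + \tau H(z - \z(t))$: the transition between the type one and type two regimes is governed by the same saddle-node equation $x(1+H'(x))=2H(x)$ combined with the threshold $\mu_1$ established in~\eqref{eq:d1} and~\eqref{eq:monotone_increasing}, and the hypothesis $\mu \leq \mu_1$, together with the type one condition at $\z = z_0$, rules out any nucleation of a second positivity bubble so long as $\z(t)$ stays in $[\mu, z_0]$, which is the case on $[0,T_m]$ thanks to the monotonicity just established. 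Once this analog of Lemma~\ref{lem:F_negative} is in place, the construction of the local concavity windows $\mathcal U(t)$ containing $[\mu, \z(t)]$, the pyramid-of-rectangles covering of Lemma~\ref{lem:rec}, and the inductive rectangle-by-rectangle application of Theorem~\ref{thm:section_Dyn_Prog_1} go through essentially verbatim, delivering the $C^1_t C^2_z$ regularity of $u$, the uniform concavity bound~\eqref{eq:bounds_concavity_z(t)}, and the formula~\eqref{eq:derivative_z_max}.

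To close the argument, one observes that $z_0 \in D_R$ together with $\z(t) \in [\mu, z_0] \subset D_R$ for $t\in[0,T_m]$ (since $\mu \leq \mu_1 < 1/\sqrt{g}$) yields $\rho(t) = 1 - g\z(t)^2 > 0$ throughout; the contradiction argument of Lemma~\ref{lem:T_m=infty} then forces $T_m = +\infty$, and the uniform lower bound on $|\partial_{zz}u(t,\z(t))|$ combined with~\eqref{eq:derivative_z_max} produces the exponential convergence $\z(t) \to \mu$. The hardest part of the adaptation is precisely the time propagation of the type one structure of $F$: although the property is assumed at $t=0$, one must verify that no second positivity bubble appears while $\z(t)$ sweeps continuously through $[\mu, z_0]$. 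This is exactly why the statement needs $F(0,\cdot)$ of type one rather than merely $z_0 > \mu$, and it is where the proof departs most visibly from the $z_0 < \mu$ case treated in Section~\ref{sect:z_0<mu}.
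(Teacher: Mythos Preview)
Your proposal is correct and follows the same route as the paper, which in fact gives no proof at all beyond the sentence ``Repeating the arguments of Section~\ref{sect:z_0<mu} \ldots\ we omit the details.'' You have actually supplied more than the paper does, correctly singling out the propagation of the type-one property of $F$ as the one place where the mirror argument is not automatic. One small correction: your parenthetical justification ``since $\mu\leq\mu_1<1/\sqrt{g}$'' is not the right reason (the inequality $\mu_1<1/\sqrt{g}$ is not asserted anywhere); the correct observation is simply that $z_0>\mu$ together with $z_0\in D_R$ forces $\mu<z_0<1/\sqrt{g}$, so $[\mu,z_0]\subset D_R$ and only case~(i) of Theorem~\ref{thm:limit_epsilon_non_concave} is relevant here.
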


An open question is whether the same result holds true in the case where $F(0,z)$ is of \textit{type two}. The main obstacle for it is the fact that, if the initial datum is flat enough, then the second positivity set (the one to the left in Figure~\ref{fig:z_0>mu} (B)) can lead to the emergence of a second maximum point to the left of $\z(t)$. The solution may still be monomorphic in the sense that it has one maximum point for a.e. $t$, but it is not continuous monomorphic.

However, the positivity of the fitness term can be counter-balanced by the speed at which $\z(t)$ converges to $\mu$, since a very flat initial datum would also imply a very fast convergence, see formula~\eqref{eq:derivative_z_max}. If $\z(t)$ evolves fast enough, the left positivity set will disappear without having time to produce a new maximum point.

The study of this question requires a more detailed analysis and goes beyond the scope of this article.

\section{Beyond monomorphism. Open questions}\label{sect:beyond_monomorphism}

In the case where $R(z)=1-gz^2$, the monomorphic range $\mu\in[0,\mu_1]$ is well understood. However, when $\mu>\mu_1$ the situation is much more complex. Following the results from~\cite{Garriz-Leculier-Mirrahimi}, the natural next step in the study of this model is to focus on the so-called \textit{dimorphic range}; this is, the cases where $\mu\in(\mu_1, \mu_2)$ for a certain $\mu_2>\mu_1$. In~\cite{Garriz-Leculier-Mirrahimi} it was proven (under an extra hypothesis quite mild on $H$) that only in those cases there exists a stationary solution of~\eqref{eq:main} consisting of two points. Following this observation, let us describe, formally, the situation when $\mu>\mu_1$.

In the monomorphic regime the maximum point $\z(t)$ converges to $\mu$ and when doing so there is only one interval $I(t)$ where the fitness function $F$ is positive, and moreover $[\z(t), \mu]\subset I(t)$. In the limit, when $\z(t)=\mu$, we have that
\[
\lim\limits_{t\to\infty}\left(\max\limits_{z\in \R}F(t,z)\right)=\lim\limits_{t\to\infty}F(t,\mu)=0.
\]
At any other point, if $\mu<\mu_1$, the limit of the fitness function is negative; see Figure~\ref{fig:mu_less_mu1}.

\begin{figure}[H]
	\centering
     \includegraphics[scale = 0.75]{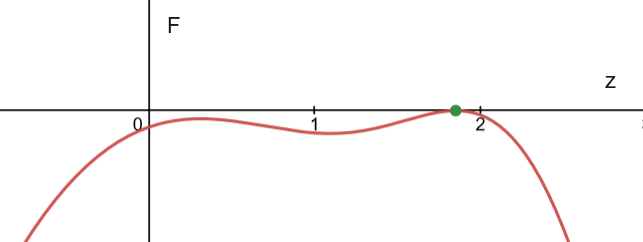}
		\caption{Fitness functions $F$ (red) with $\z(t)=\mu<\mu_1$. The point where $F=0$ is where $z=\mu$ (green).}
		\label{fig:mu_less_mu1}
\end{figure}

The limit case $\mu=\mu_1$ has the feature of presenting a second point $z_{\mu_1}$ such that
\[
\lim\limits_{t\to\infty}\left(\max\limits_{z\in \R}F(t,z)\right)=\lim\limits_{t\to\infty}F(t,\mu_1)=\lim\limits_{t\to\infty}F(t,z_{\mu_1})=0.
\]
and at any other point the limit in time of the fitness function is negative. However, it is still true that for all finite time $t$, the only positivity set of $F$ is the interval $I(t)$; see Figure~\ref{fig:mu_equal_mu1}.

\begin{figure}[H]
	\centering
     \includegraphics[scale = 0.75]{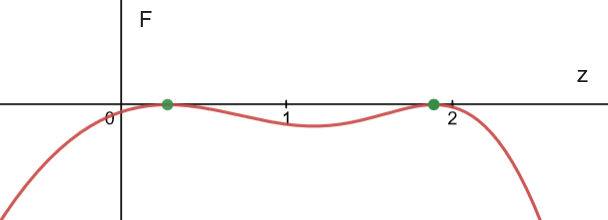}
		\caption{Fitness functions $F$ (red) with $\z(t)=\mu_1$. The points where $F=0$ are now $z=\mu_1$ and a second point $z=\mu_1-d_1$ (green), where $d_1$ comes from~\eqref{eq:d1}.}
		\label{fig:mu_equal_mu1}
\end{figure}

However, when $\mu>\mu_1$ and as $\z(t)$ converges to $\mu$, a second positivity set of $F$, call it $J(t)$, appears, and it is never empty as long as $\z(t)$ keeps moving towards $\mu$; see Figure~\ref{fig:mu_greater_mu1}. Let us define then
\[
J(t):=\{z\in (-\infty, \z(t)) : F(t,z)>0\}.
\]

\begin{figure}[H]
	\centering
     \includegraphics[scale = 0.75]{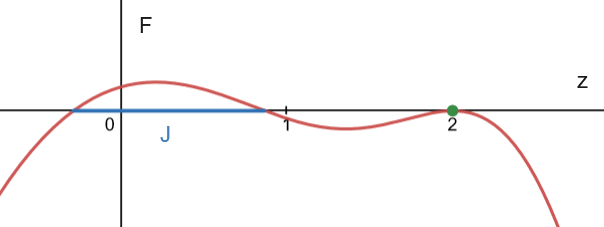}
		\caption{Fitness functions $F$ (red) with $\z(t)=\mu>\mu_1$. A set $J$ (blue) appears where $F>0$, far away from the point $z=\mu$ (green).}
		\label{fig:mu_greater_mu1}
\end{figure}

This, of course, provokes the appearance, sooner or later, of new maximum points of $u$ at a certain time $T$ inside the set $J(T)$, and when this happens several new questions arise:
\begin{enumerate}
\item Is the old maximum point $\z(t)$ still a maximum point after time $T$?
\item How many new maximum points appear? One, a discrete number of them or a whole interval?
\end{enumerate}

Next, if only one new maximum point appears, then we can imagine for instance the following possible behaviors.
\begin{enumerate}
\item The old maximum point persists and, for some time, there is dimorphism (the solution has two maximum points).
\item The old maximum point is no more a maximum point and the solution converges to a cyclic behaviour, where the new maximum point travels to the position of the old one and then the pattern repeats.
\item The new maximum travels for a while until a new maximum appears, and a sort of permanent oscillation between maximum points begin, while the solution is continuous monomorphic by intervals and converges to a stationary dimorphic solution.
\end{enumerate}

Stationary dimorphic solutions were studied in~\cite{Garriz-Leculier-Mirrahimi}, and it is known that they exist only in the range $\mu\in(\mu_1, \mu_2]$, for a certain $\mu_2>\mu_1$. Based on the simulations done for the study of this problem, it is the third option the one that seems most likely; see Figure~\ref{fig:di}.

\begin{figure}[H]\centering
        \renewcommand\thesubfigure{\Alph{subfigure}}
		\subfloat[\label{A}][Function $n_\varepsilon(t,z)$]{ \includegraphics[scale = 0.55]{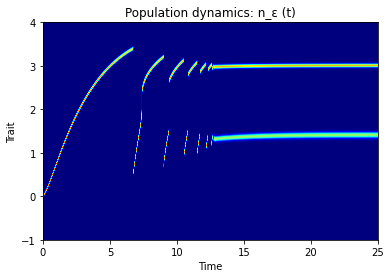}}
        \subfloat[\label{B}][Function $\rho(t)$]{ \includegraphics[scale = 0.55]{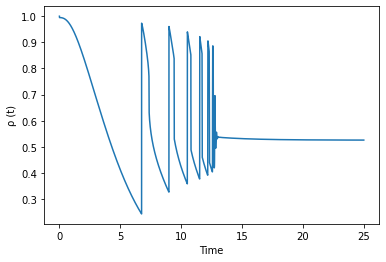}}\\[10pt]
		\caption{Solution $u_\varepsilon$ of~\eqref{eq:hopf-cole} in the dimorphic case $\mu=3.84$, with values $\tau=0.5$, $g=0.065$ and $\varepsilon=5\cdot 10^{-5}$. The oscillations between maximum points stop in a finite time due to the small parameter $\varepsilon>0$.}
        \label{fig:di}
	\end{figure}

\bigskip
\noindent \textbf{Acknowledgements}:
		The authors would  like to thank Cristobal Quininao and   Jean-Michel Roquejoffre for early discussions and computations within a project related to  Section \ref{sec:Dyn_Prog} of this article.
	This work has been supported by  the ANR project DEEV ANR-20-CE40-0011-01. The first author was supported by a post-doctoral fellowship of the ANR project DEEV ANR-20-CE40-0011-01.  The authors also thank the Chair ``Modélisation Mathématique et Biodiversité" of Veolia Environnement-Ecole Polytechnique-Museum National d’Histoire Naturelle-Fondation X and the European Union (ERC-AdG SINGER-101054787). Views and opinions expressed are however those
of the author(s) only and do not necessarily reflect those of the European Union or the European Research
Council. Neither the European Union nor the granting authority can be held responsible for them.

\end{document}